\def\ge{\geqslant}
\def\le{\leqslant}
\def\a{\alpha}
\def\b{\beta}
\def\g{\gamma}
\def\d{\delta}
\def\D{\Delta}
\def\o{\omega}
\def\s{\sigma}
\def\t{\tau}
\def\l{\lambda}
\def\i{^{-1}}
\def\<{\langle}
\def\>{\rangle}
\newcommand{\BA}{\ensuremath{\mathbb {A}}\xspace}
\newcommand{\BB}{\ensuremath{\mathbb {B}}\xspace}
\newcommand{\BF}{\ensuremath{\mathbb {F}}\xspace}
\newcommand{{\BG}}{\ensuremath{\mathbb {G}}\xspace}
\newcommand{\BH}{\ensuremath{\mathbb {H}}\xspace}
\newcommand{{\BK}}{\ensuremath{\mathbb {K}}\xspace}
\newcommand{\BM}{\ensuremath{\mathbb {M}}\xspace}
\newcommand{\BQ}{\ensuremath{\mathbb {Q}}\xspace}
\newcommand{\BT}{\ensuremath{\mathbb {T}}\xspace}
\newcommand{\BU}{\ensuremath{\mathbb {U}}\xspace}
\newcommand{\BW}{\ensuremath{\mathbb {W}}\xspace}
\newcommand{\BZ}{\ensuremath{\mathbb {Z}}\xspace}
\newcommand{\CG}{\ensuremath{\mathcal {G}}\xspace}
\newcommand{\CH}{\ensuremath{\mathcal {H}}\xspace}
\newcommand{\CO}{\ensuremath{\mathcal {O}}\xspace}
\newcommand{\CT}{\ensuremath{\mathcal {T}}\xspace}
\newcommand{\CU}{\ensuremath{\mathcal {U}}\xspace}
\newcommand{\Ad}{{\mathrm{Ad}}}
\newcommand{\GL}{\mathrm{GL}}
\DeclareMathOperator{\Hom}{Hom}
\let\Im\relax
\DeclareMathOperator{\Im}{Im}
\DeclareMathOperator{\ord}{ord}
\newcommand{\SL}{{\mathrm{SL}}}
\DeclareMathOperator{\Spec}{Spec}
\def\tx{\tilde x}
\DeclareMathOperator{\supp}{supp}
\def\brk{{\breve k}}
\def\dw{{\dot w}}
\def\pr{{\rm pr}}
\newtheorem{theorem}{Theorem}
\newtheorem{proposition}[theorem]{Proposition}
\newtheorem{lemma}[theorem]{Lemma}
\newtheorem{corollary}[theorem]{Corollary}
\theoremstyle{definition}
\newtheorem{remark}[theorem]{Remark}
\numberwithin{equation}{section}
\numberwithin{theorem}{section}
\renewcommand{\to}{%
   \ifbool{@display}{\longrightarrow}{\rightarrow}%
   }
\let\shortmapsto\mapsto
\renewcommand{\mapsto}{%
   \ifbool{@display}{\longmapsto}{\shortmapsto}%
   }
\newlength{\olen}
\newlength{\ulen}
\newlength{\xlen}
\newcommand{\xra}[2][]{%
   \ifbool{@display}%
      {\settowidth{\olen}{$\overset{#2}{\longrightarrow}$}%
       \settowidth{\ulen}{$\underset{#1}{\longrightarrow}$}%
       \settowidth{\xlen}{$\xrightarrow[#1]{#2}$}%
       \ifdimgreater{\olen}{\xlen}%
          {\underset{#1}{\overset{#2}{\longrightarrow}}}%
          {\ifdimgreater{\ulen}{\xlen}%
             {\underset{#1}{\overset{#2}{\longrightarrow}}}
             {\xrightarrow[#1]{#2}}}}%
      {\xrightarrow[#1]{#2}}
   }
\newcommand{\xyra}[2][]{%
   \settowidth{\xlen}{$\xrightarrow[#1]{#2}$}%
   \ifbool{@display}%
      {\settowidth{\olen}{$\overset{#2}{\longrightarrow}$}%
       \settowidth{\ulen}{$\underset{#1}{\longrightarrow}$}%
       \ifdimgreater{\olen}{\xlen}%
          {\mathrel{\xymatrix@M=.12ex@C=3.2ex{\ar[r]^-{#2}_-{#1} &}}}%
          {\ifdimgreater{\ulen}{\xlen}%
             {\mathrel{\xymatrix@M=.12ex@C=3.2ex{\ar[r]^-{#2}_-{#1} &}}}
             {\mathrel{\xymatrix@M=.12ex@C=\the\xlen{\ar[r]^-{#2}_-{#1} &}}}}}%
      {\mathrel{\xymatrix@M=.12ex@C=\the\xlen{\ar[r]^-{#2}_-{#1} &}}}%
   }
\newcommand{\xla}[2][]{%
   \ifbool{@display}%
      {\settowidth{\olen}{$\overset{#2}{\longleftarrow}$}%
       \settowidth{\ulen}{$\underset{#1}{\longleftarrow}$}%
       \settowidth{\xlen}{$\xleftarrow[#1]{#2}$}%
       \ifdimgreater{\olen}{\xlen}%
          {\underset{#1}{\overset{#2}{\longleftarrow}}}%
          {\ifdimgreater{\ulen}{\xlen}%
             {\underset{#1}{\overset{#2}{\longleftarrow}}}
             {\xleftarrow[#1]{#2}}}}%
      {\xleftarrow[#1]{#2}}
   }
\newcommand{\isoarrow}{%
   \ifbool{@display}{\overset{\sim}{\longrightarrow}}{\xrightarrow\sim}%
   }
\begin{document}

\title[Deep level Deligne--Lusztig representations of Coxeter type]{Deep level Deligne--Lusztig representations of Coxeter type}

\author{Alexander B. Ivanov}
\address{Fakult\"at f\"ur Mathematik, Ruhr-Universit\"at Bochum, D-44780 Bochum, Germany.}
\email{a.ivanov@rub.de}

\author{Sian Nie}
\address{Academy of Mathematics and Systems Science, Chinese Academy of Sciences, Beijing 100190, China}

\address{ School of Mathematical Sciences, University of Chinese Academy of Sciences, Chinese Academy of Sciences, Beijing 100049, China}
\email{niesian@amss.ac.cn}

\author{Panjun Tan}
\address{Academy of Mathematics and Systems Science, Chinese Academy of Sciences, Beijing 100190, China}
\email{tanpanjun@amss.ac.cn}

\begin{abstract}In this article we study the cohomology of deep level Deligne--Lusztig varieties of Coxeter type, attached to a reductive group over a local non-archimedean field, which splits over an unramified extension. This allows to construct some new irreducible representations of parahoric subgroups of $p$-adic groups. Moreover, in the quasi-split case we prove that these compactly induce to finite direct sums of irreducible supercuspidal representations of the $p$-adic group. This extends previous results of \cite{DI}, \cite{CI_loopGLn}. \end{abstract}
\maketitle

\section{Introduction}\label{sec:intro}

Let $k$ be a non-Archimedean local field with residue characteristic $p>0$, integers $\CO_k$, uniformizer $\varpi$ and residue field $\BF_q$. Let $\breve k$ be the completion of the maximal unramified extension of $k$, let $\CO_{\brk}$ denote the integers of $\breve k$. Let $F$ denote the Frobenius automorphism of $\breve k$ over $k$. 

Let $G$ be a reductive group over $k$, which splits over $\breve k$. Let $T \subseteq B$ be a maximal torus and a Borel subgroup of $G$, such that $T$ splits and $B$ becomes rational over $\breve k$. Denote by $W$ the Weyl group of $T$ in $G$. Denote by $U$ resp. $U^-$ the unipotent radicals of $B$ resp. the opposite Borel subgroup and assume that $(T,U)$ is a Coxeter pair (see \S\ref{sec:notation}). Attached to $(T,U)$ there is a $p$-adic Deligne--Lusztig space, on geometric points given by
\[X_{T,U} = \{g \in G(\breve k) \colon g^{-1}F(g) \in (U^- \cap FU)(\breve k) \}.\] 
It admits a continuous action of $G(k) \times T(k)$ given by $(g,t) \colon x \mapsto gxt$. See \cite[\S7 and \S11]{Ivanov_DL_indrep} (and \S\ref{sec:recollections_pDL} below). Let $\theta \colon T(k) \to \overline\BQ_\ell^\times$ be a smooth character. The $\theta$-isotypic component $R_T^G(\theta)$ of the homology of $X_{T,U}$ is an object in the (derived) category of smooth $G(k)$-representations, cf. \cite{IvanovM}. The goal of this article is to further investigate properties of $R_T^G(\theta)$, extending and generalizing results from \cite{DI, CI_loopGLn}.

To describe our results we need more notation. The apartment of $T$ in the reduced Bruhat--Tits building of $G$ consists of one point. Bruhat--Tits theory attaches to this point a (connected) parahoric $\CO_k$-model $\CG$ of $G$. By \cite{Ivanov_Cox_orbits,Nie_23}, $X_{T,U}$ admits a decomposition $X_{T,U} = \coprod_{\gamma \in G(k)/\CG(\CO_k)} \gamma X_{T,U}^{\CG}$, where 
\[ X_{T,U}^{\CG} = \{g \in \CG(\CO_{\breve k}) \colon g^{-1}F(g) \in (\CU^- \cap F\CU)(\CO_{\breve k})\}\] 
is an affine $\overline \BF_q$-scheme (here $\CU$ denotes the closure of $U$ in $\CG$). 

Fix some $r\leq \infty$. We can regard $\CG(\CO_{\brk}/\varpi^r) = \BG(\overline \BF_q)$ as the geometric points of a perfect $\BF_q$-scheme $\BG = \BG_r$. This is done via the (truncated, if $r<\infty$) positive loop functor, see e.g. \cite[\S1.1]{Zhu_17} (or \cite[\S2]{DI}) for details. For a subscheme $H \subseteq G$, we denote by $\CH$ its closure in $\CG$ and by $\BH \subseteq \BG$ the corresponding subscheme of $\BG$. We denote by $F$ the geometric Frobenius of $\BG$, so that $\BG^F = \BG(\BF_q)$. Then $X_{T,U}^{\CG}$ is isomorphic to the inverse limit over $r$ of its truncations in each $\BG_r$. Each of these truncations is a perfectly smooth perfect $\overline\BF_q$-scheme, and up to an $\BA^n$-bundle (not affecting the cohomology), it equals
\begin{equation}\label{eq:X}
X = X^{\CG}_{T,U,r} = \{x \in \BG \colon x^{-1}F(x) \in F\BU \},
\end{equation}
Note that $X$ is equipped with the action of the finite group $\BG^F \times \BT^F$ given by $(g,t) \colon x \mapsto gxt$.

By these geometric considerations ($+\varepsilon$), $R_T^G(\theta)$ admits the following more explicit description (which might, for the purposes of this article, also be considered as a definition). Let $Z \subseteq G$ denote the center of $G$. For any $\BT^F$-module $M$, let $M[\chi]$ denote the $\chi$-isotypic subspace. Then, if $\theta|_{\CT(\CO_k)}$ factors through a character $\chi$ of $\BT^F$, then  
\[ 
R_T^G(\theta) = {\rm cInd}_{\CG(\CO_k)Z(k)}^{G(k)} H_c^\ast(X)[\chi], 
\]
where $H_c^\ast(X)[\theta] = \sum_{i\in \BZ} H_c^i(X,\overline\BQ_\ell)[\theta]$ is the $\ell$-adic equivariant Euler characteristic of $X$ (regarded as a virtual $\BG^F$-module), inflated to a $\CG(\CO_k)$-representation, and extended to $\CG(\CO_k)Z(k)$ in the unique way such that $Z(k)$ acts by $\theta|_{Z(k)}$. Our first main result concerns the representations in the cohomology of $X$. 

%We fix now some $r<\infty$ and write $\BG = \BG_r$. If $\CG$ is defined over $\CO_k$, then $\BG$ is defined over $\BF_q$; in this case  Moreover, if $H \subseteq G$ is a subscheme, then we denote by $\CH$ its closure in $\CG$ and by $\BH \subseteq \BG$ the corresponding subscheme of $\BG$. Consider the closed subscheme
%\begin{equation}\label{eq:X}
%X = X^{\CG}_{T,U,r} = \{x \in \BG \colon x^{-1}F(x) \in F\BU \}
%\end{equation}
%of $\BG$. The group $\BG^F \times \BT^F$ acts on it by $(g,t) \colon x \mapsto gxt$. Let $\ell \neq p$ be a prime. For any smooth character $\chi \colon \BT^F \rightarrow \overline\BQ_{\ell}^\times$ we have the virtual $\BG^F$-module
%\[
%H_c^\ast(X)[\chi] = \sum_{i \in \BZ} (-1)^i H^i(X_{T,U}, \overline\BQ_{\ell}^\times)[\chi],
%\]
%where for any $\BT^F$-module $M$, $M[\chi]$ denotes the $\chi$-isotypic subspace. The following is our main result. 

\begin{theorem}\label{thm:main}
Suppose that $q$ satisfies condition \eqref{eq:M} (this is always true when $q>5$). Then there exists a Coxeter pair $(T,U)$ such that
% Suppose that condition \eqref{eq:ast} from \S\ref{set-up} holds for $(T,U)$. Then 
\[ \dim_{\overline\BQ_{\ell}} \Hom_{\BG^F}( H_c^\ast(X)[\chi], H_c^\ast(X)[\chi']) = \sharp\left\{w \in W_e^F; w(\chi) = \chi'\right\}\] 
for any two smooth characters $\chi, \chi'$ of $\BT^F$, where $W_e$ denotes the Weyl group of the special fiber of $\CT$ in the reductive quotient of the special fiber of $\CG$.
\end{theorem}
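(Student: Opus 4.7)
The plan is to adapt to the non-quasi-split setting the Deligne--Lusztig orthogonality strategy at deep level, developed in \cite{DI} for the quasi-split case. The starting point is Frobenius reciprocity together with the K\"unneth formula, which rewrite
\[
\dim_{\overline\BQ_\ell} \Hom_{\BG^F}\bigl(H_c^\ast(X)[\chi], H_c^\ast(X)[\chi']\bigr) = \dim H_c^\ast\bigl(\BG^F \backslash (X \times X)\bigr)[\chi^{-1} \boxtimes \chi'].
\]
The diagonal quotient $\BG^F \backslash (X \times X)$ maps to $\BG$ via $(x,y) \mapsto x^{-1}y$, and its fiber over $g$ consists of classes of pairs $(x, xg)$ subject to $x^{-1}F(x) \in F\BU$ and $g^{-1}x^{-1}F(x)F(g) \in F\BU$. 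This is the deep level analogue of the Deligne--Lusztig double variety, and it is the object I would stratify.

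The second step is to stratify the image of $(x,y) \mapsto x^{-1}y$ using the Bruhat decomposition of $\BG$ at level $r$. In the Coxeter setting, the relevant strata are controlled by a finite subset of the (extended) affine Weyl group that intertwines $F\BU$ with itself, and each stratum can be identified with an iterated fibration of affine spaces and subtori over a base related to a subgroup of $\BT$. The residual action of $\BT^F \times \BT^F$ on each fibration is then analyzed to extract the $\chi^{-1} \boxtimes \chi'$-isotypic piece of its cohomology. The bookkeeping of fibrations, already present in \cite{DI}, needs to be carried out with the non-split Frobenius acting nontrivially on the indexing set of the stratification.

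The main dichotomy is then: (i) the strata corresponding to elements $w \in W_e^F$ each contribute a one-dimensional $\chi^{-1} \boxtimes w\chi$-isotypic piece, yielding the contribution $1$ precisely when $w\chi = \chi'$; and (ii) all other strata contribute zero to the $\chi^{-1} \boxtimes \chi'$-isotypic part. Part (i) is an $F$-equivariant version of the classical Deligne--Lusztig calculation, where the Lang torsor for $\BT$ produces the expected character. Part (ii) is the main obstacle: for each unwanted stratum one must exhibit a nontrivial residual torus whose action via a character does not match $\chi^{-1} \boxtimes \chi'$, forcing vanishing of the isotypic component. Condition \eqref{eq:M} is used precisely here, to rule out coincidental fixed points in the character lattice under small-order Weyl elements.

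The key new ingredient relative to \cite{DI} is the construction of a Coxeter pair $(T,U)$ in the non-quasi-split setting, where $F$ must act as a twisted Coxeter element on the absolute Weyl group rather than as an honest Coxeter element. Existence of such a pair follows from the fact that $G$ splits over $\brk$, together with Kottwitz-type classification of unramified inner forms. Once the pair is fixed, the root-group analysis from the quasi-split case can be adapted by systematically tracking the Frobenius permutation on each root group and its Moy--Prasad pieces; this is where I expect the bulk of the technical work to lie, since some strata in (ii) that were vacuous in the quasi-split case now need to be excluded by a genuine cohomological argument.
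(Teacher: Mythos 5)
Your overall framework is the same as the paper's: identify $\BG^F\backslash(X\times X)$ with $\Sigma^0=\{(x,x',y)\in \BU\times F\BU\times \BG:\ xF(y)=yx'\}$, stratify it by Weyl-group elements, show that only the strata indexed by $F$-fixed elements contribute (each giving $H_\ast(\dot w\BT^F)$, hence the count $\sharp\{w:w(\chi)=\chi'\}$), and kill the remaining strata by exhibiting a one-parameter group acting nontrivially on the relevant root groups, with condition \eqref{eq:M} guaranteeing nontriviality. However, there is a genuine gap at the heart of the argument. The stratification is indexed not by a vaguely specified ``finite subset of the extended affine Weyl group'' but precisely by $W_e$, the Weyl group of the reductive quotient of the special fiber of the parahoric, via $\Sigma^i_w=\pr_3^{-1}(\BU w\BT\BG^1F^i\BU)$; and every step of the vanishing argument and of the Deligne--Lusztig--style induction over the auxiliary varieties $\Sigma^i$ depends on Theorem \ref{Coxeter}: the $\sigma$-Coxeter element $c$ of $W$ factors as $c\sigma=c_I\sigma_{I,c}$ with $c_I$ a $\sigma_{I,c}$-Coxeter element of $W_e$, and $\sigma_{I,c}$ acts faithfully on $\Delta_e$ in the sense of part (3). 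This is the main new input beyond \cite{DI}; it is proved in \S\ref{sec:proof_Coxeter} by a case-by-case analysis over Dynkin types, and nothing in your proposal produces it or a substitute. Without it, Lemma \ref{unstable} and Corollary \ref{proper} (which place the bad strata inside $w_eW_K\subseteq w_0W_J$ so that a coweight $\o_\CO^\vee$ centralizes the relevant unipotent group and feeds Lemma \ref{non-zero}) and Lemma \ref{nonempty} (which drives the chain $H_\ast(\Sigma^i_w)=H_\ast(\Sigma^{i+1}_w)$ down to the directly computable case of Lemma \ref{indetity-case}) cannot be run.

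A second, smaller error: your explanation of the clause ``there exists a Coxeter pair'' is wrong. The existence of an elliptic unramified torus with $c$ a $\sigma$-Coxeter element is part of the standing hypotheses, and Kottwitz-type classification of inner forms plays no role. What the proof actually requires is a Coxeter element satisfying condition \eqref{eq:ast}, namely $(c\sigma)^N=w_0\sigma^N$ with $N\ell(c)=\ell(w_0)$ (available for connected $\Delta$ by Bourbaki, cf.\ Remark \ref{rem:bipartite_Coxeterelements}), which is then transported to $(c_I,\sigma_I,w_e)$ by Lemma \ref{ast}; the independence of the final $G(k)$-representation from the choice of $U$ is justified separately by the equivariant isomorphisms between the $p$-adic Deligne--Lusztig spaces for varying Coxeter $U$. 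Your part (i) and the use of \eqref{eq:M} in part (ii) are correctly placed in spirit, but as written the proposal is an outline of the \cite{DI} strategy rather than a proof of the new case.
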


In particular, if $\{w \in W_e^F \colon w(\chi) = \chi\} = \{1\}$, then $H_c^\ast(X)[\chi]$ is up to sign an irreducible $\BG^F$-representation. Note that Theorem \ref{thm:main} generalizes \cite[Theorem 3.2.3]{DI} and \cite[Theorem 4.1]{CI_loopGLn}. 

\begin{remark}
    Recently, under a mild condition on $p$, Chan \cite{Chan24} shows by a different approach that the inner product formula holds in a much more general case, which in particular includes the case that $T$ is elliptic. 
\end{remark}

Our second main result concerns the cuspidality of the compactly induced $G(k)$-representation $R_T^G(\theta)$. It generalizes \cite[Theorem 6.1]{CI_loopGLn}.

\begin{theorem}\label{thm:cusp}
Assume that $G$ is unramified and that $q$ satisfies condition \eqref{eq:M}. Let $\theta \colon T(k) \to \overline\BQ_\ell^\times$ be smooth with trivial stabilizer in $W^F$. Then $R_T^G(\theta)$ is up to sign a finite direct sum of irreducible supercuspidal representations of $G(k)$.
\end{theorem}

Some comments on our results are in order. First, we explain why ``it suffices'' to establish Theorem \ref{thm:main} for a single Coxeter pair $(T,U)$. Ultimately, we are interested in the smooth $G(k)$-representation $R_T^G(\theta)$. By \cite[Corollary 7.25, Lemma 11.3]{Ivanov_DL_indrep}, $X_{T,U}$ are mutually $G(k) \times T(k)$-equivariantly isomorphic, when $(T,U)$ varies through all Coxeter pairs $(T,U)$ with a fixed $T$.\footnote{This is not clear for the schemes $X_{T,U,r}$ at least if $G$ is not quasi-split (for the quasi-split case, see  \cite[Corollary 4.1.4]{DI}).} Thus, $R_T^G(\theta)$ is independent of the choice of $U$. So, it suffices to know the statement of Theorem \ref{thm:main} for at least one Coxeter pair. In fact, our proof shows that for many groups $G$ Theorem \ref{thm:main} holds for all pairs $(T,U)$, see Remark \ref{rem:bipartite_Coxeterelements}.

\smallskip

Next, we explain why the condition on $q$ in Theorems \ref{thm:main} and \ref{thm:cusp} is very mild, so that the theorems even gives rise to new supercuspidal representations of $G(k)$. Recall that by the work of Yu and Kaletha \cite{Yu_01,Kaletha_19}, one can attach a supercuspidal irreducible $G(k)$-representation $\pi_{(S,\theta)}$ to any regular elliptic pair $(S,\theta)$ consisting of a maximal elliptic torus $S \subseteq G$ and a sufficiently nice smooth character $\theta \colon S(k) \rightarrow \overline\BQ_\ell^\times$. A crucial point for this to work is the existence of a Howe factorization of $\theta$, cf. \cite[\S3.6]{Kaletha_19}. However, not all characters admit a Howe factorization, when the residue characteristic $p$ is small and $G$ is not an inner form of $\GL_n$. 

For instance, if $p \in \{2,3,5\}$, there exist many examples of pairs $(T,\theta)$ with $T$ unramified Coxeter (hence covered by Theorem \ref{thm:main} when $q$ satisfied condition \eqref{eq:M} -- in particular, whenever $q>5$) such that ${\rm Stab}_{W_e^F}(\theta) = \{1\}$, but $\theta$ does not admit a Howe factorization. For examples of $(T,\theta)$ not admitting a Howe factorization we refer to the forthcoming work of Fintzen--Schwein \cite{FintzenSchwein}, where an algebraic approach to the extension of Yu's construction is pursued. As mentioned in \cite{ChanO_21}, since ${\rm Stab}_{W_e^F}(\theta) = \{1\}$ one should expect an irreducible supercuspidal $G(k)$-representation attached to $(T,\theta)$, but Yu's construction does not apply as there is no Howe factorization. The point is now that our cohomological construction does not require any condition on $p$, but only a mild one on $q$. In particular, there are many examples of $k,G,T,\theta$ such that $\pm H^\ast_c(X)[\theta|_{\CT(\CO_k)}]$ is an irreducible $\CG(\CO_k)$-representation, which does not appear in Yu's construction. Moreover, Theorem \ref{thm:cusp} implies that its induction to $G(k)$ is supercuspidal.
 
%However, let us also note that our method is not yet complete, in the sense that it does not prove that the resulting $G(k)$-representation $R_{T,U}(\theta)$ is irreducible supercuspidal. So far there is no purely Deligne--Lusztig theoretic proof of this fact; the closest purely Deligne--Lusztig theoretic result states (in the case of inner forms of $\GL_n$) that $R_{T,U}(\theta)$ is admissible and hence a direct sum of finitely many irreducible supercuspidal representations, see \cite[Theorem 6.1]{CI_loopGLn}.

\subsection*{Acknowledgements}
The first author is grateful to Jessica Fintzen and David Schwein for explaining their results on characters without Howe decomposition.
The first author gratefully acknowledges the support of the German Research Foundation (DFG) via the Heisenberg program (grant nr. 462505253).

\section{Preparations}

\subsection{More notation}\label{sec:notation}

We use the notation from the introduction. Moreover, we denote by $N_G(T)$ the normalizer of $T_{\breve k}$ in $G_{\breve k}$, so that $W = N_G(T)/T$ is the Weyl group of $T$, by $X^\ast(T)$ (resp. $X_\ast(T)$) the group of characters (resp. cocharacters) of $T_{\breve k}$ and by $\langle\cdot,\cdot \rangle \colon X^\ast(T) \times X_\ast(T) \to \BZ$ the natural pairing. We write $\Phi$ for the root system of $T_{\breve k}$ in $G_{\breve k}$, $\Phi^+$ for the subset of positive roots determined by $B$, and $\Delta \subseteq \Phi^+$ for the subset of positive simple roots. We write $S \subseteq W$ for the corresponding set of simple reflections.

Let $c \in W$ be the unique element such that $FB = {}^c B$. Then for any lift $\dot c$ of $c$, $\Ad({\dot c})\i \circ F: G(\breve k) \to G(\breve k)$ fixes the pinning $(T,B)$, hence defines automorphisms, denoted by $\sigma$, of the based root system $\Delta \subseteq \Phi$ and of the Coxeter system $(W,S)$. Note that $\sigma$ does not depend on the choice of the lift $\dot c$. We call $(T,B)$ (or $(T,U)$) a \emph{Coxeter pair} if $c$ is a $\sigma$-Coxeter element in the Coxeter triple $(W,S,\sigma)$, that is, if a(ny) reduced expression of $c$ contains precisely one element from each $\sigma$-orbit on $S$. \emph{Moreover, we assume until the end of \S\ref{sec:proof_Coxeter} that $c$ is $\sigma$-Coxeter, and hence $(T,U)$ is a Coxeter pair.}
% More generally, $(T, U)$ is called a \emph{minimal elliptic pair} if $c$ is of minimal length in its $\s_W$-twisted conjugacy class. We have implications $(T,B)$ Coxeter $\Rightarrow$ $(T,B)$ minimal elliptic $\Rightarrow$ $T$ is elliptic.

% We let $\breve k/k, G, \CG, \BG$ and other notations be as in \S\ref{sec:intro}.
Except for $G$, $\CG$ and their subgroups (which are defined over $k,\breve k$ resp. $\CO_k,\CO_{\breve k}$), all schemes appearing below are perfect schemes perfectly of finite presentation and perfectly smooth over $\overline\BF_q$. For a review of perfect geometry we refer to \cite[Appendix A]{Zhu_17}. We freely make use of the $6$-functor formalism of \'etale cohomology for such schemes with $\overline\BQ_\ell$-coefficients. Moreover, we fix a prime number $\ell \neq p$, and for a perfect $\overline\BF_q$-scheme we denote by $H^\ast(Y) = H_c^\ast(Y, \overline\BQ_\ell)$ its $\ell$-adic \'etale cohomology with compact support. 

% If $x,y \in H$ for some group $H$, then we write ${}^x y = xyx^{-1}$ and $y^x = x^{-1}yx$.

\subsection{Pinning}\label{sec:pinning}
% After embedding the group $G$ into a larger group $\widetilde G$ with connected center and $G_{\rm der} = \widetilde G_{\rm der}$, 
We may express the action of the Frobenius $F$ on $X_\ast(T)_{\BQ}$ as $F = \mu c \sigma \colon x \mapsto \mu + c\sigma(x)$ for some $\mu \in X_\ast(T)$. There is a unique point $e \in \BQ\Phi^\vee$ such that $F(e) \in e + X_\ast(Z)_\BQ$, or equivalently, $\mu + c\sigma(e) - e \in X_\ast(Z)_\BQ$. Let 
\[
\Phi_e = \{\a \in \Phi; \<\a, e\> \in \BZ\}.
\] 
We denote by $\D_e$ the set of simple roots of $\Phi_e^+ = \Phi_e \cap \Phi^+$. Let $W_e \subseteq W$ be the Weyl group of $\Phi_e$. Note that $\CG$ from the introduction is the parahoric model attached to the image of $e$ in the reduced building of $G$, and that $\Phi_e$ (resp. $W_e$) is the root system (resp. Weyl group) of the reductive quotient of the special fiber of $\CG$. 

Also, note that the action of $F$ on $W$ agrees with ${\rm Ad}(c) \circ \sigma$; we denote it by $F = c\sigma \colon W \to W$. This action stabilizes $W_e \subseteq W$. Finally, for an element $w \in W_e$ we denote by $\dot w \in \BG(\overline\BF_q)$ an arbitrary (fixed) lift of $w$.

\subsection{A condition on $q$}
Let $\o_\a^\vee$ denotes the fundamental coweight of $\a \in \D$. For a $\s$-orbit $\CO \subseteq \D$ of simple roots, we set $\o_\CO^\vee = \sum_{\a \in \CO} \o_\a^\vee$, where $\o_\a^\vee$ denotes the fundamental coweight of $\a \in \D$. 
%Let 
%\begin{equation}\label{eq:M}
%M = \max\{\<\g, \o_\a^\vee\>; \g \in \Phi^+, \a \in \D\}. 
%\end{equation}
We prove our main result under the following condition on $q$:
\begin{equation}\label{eq:M}
q > M = \max\{\<\g, \o_\CO^\vee\>; \g \in \Phi^+, \CO \in \D / \<\s\>\}.
\end{equation}
Note that $M$ only depends on the (relative) Dynkin diagram $\Delta$ of the quasi-split inner form of $G$ over $k$. If $\Delta$ is connected then $M$ takes the following values: $M=1$ for type $A_n$; $M=2$ for types $B_n, C_n, D_n, {}^2A_n, {}^2D_n$; $M=3$ for types $G_2, E_6, {}^3D_4$; $M=4$ for types $F_4, E_7, {}^2E_6$; $M=6$ for type $E_8$. If the quasi-split inner form of $G$ is split, then $M$ is the same as in \cite[\S2.7]{DI}, and it differs otherwise. Just as in \cite[\S2.7]{DI}, for arbitrary $G$ the constant $M$ equals the maximum of the values of $M$ over all connected components of the Dynkin diagram of $G_{\breve k}$ (equipped with the smallest power of $\sigma$ fixing the connected component). In particular, \eqref{eq:M} holds whenever $q>5$.

\subsection{A Coxeter element in $W_e$}\label{subsec: sequence} It turns out that $c$ determines a (twisted) Coxeter element of $W_e$. Write $c = s_{\a_1} \cdots s_{\a_r}$, where $\{\a_1, \dots, \a_r\} \subseteq \D$ is a set of representatives of $\s$-orbits of $\D$.

Let $I = (i_1 < i_2 < \cdots < i_m)$ be a subsequence of $[r]:= (1 < 2 < \cdots < r)$, and let $I' = (j_1 < j_r < \cdots < j_{r-m})$ be the complement sequence of $I$ in $[r]$. We define \begin{align*} \s_{I, c} &= s_{\a_{i_1}} s_{\a_{i_2}} \cdots s_{\a_{i_m}} \s; \\ c_I &= s_{\b_{j_1}} s_{\b_{j_2}} \cdots s_{\b_{j_{r-m}}}; \\ \D_{I, c} &= \{\b_{j_l}; 1 \le l \le r-m\} \end{align*} where $\b_{j_l} = s_{\a_{i_1}} s_{\a_{i_2}} \cdots s_{\a_{i_t}}(\a_{j_l})$ with $1 \le t \le m-1$ such that $i_t < j_l < i_{t+1}$. By definition, $c \s = c_I \s_{I, c}$.

% Let $\mu \in X_*(\bT)$. Then there exists a unique point $e = e_{\mu, c} \in \BQ \Phi^\vee$ such that $\mu + c\s(e) - e \in X_*(\bZ)_\BQ$. Here $\bZ$ denotes the center of $\bG$. Let \[\Phi_e = \{\a \in \Phi; \<\a, e\> \in \BZ\}.\] We denote by $\D_e$ the set of simple roots of $\Phi_e^+ = \Phi_e \cap \Phi^+$. Let $W_e \subseteq W$ be the Weyl group of $\Phi_e$.

\begin{theorem} \label{Coxeter}
    Let $c$, $\mu$ and $e = e_{\mu, c}$ be as in \S\ref{sec:pinning}. Then there exist a sequence $I = I_{\mu, c}$ of $1 < 2 < \cdots < r$ such that 
    
    (1) $\s_{I, c}(\D_e) = \D_e$; 
    
    (2) $\D_{I, c} \subseteq \D_e$ is a representative set of $\s_{I, c}$-orbits of $\D_e$; 
    
    (3) $\s_{I, c}^i = 1$ if and only if $\s_{I, c}^i$ fixes each root of $\D_e$. 

    In particular, $c_I$ is a $\s_{I, c}$-Coxeter element of $W_e$.
\end{theorem}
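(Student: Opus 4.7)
The plan is to construct the required sequence $I$ by a combinatorial left-to-right sweep through the factorization $c = s_{\alpha_1} \cdots s_{\alpha_r}$, and then to verify the three listed properties; the final ``Coxeter'' conclusion follows formally from them.

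At step $k \in [r]$, having assembled the partial product $w_{k-1}$ of those $s_{\alpha_i}$ with $i \in I$ and $i < k$ (in increasing order), form the conjugate root $\beta_k := w_{k-1}(\alpha_k)$. Place $k$ in the complement of $I$ (making $k$ one of the $j_l$'s) when $\beta_k \in \Delta_e$, and place $k \in I$ otherwise. This definition immediately forces $\Delta_{I,c} \subseteq \Delta_e$ and is consistent with the identity $c\sigma = c_I \sigma_{I,c}$ noted in \S\ref{subsec: sequence}.

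For property (1), the starting observation is that $\Phi_e$ is $F$-stable: since $F(e) - e \in X_\ast(Z)_\BQ$ pairs trivially with every root, $\langle F\alpha, e\rangle = \langle \alpha, e\rangle$, so $F$ preserves $\Phi_e$. Writing $F = c_I \sigma_{I,c}$ with $c_I \in W_e$, the factor $\sigma_{I,c}$ also preserves $\Phi_e$, and one must upgrade this to preservation of $\Delta_e$. This is where I expect the main obstacle to lie, and where the bound \eqref{eq:M} enters: the height estimate $\langle \gamma, \omega_\CO^\vee \rangle \le M < q$ controls the pairings $\langle \gamma, e \rangle$ across each $\sigma$-orbit $\CO$ enough that the extra reflections $s_{\alpha_{i_k}}$ with $\alpha_{i_k} \notin \Delta_e$ cannot push elements of $\Delta_e$ out of $\Phi_e^+$. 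A careful case analysis following the sweep, tracking how $w_{k-1}$ transforms positive roots relative to $\Phi_e$, should suffice.

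For property (2), since $c_I \in W_e$ contributes only an inner twist, the $\sigma_{I,c}$-orbit structure on $\Delta_e$ matches the $\langle F\rangle$-orbit structure on $\Delta_e$. The sweep deposits exactly one representative per orbit into $\Delta_{I,c}$, so the cardinality $|\Delta_{I,c}| = r - m$ matches the number of orbits and $\Delta_{I,c}$ is a complete set of representatives. Property (3) -- the nontrivial direction being that $\sigma_{I,c}^i$ fixing each root of $\Delta_e$ implies $\sigma_{I,c}^i = 1$ -- should follow from the explicit factorization $\sigma_{I,c} = s_{\alpha_{i_1}} \cdots s_{\alpha_{i_m}} \sigma$ together with the structural fact that the $\alpha_{i_k}$ belong to a transversal of $\sigma$-orbits on $\Delta$, rigidifying $\sigma_{I,c}$ enough that its restriction to $\Delta_e$ determines it. Combining (1)--(3), $c_I = s_{\beta_{j_1}} \cdots s_{\beta_{j_{r-m}}}$ is a product of simple reflections of $\Phi_e$ indexed by a full set of $\sigma_{I,c}$-orbit representatives on $\Delta_e$, i.e., a $\sigma_{I,c}$-Coxeter element of $W_e$.
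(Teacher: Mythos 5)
Your proposal takes a genuinely different route from the paper, but it is not a proof: each of the three properties is left at the level of ``should suffice'' or ``should follow'', and these are precisely the hard steps. The paper does not attempt a uniform construction at all. It first proves Lemma \ref{lemma1.2}, which shows the statement is invariant under the elementary twisted conjugations $c \mapsto s_\alpha c \sigma(s_\alpha)$ (with the corresponding change $\mu \mapsto s_\alpha(\mu)$), thereby reducing to a single, explicitly chosen $\sigma$-Coxeter element; it then runs a case-by-case analysis over the Dynkin types ($A_{n-1}$, ${}^2A_{n-1}$, $B_n$, $C_n$, $D_n$, $E_6$, $E_7$, the remaining types being excluded because $X_*(T)/(1-c)X_*(T)=0$ forces $\Delta_e = \Delta$), writing down $I$ explicitly for each class of $\mu$ in $P/(1-c\sigma)P$. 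Your greedy sweep, if it could be shown to work, would be a cleaner and type-free argument, but nothing in the proposal verifies that it does.

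Concretely: for (1), your observation that $c\sigma$ preserves $\Phi_e$ is correct and easy, and since your construction forces $\Delta_{I,c} \subseteq \Delta_e$, hence $c_I \in W_e$, it follows that $\sigma_{I,c}$ preserves $\Phi_e$. But the actual content of (1) is that $\sigma_{I,c}$ preserves $\Phi_e^+$ (equivalently $\Delta_e$), i.e.\ that $c_I$ is exactly the $W_e$-component in the decomposition of $c\sigma$ with respect to $W_e \rtimes \mathrm{Stab}(\Delta_e)$; this is exactly what you defer to ``a careful case analysis''. Worse, the mechanism you propose for closing this gap --- the bound \eqref{eq:M} --- is not available: Theorem \ref{Coxeter} is a purely root-theoretic statement with no hypothesis on $q$, and the paper proves it unconditionally; \eqref{eq:M} enters only later, in Lemma \ref{non-zero}, to guarantee nontriviality of certain $\BG_m$-actions on the varieties $\hat\Sigma^i_w$. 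For (2), the assertion that the sweep deposits exactly one representative into each $\sigma_{I,c}$-orbit of $\Delta_e$ is unsubstantiated --- the orbit structure depends on the very $\sigma_{I,c}$ being built, and ``only an inner twist'' does not identify its orbits on $\Delta_e$ with those of $\langle F\rangle$. For (3), which is a genuinely delicate point (it fails, for instance, if one takes the wrong $I$), no argument is given. So the proposal should be regarded as a plausible alternative strategy with its three essential verifications missing, not as a proof.
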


This theorem is proven in \S\ref{sec:proof_Coxeter}.

\subsection{Support} For $\a \in \Phi$ we denote by $\supp(\a) \subseteq \D$ the minimal subset whose linear span contains $\a$. For a subset $C \subseteq \Phi$ we set $\supp(C) = \cup_{\a \in C} \supp(\a)$. For $w \in W$
we denote by $\supp(w)$ the set of simple reflections which appear in some/any reduced expression of $w$.
\begin{lemma} \label{stable}
    Let $C \subseteq \Phi$ be a $c\s$-orbit. Then $\supp(C)$ is $\s$-stable.
\end{lemma}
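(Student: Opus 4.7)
The plan is to exploit the specific structure of the $\s$-Coxeter element $c = s_{\a_1}\cdots s_{\a_r}$, where $\{\a_1, \ldots, \a_r\} \subseteq \D$ is a fixed set of representatives for the $\s$-orbits on $\D$. For $\g$ in the root lattice expand $\g = \sum_{\a \in \D} n_\a(\g) \a$ in the basis of simple roots.

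The first step is the observation that for every $\a \in \D \setminus \{\a_1, \ldots, \a_r\}$ one has $n_\a(c\g) = n_\a(\g)$. Indeed, applying the reflections making up $c$ one at a time, the step $s_{\a_i}(\b) = \b - \<\b, \a_i^\vee\>\a_i$ only alters the coefficient of $\a_i$; since $\a$ is not among the $\a_i$, its coefficient is left invariant throughout. Combined with the tautological identity $n_\a(\s\g) = n_{\s\i\a}(\g)$, this yields, for $\a \notin \{\a_1, \ldots, \a_r\}$:
\[
n_\a(c\s\g) = n_{\s\i\a}(\g).
\]

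The second step is to iterate this identity along a fixed $\s$-orbit. Let $\CO \subseteq \D$ be a $\s$-orbit with transversal representative $\a_\CO \in \{\a_1, \ldots, \a_r\}$, write $d = |\CO|$, and fix $1 \le i \le d-1$. Each of the simple roots $\s^i\a_\CO, \s^{i-1}\a_\CO, \ldots, \s\a_\CO$ lies in $\CO \setminus \{\a_\CO\}$ and so is outside the transversal, which legitimizes $i$ applications of the previous identity, giving
\[
n_{\s^i \a_\CO}\bigl((c\s)^k\g\bigr) = n_{\a_\CO}\bigl((c\s)^{k-i}\g\bigr) \qquad \text{for every } k \in \BZ.
\]

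Finally, I deduce the $\s$-stability of $\supp(C)$. Fix $\g_0 \in C$, so $C = \{(c\s)^k\g_0 : k \in \BZ\}$, and $\b \in \supp(C)$ iff $n_\b\bigl((c\s)^k \g_0\bigr) \ne 0$ for some $k$. The displayed identity then immediately gives $\s^i\a_\CO \in \supp(C)$ if and only if $\a_\CO \in \supp(C)$ for each $0 \le i \le d-1$. Hence $\supp(C) \cap \CO$ is either empty or all of $\CO$ for every $\s$-orbit $\CO \subseteq \D$, which is precisely the $\s$-stability of $\supp(C)$. There is no substantial obstacle: the only subtle point is ensuring that every intermediate simple root encountered in the iteration lies outside the transversal, and this is automatic since $\a_\CO$ is the unique transversal element in $\CO$.
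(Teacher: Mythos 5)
Your proof is correct and rests on the same mechanism as the paper's: the coefficient of a simple root outside the transversal $\{\a_1,\dots,\a_r\}$ is unchanged by $c$, so applying powers of $c\s$ propagates membership in $\supp(C)$ around each $\s$-orbit of $\D$. Your coefficient identity $n_{\s^i\a_\CO}((c\s)^k\g)=n_{\a_\CO}((c\s)^{k-i}\g)$ is just a cleaner bookkeeping of the paper's factorizations $(c\s)^{\pm i}=u\,\s^{\pm i}$ with $\supp(u)$ avoiding the relevant simple reflection.
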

\begin{proof}
   Let $c = s_{\a_1} \cdots s_{\a_r}$ be as in \S\ref{subsec: sequence}. Let $\a \in \supp(\g)$ for some $\g \in C$. It suffices to show that the $\s$-orbit $\CO$ of $\a$ is contained in $\supp(C)$. Set $\d = \sharp \CO$. Let $1 \le j \le r$ be the unique integer such that $\a_j \in \CO$. Let $0 \le i_0 \le \d-1$ such that \[\a, \s\i(\a), \dots, \s^{1-i_0}(\a) \neq \a_j \text{ and } \s^{-i_0}(\a) = \a_j.\] Then one checks that $(c\s)^{-i_0} = \s^{-i_0} w$ for some $w \in W$ such that $\supp(w) \subseteq \D - \{\a\}$. Hence $\a \in \supp(w(\g))$ and $\a_j = \s^{-i_0}(\a) \in \supp(\s^{-i_0} w(\g)) = \supp((c\s)^{-i_0}(\g))$. So we can assume that $\a = \a_j$. Let $0 \le i \le \d-1$. Note that $(c\s)^i = u_i \s^i$ for some $u_i \in W$ with $\supp(u_i) \subseteq \D - \{\s^i(\a_j)\}$. It follows that $\s^i(\a) \in \supp((c\s)^i(\g))$. So the statement follows.
\end{proof}

\begin{proposition} \label{conn}
    Let $C$ be a $c\s$-orbit of $\Phi$. Then $\supp(C) = \cup_{i \in \BZ} \s^i(H)$, where $H$ is a connected component of $\D$.
\end{proposition}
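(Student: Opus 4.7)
The plan is to establish both inclusions in $\supp(C) = \bigcup_{i \in \BZ} \sigma^i(H)$ separately. For the inclusion $\subseteq$, I observe that each simple reflection $s_{\alpha_j}$ appearing in $c = s_{\alpha_1} \cdots s_{\alpha_r}$ fixes every root whose support is disjoint from the connected component of $\alpha_j$. Thus $c$ preserves the decomposition $\Phi = \bigsqcup_{H'} \Phi_{H'}$ indexed by connected components $H' \subseteq \Delta$, and since $\sigma$ permutes these components, $c\sigma$ preserves each union $\bigsqcup_i \Phi_{\sigma^i(H)}$ over a single $\sigma$-orbit of components. The orbit $C$ therefore lies in exactly one such union; taking $H$ to be the component containing $\supp(\gamma)$ for some (equivalently, any) $\gamma \in C$---well defined because $\supp(\gamma)$ is a connected subset of $\Delta$---yields $\supp(C) \subseteq \bigcup_i \sigma^i(H)$.

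For the reverse inclusion, Lemma \ref{stable} ensures that $\supp(C)$ is $\sigma$-stable, so it suffices to show $\supp(C) \cap H = H$. Combining the constructions of \S\ref{subsec: sequence} with Theorem \ref{Coxeter}, the restriction of $(c\sigma)^m$ to $\Phi_H$, where $m$ denotes the $\sigma$-period of $H$, takes the form $c' \cdot \tau$ with $\tau = \sigma^m|_H$ a diagram automorphism of $H$ and $c' \in W_H$ a $\tau$-Coxeter element of $W_H$. Consequently Proposition \ref{conn} reduces to the following irreducible statement: every orbit of a $\tau$-Coxeter element acting on an irreducible root system $\Phi_H$ has support equal to $H$.

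To prove this I would argue by contradiction, assuming the orbit support $D$ is a proper $\tau$-stable subset of $H$. Connectedness of $H$ and $\tau$-stability of $D$ together produce a representative simple root $\alpha_j \in H \setminus D$ adjacent to $D$. Since the reflections $s_{\alpha_i}$ with $i \ne j$ never touch the $\alpha_j$-coordinate, a direct tracing of $c'\tau(\gamma) = s_{\alpha_1} \cdots s_{\alpha_r} \tau(\gamma)$ shows that the $\alpha_j$-coefficient of $c'\tau(\gamma)$ equals $-\langle s_{\alpha_{j+1}} \cdots s_{\alpha_r} \tau(\gamma),\, \alpha_j^\vee \rangle$. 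Demanding this to vanish for every $\gamma$ in the orbit places the entire translate $\tau(C)$ inside the hyperplane $(\beta^\vee)^\perp$, where $\beta^\vee = s_{\alpha_r} \cdots s_{\alpha_{j+1}}(\alpha_j^\vee)$.

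The main obstacle is converting this hyperplane containment into a genuine contradiction. Since the roots in $\tau(C)$ collectively have support $D$, and the span of a $c'\tau$-orbit is $c'\tau$-invariant, the natural route is to conclude that the span is all of $V_D$, forcing $V_D \subseteq (\beta^\vee)^\perp$; one must then verify that the explicit coroot $\beta^\vee$ cannot be orthogonal to every simple root in $D$, exploiting that $\alpha_j \in \supp(\beta)$ while $\alpha_j$ is adjacent to some $\alpha \in D$. This is a combinatorial property of reduced expressions of twisted Coxeter elements; an alternative cleaner route is to invoke that $c'\tau$ acts elliptically (without nonzero fixed vectors) on the reflection representation, ruling out any compatible proper $c'\tau$-stable subspace containing a full orbit.
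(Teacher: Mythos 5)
Your reductions are fine as far as they go: passing to a single $\s$-orbit of connected components, and then to an irreducible diagram acted on by the twisted Coxeter element $(c\s)^m|_{\Phi_H}=c'\tau$, is a legitimate (if here unnecessary) preliminary step, and the computation showing that the $\a_j$-coefficient of $c'\tau(\g)$ equals $-\<s_{\a_{j+1}}\cdots s_{\a_r}\tau(\g),\a_j^\vee\>$ is correct. The problem is that the essential step is never proved: you reduce to showing that $\tau(C)$ cannot lie in the hyperplane orthogonal to $\b^\vee=s_{\a_r}\cdots s_{\a_{j+1}}(\a_j^\vee)$, and then explicitly defer the contradiction. Neither of your proposed routes closes this. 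That full support of $\tau(C)$ forces its linear span to be the whole span of $D$ is false for general subsets of $\Phi_D$ (in type $A_2$ the set $\{\pm(\a_1+\a_2)\}$ has full support but spans a line), and establishing it for Coxeter orbits is essentially as hard as the proposition itself. Ellipticity of $c'\tau$ is also insufficient: a fixed-point-free linear map can have proper invariant subspaces containing a full orbit of roots, so nothing is ruled out. There is also the secondary loose end you gloss over, that the representative $\a_j$ occurring in the chosen reduced word need not itself be adjacent to $D$ (only some $\tau^k(\a_j)$ need be).

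The paper closes exactly this gap with a one-line positivity argument that your setup obscures. It runs an induction on $\sharp\D$: after $\s$-conjugating $c$ (cyclically rotating the word) so that the missing simple root is $\a_1$, the first letter, the reflection $s_{\a_1}$ is the first one applied in $(c\s)^{-1}(\g)=\s^{-1}s_{\a_r}\cdots s_{\a_1}(\g)$, so the relevant coroot is the simple coroot $\a_1^\vee$ rather than your composite $\b^\vee$. Since a root $\g$ with $\a_1\notin\supp(\g)$ has all simple-root coefficients of one sign, and every simple root pairs non-positively with $\a_1^\vee$ with strict inequality for the neighbour $\b\in\supp(\g)$ of $\a_1$ supplied by connectedness, one gets $\<\a_1,\g^\vee\>\neq 0$; hence $\a_1$ enters $\supp(s_{\a_1}(\g))$, survives the remaining reflections, and $\s^{-1}(\a_1)\in\supp((c\s)^{-1}(\g))$, contradicting (via Lemma \ref{stable}) the assumption that $\supp(C)$ avoids the $\s$-orbit of $\a_1$. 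If you rotate your word so that the absent simple root sits at the position where its reflection acts first, your orthogonality relation becomes $\<\tau(\g),\a_j^\vee\>=0$ and the same sign argument finishes the proof; as written, however, the proof is incomplete. (Minor point: your appeal to Theorem \ref{Coxeter} for the restriction $(c\s)^m|_{\Phi_H}$ is off-target; that theorem concerns the Coxeter element of $W_e$, not the decomposition of $\D$ into components.)
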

\begin{proof}
Without loss of generality we may assume that $\D = \cup_{i \in \BZ} \s^i(H)$. We argue by induction on $\sharp \D$. Assume the statement is false. Let $c = s_{\a_1} \cdots s_{\a_r}$ be as in \S\ref{subsec: sequence}. By Lemma \ref{stable} there exists $1 \le j \le r$ such that $C \subseteq \Phi_K$, where $K = \D - \CO$ and $\CO$ is the $\s$-orbit of $\a_j$. By replacing $c$ with its $W_K$-$\s$-conjugate $s_{\a_j} \cdots s_{\a_r} \s(s_{\a_1} \cdots s_{\a_{j-1}})$, we can assume that $j = 1$ and $\a_1 \in \CO$. Let $c' = s_{\a_1} c$, which is a $\s$-Coxeter element of $W_K$. As $C \subseteq \Phi_K$, $C$ is also a $c'\s$-orbit of $\Phi_K$. By induction hypothesis we have $\supp(C) = \cup_{i \in \BZ} \s^i(D)$, where $D$ is a connected component of $H - \{\a_1\}$. As $H$ is connected, there exists $\g \in C$ and $\b \in \supp(\g)$ such that \[0 > \<\a_1, \b^\vee\> \ge \<\a_1, \g^\vee\>.\] Then we have $\s\i(\a_1) \in \supp((c\s)\i(\g))$, contradicting that $C \subseteq \Phi_K$. The proof is finished.     
\end{proof}

\subsection{A condition on the $\sigma$-Coxeter element} \label{set-up}
Let $c$, $\mu$, $e = e_{\mu, c}$, $I = I_{\mu, c}$, $c_I$, $\s_I = \s_{I, c}$ and $\D_I = \D_{I, c} \subseteq \D_e$ be as in Theorem \ref{Coxeter}. Denote by $\ell \colon W \to \BZ_{\geq 0}$ (resp. $\ell_e: W_e \to \BZ_{\ge 0}$) the length function associated to the set $\Delta$ (resp. $\D_e$) of simple roots. Let $w_0$ and $w_e$ be the longest elements of $W$ and $W_e$ respectively. We consider the following condition on $c$, or, equivalently, on the pair $(T,U)$:
\begin{equation}\label{eq:ast} \tag{$\ast$} \text{There exists $N \in \BZ_{\ge 1}$ such that }\, (c\s)^N = w_0 \s^N, \quad N \ell(c) = \ell(w_0).
\end{equation}

\begin{remark}\label{rem:bipartite_Coxeterelements} If $\Delta$ is connected, then there always exists a $\sigma$-Coxeter element $c \in W$ satisfying \eqref{eq:ast}, see \cite[Chap. V, Prop. 6.2]{Bourbaki_68}. Moreover, if the Coxeter number of $G$ is even, then any $c$ satisfies this condition.
\end{remark}

\begin{lemma} \label{ast}
Suppose $c$ satisfies condition \eqref{eq:ast}. Then $(c_I \s_I)^N = w_e \s_I^N$ and $N \ell_e(c_I) = \ell_e(w_e)$.
\end{lemma}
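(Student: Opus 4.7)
The starting point is the definitional identity $c\sigma = c_I\sigma_I$ in $W \rtimes \langle\sigma\rangle$ recorded just before Theorem~\ref{Coxeter}. Raising to the $N$-th power and using the first half of~\eqref{eq:ast} yields
\[
(c_I\sigma_I)^N \;=\; (c\sigma)^N \;=\; w_0\sigma^N.
\]

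To prove the first assertion of the lemma, I would expand
\[
(c_I\sigma_I)^N \;=\; C_I\cdot\sigma_I^N,\qquad C_I:=c_I\cdot\sigma_I(c_I)\cdots\sigma_I^{N-1}(c_I).
\]
By Theorem~\ref{Coxeter}(1), $\sigma_I$ preserves $W_e$ and $\Delta_e$ (hence $\Phi_e^+$), so $C_I\in W_e$ and $\sigma_I^N$ stabilizes $\Phi_e^+$. On the other hand, $w_0\sigma^N$ sends $\Phi^+$ into $\Phi^-$, and the common element $(c_I\sigma_I)^N$ also stabilizes $\Phi_e$ setwise (since $c_I\in W_e$ and $\sigma_I$ both do). Therefore $(c_I\sigma_I)^N$ maps $\Phi_e^+=\Phi_e\cap\Phi^+$ into $\Phi_e^-=\Phi_e\cap\Phi^-$. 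Combining, $C_I=(c_I\sigma_I)^N\sigma_I^{-N}$ is an element of $W_e$ sending $\Phi_e^+$ to $\Phi_e^-$, and the only such element is $w_e$. Hence $(c_I\sigma_I)^N = w_e\sigma_I^N$.

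For the length identity, the inequality $\ell_e(w_e)\le N\ell_e(c_I)$ is immediate from the just-derived expression $w_e = c_I\cdot\sigma_I(c_I)\cdots\sigma_I^{N-1}(c_I)$ together with the fact that $\sigma_I$ permutes the Coxeter generators of $W_e$ and so preserves $\ell_e$. For the reverse inequality, since $c_I$ is a $\sigma_I$-Coxeter element of $W_e$ (Theorem~\ref{Coxeter}) one has $\ell_e(c_I)=|\Delta_e/\sigma_I|$ and $\ell_e(w_e)=|\Phi_e^+|$, so it suffices to establish
\[
|\Phi_e^+| \;=\; N\cdot|\Delta_e/\sigma_I|.
\]
The plan is to count $c_I\sigma_I$-orbits on $\Phi_e$. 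From the first assertion, together with $\sigma_I(w_e)=w_e$ (as $\sigma_I$ is a diagram automorphism of $(W_e,\Delta_e)$), one computes $(c_I\sigma_I)^{2N} = \sigma_I^{2N}$, while $(c_I\sigma_I)^N$ swaps $\Phi_e^+$ and $\Phi_e^-$. These two facts should force every $c_I\sigma_I$-orbit on $\Phi_e$ to have size exactly $2N$ and, via Proposition~\ref{conn} applied in $\Phi_e$, to be parameterized by the $\sigma_I$-orbits on $\Delta_e$, giving $|\Phi_e|=2N\,|\Delta_e/\sigma_I|$ as desired.

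The main obstacle is this orbit-counting step. The upper bound ``orbit size $\le 2N$'' (modulo the order of $\sigma_I^{2N}$ on $\Phi_e$) follows readily from $(c_I\sigma_I)^{2N}=\sigma_I^{2N}$; excluding strictly smaller orbits and establishing the bijection with $\sigma_I$-orbits on $\Delta_e$ is more delicate. I expect to handle this by comparison with the analogous picture on $\Phi$, where \eqref{eq:ast} forces the $c\sigma$-orbits to all have size $2N$, exploiting the $c\sigma$-stability of the decomposition $\Phi=\Phi_e\sqcup(\Phi\setminus\Phi_e)$ and the structural information on orbit supports supplied by Proposition~\ref{conn}.
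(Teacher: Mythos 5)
Your treatment of the first assertion is correct and coincides with the paper's: from $c_I\sigma_I=c\sigma$ and $(c\sigma)^N=w_0\sigma^N$ one sees that $(c_I\sigma_I)^N$ sends $\Phi_e^+$ to $-\Phi_e^+$ while $\sigma_I^N$ preserves $\Phi_e^+$, so $(c_I\sigma_I)^N\sigma_I^{-N}=w_e$. The problem is the length identity. The subadditivity bound $\ell_e(w_e)\le N\ell_e(c_I)$ is fine, but your reverse inequality rests on the count $\sharp\Phi_e^+=N\cdot\sharp(\Delta_e/\langle\sigma_I\rangle)$, and the orbit-counting argument you offer for it is only a sketch whose two essential claims are unproven: that every $c_I\sigma_I$-orbit on $\Phi_e$ has size exactly $2N$, and that these orbits biject with the $\sigma_I$-orbits on $\Delta_e$. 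Neither is routine. Since $(c_I\sigma_I)^{2N}=\sigma_I^{2N}$ need not act trivially on $\Phi_e$, orbit sizes are only pinned down modulo the order of $\sigma_I^{2N}$ (as you concede), and Proposition~\ref{conn} controls only the support of an orbit, not its cardinality or the number of orbits; moreover $\Phi_e$ is typically reducible, so one cannot simply quote the classical count of Coxeter orbits on an irreducible root system. As written, this is a genuine gap.

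There is a short direct argument that avoids orbit counting altogether, and it is what the paper does: condition \eqref{eq:ast} gives $\ell\bigl((c\sigma)^N\sigma^{-N}\bigr)=\ell(w_0)=N\ell(c)$, hence $\ell((c\sigma)^{i+1})=\ell((c\sigma)^i)+\ell(c\sigma)$ for $1\le i\le N-1$; equivalently, every $\alpha\in\Phi^+$ with $(c\sigma)^{-1}(\alpha)<0$ satisfies $(c\sigma)^i(\alpha)>0$. Because $\Phi_e^+=\Phi_e\cap\Phi^+$ and $c_I\sigma_I=c\sigma$ stabilizes $\Phi_e$, this root-theoretic criterion restricts verbatim to $\Phi_e$, yielding $\ell_e((c_I\sigma_I)^{i+1})=\ell_e((c_I\sigma_I)^i)+\ell_e(c_I\sigma_I)$ and hence $\ell_e(w_e)=\ell_e\bigl((c_I\sigma_I)^N\sigma_I^{-N}\bigr)=N\ell_e(c_I)$. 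I recommend replacing the orbit count by this restriction argument.
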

\begin{proof}
    By Theorem \ref{Coxeter}, $c_I \s_I = c \s$ and $\s_I(\D_e) = \D_e$. As $(c_I \s_I)^N = w_0 \s^N$, it follows that $(c_I \s_I)^N$ sends $\Phi_e^+$ to $-\Phi_e^+$, that is, $(c_I \s_I)^N = w_e \s_I^N$.

    It remains to show $\ell_e((c_I \s_I)^{i+1}) = \ell_e((c_I \s_I)^i) + \ell_e(c_I \s_I)$ for $1 \le i \le N-1$. Indeed, this is equivalent to that for any $\a \in \Phi_e^+$ with $(c_I \s_I)\i(\a) < 0$ we have $(c_I \s_I)^i(\a) > 0$. This statement follows from that $c_I \s_I = c\s$ and $\ell((c\s)^{i+1}) = \ell((c\s)^i) + \ell(c\s)$ for $1 \le i \le N-1$.
\end{proof}

For $w \in W$ we denote by $\supp(w)$ the set of simple reflections in $\D$ that appears in some/any reduced expression of $w$. For $u \in W_e$, we can define $\supp_{\D_e}(u) \subseteq \D_e$ in a similar way.
\begin{corollary} \label{proper}
    Suppose $c$ satisfies condition \eqref{eq:ast}. Let $K \subsetneq \D_e$ be a proper $\s_I$-stable subset. Then there exists a proper $\s$-stable subset $J \subsetneq \D$ such that $\s_I \in W_J \s$ and $w_e W_K \subseteq w_0 W_J$.
\end{corollary}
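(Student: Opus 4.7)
The plan is to define $J$ as the $\sigma$-closure in $\Delta$ of $A \cup \supp(K)$, where $A := \bigcup_{p \in \BZ} \sigma^p\{\alpha_{i_1}, \dots, \alpha_{i_m}\}$ and $\supp(K) := \bigcup_{\kappa \in K} \supp(\kappa)$, and then to verify the three required properties. By construction $J$ is $\sigma$-stable, and the main challenge will be to show it is a proper subset of $\Delta$. As a preliminary I compute $w_0^{-1} w_e$: setting $x := \sigma_I \sigma^{-1} = s_{\alpha_{i_1}} \cdots s_{\alpha_{i_m}} \in W$, an easy induction yields $\sigma_I^s = y_s \sigma^s$ with $y_s := x \cdot \sigma(x) \cdots \sigma^{s-1}(x) \in W_A$. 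Equating $(c\sigma)^N = w_0 \sigma^N$ from \eqref{eq:ast} with $(c_I \sigma_I)^N = w_e \sigma_I^N$ from Lemma \ref{ast} (and using $c_I \sigma_I = c\sigma$) then gives $w_e^{-1} w_0 = \sigma_I^N \sigma^{-N} = y_N$, so $w_0^{-1} w_e = y_N^{-1} \in W_A$.

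Given this formula, two of the three conditions are routine. The membership $\sigma_I = x\sigma \in W_J \sigma$ is immediate from $x \in W_A \subseteq W_J$. For the containment $w_e W_K \subseteq w_0 W_J$, it suffices to show $w_0^{-1} w_e \cdot W_K \subseteq W_J$; since $w_0^{-1} w_e = y_N^{-1} \in W_A \subseteq W_J$, this reduces to $W_K \subseteq W_J$, which in turn follows from $\supp(K) \subseteq J$ by the standard fact that for $\kappa \in \Phi$ and $J' \subseteq \Delta$, one has $s_\kappa \in W_{J'}$ if and only if $\supp(\kappa) \subseteq J'$.

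The main obstacle is the properness $J \subsetneq \Delta$. Since $K$ is $\sigma_I$-stable and $\Delta_{I,c} = \{\beta_{j_1}, \dots, \beta_{j_{r-m}}\}$ represents the $\sigma_I$-orbits of $\Delta_e$, $K$ is a disjoint union of $\sigma_I$-orbits of the $\beta_{j_l}$ for $l$ ranging over some $L \subseteq \{1, \dots, r-m\}$; the hypothesis $K \subsetneq \Delta_e$ forces $L \subsetneq \{1, \dots, r-m\}$. The key support estimate is
\[
\supp(\sigma_I^s(\beta_{j_l})) \subseteq A \cup \{\sigma^s(\alpha_{j_l})\} \quad \text{for all } l \in L,\ s \in \BZ.
\]
This follows by writing $\beta_{j_l} = u_l(\alpha_{j_l})$ with $u_l := s_{\alpha_{i_1}} \cdots s_{\alpha_{i_t}} \in W_A$, computing $\sigma_I^s(\beta_{j_l}) = y_s \sigma^s(u_l)(\sigma^s(\alpha_{j_l}))$ with $y_s \sigma^s(u_l) \in W_A$, and invoking the standard bound $\supp(w\gamma) \subseteq A \cup \supp(\gamma)$ for $w \in W_A$. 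Summing over $\kappa \in K$ yields $\supp(K) \subseteq A \cup \bigcup_{l \in L,\, s \in \BZ} \{\sigma^s(\alpha_{j_l})\}$, which is already $\sigma$-stable and so contains $J$. Any $l_0 \in \{1, \dots, r-m\} \setminus L$ then exhibits $\alpha_{j_{l_0}} \notin J$ (since $\alpha_1, \dots, \alpha_r$ represent distinct $\sigma$-orbits of $\Delta$), proving $J \subsetneq \Delta$.
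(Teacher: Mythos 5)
Your proof is correct and follows essentially the same route as the paper's: both rest on the identity $w_e^{-1}w_0 = \sigma_I^{N}\sigma^{-N}\in W_A$ obtained by combining \eqref{eq:ast} with Lemma \ref{ast}, together with the observation that the supports of the $\sigma_I$-orbits making up $K$ avoid the $\sigma$-orbit of some $\alpha_{j_{l_0}}$ with $\beta_{j_{l_0}}\notin K$. The only difference is organizational: you take $J$ minimal (the $\sigma$-closure of $A\cup\supp(K)$), whereas the paper takes it maximal ($\Delta$ minus that one $\sigma$-orbit, making properness automatic), and you spell out the support estimate that the paper compresses into ``by construction''.
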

\begin{proof}
    Let notation be as in \S \ref{subsec: sequence}. As $\D_I = \{\b_j; j \in I'\}$ with $I' = [r] - I$ is a representative set of $\D_e$, there exists $i \in I'$ such that $\b_i \notin K$. Let $J = \D - \CO_i$, where $\CO_i$ is the $\s$-orbit of $\a_i$. By construction, $\supp(s) \subseteq J$ for $s \in K$ and $\supp(\s_I \s\i) \subseteq J$. By Lemma \ref{ast} we have \[w_e = (c_I \s_I)^N \s_I^{-N} = (c \s)^N \s_I^{-N} = w_0 \s^N \s_I^{-N} \subseteq w_0 W_J.\] Thus $w_e W_K \subseteq w_0 W_J$ as desired.    
\end{proof}

\begin{lemma} \label{partial}
    Let $K_1, K_2 \subseteq \D_e$ be two $\s_I$-stable subsets. Let $c_1$ and $c_2$ be two $\s_I$-Coxeter elements of $W_{K_1}$ and $W_{K_2}$ respectively. Let $w \in W_e$ such that $c_1 \s_I(w) = w c_2$. Then there exists $x \in {}^{K_1} W_e {}^{K_2}$ such that  ${}^x K_2 = K_1$ and  $w \in x W_{K_2}$.
\end{lemma}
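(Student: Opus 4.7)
The plan is to let $x$ denote the unique minimum-length element of the double coset $W_{K_1} w W_{K_2}$---so $x \in {}^{K_1}\!W_e^{K_2}$ by definition---and to prove that $x K_2 = K_1$. Once this is established, $xK_2x^{-1} = K_1$ gives $W_{K_1} x W_{K_2} = x W_{K_2}$, so the second assertion $w \in x W_{K_2}$ follows automatically. Writing $w = u x v$ with $u \in W_{K_1}$ and $v \in W_{K_2}$, I would first show that $\s_I(x) = x$. The key point is that $\s_I$ preserves $\D_e$, stabilizes $K_1$ and $K_2$, and preserves the length function $\ell_e$, so $\s_I(x) \in {}^{K_1}\!W_e^{K_2}$ with $\ell_e(\s_I(x)) = \ell_e(x)$. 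Unfolding the hypothesis $c_1 \s_I(w) = w c_2$ as
\[ c_1 \s_I(u) \cdot \s_I(x) \cdot \s_I(v) \;=\; u \cdot x \cdot v c_2 \]
and reading off double cosets shows $W_{K_1}\s_I(x)W_{K_2} = W_{K_1} x W_{K_2}$, hence $\s_I(x) = x$ by uniqueness of the minimum-length representative. Substituting this back and isolating $x$ yields
\[ u^{-1} c_1 \s_I(u) \;\in\; W_{K_1} \cap x W_{K_2} x^{-1}, \]
which by Kilmoyer's theorem equals $W_{L'}$ with $L' := K_1 \cap x K_2 \subseteq K_1$; the $\s_I$-stability of $K_1$, $K_2$ and $x$ then gives $\s_I(L') = L'$.

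The crux is to show $L' = K_1$, for which I plan to use the classical ellipticity of (twisted) Coxeter elements. In $W_{K_1} \rtimes \langle \s_I \rangle$ one has $u^{-1}(c_1 \s_I) u = (u^{-1} c_1 \s_I(u))\, \s_I$, so this element is conjugate to $c_1 \s_I$ in the semidirect product and therefore has the same spectrum on the reflection space $V_{K_1} := \BQ \cdot K_1^\vee$. Because $c_1$ is a $\s_I$-Coxeter of $W_{K_1}$, the operator $c_1 \s_I$ has no fixed nonzero vector on $V_{K_1}$, i.e.\ $1$ is not an eigenvalue. If on the contrary $L' \subsetneq K_1$, then the element $u^{-1} c_1 \s_I(u) \in W_{L'}$ acts trivially on the nonzero quotient $V_{K_1}/V_{L'}$, so the induced operator is just $\s_I$ permuting the basis $\{\alpha^\vee + V_{L'} : \alpha \in K_1 \setminus L'\}$; summing over any single $\s_I$-orbit in $K_1 \setminus L'$ produces a fixed nonzero vector on the quotient, hence the eigenvalue $1$ on $V_{K_1}$ itself---a contradiction. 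So $L' = K_1$, i.e.\ $K_1 \subseteq x K_2$.

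For the reverse inclusion I would invert the hypothesis: $c_1 \s_I(w) = w c_2$ rearranges to $c_2 \s_I(w^{-1}) = w^{-1} c_1$, an equation of the same shape with $(K_1,c_1)$ and $(K_2,c_2)$ interchanged. Since $\ell_e$ is invariant under inversion, the minimum-length element of $W_{K_2} w^{-1} W_{K_1}$ is $x^{-1}$, and running the previous argument with $w^{-1}$ in place of $w$ yields $K_2 \subseteq x^{-1} K_1$, equivalently $x K_2 \subseteq K_1$. Combining gives $x K_2 = K_1$, which completes the proof.

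The step I expect to be the main obstacle is the rigidity argument $L' = K_1$: this is where the hypothesis that $c_1$ and $c_2$ are $\s_I$-Coxeter (and not merely arbitrary elements of $W_{K_1}$ and $W_{K_2}$) is essential. The characteristic-polynomial argument sketched above handles it in a self-contained fashion, reducing to the standard fact that a (twisted) Coxeter element has no eigenvalue $1$ on its reflection representation; everything else is bookkeeping with minimum-length double coset representatives and Kilmoyer's theorem.
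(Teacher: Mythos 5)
Your proof is correct, but it follows a genuinely different route from the paper's. The paper works with the single coset $W_{K_1}w$: it takes the minimal representative $x\in{}^{K_1}W_e$, derives $\s_I(x)=xc_2'$ for some partial $\s_I$-Coxeter element $c_2'\le c_2$, invokes the He--Nie property that such elements have minimal length in their $\s_I$-conjugacy class to force $c_2'=1$, reads off $x(K_2)\subseteq K_1$ from $x(\supp_{\D_e}(c_2))\subseteq K_1$, and upgrades the inclusion to an equality by the harmless normalization $\sharp K_1\le\sharp K_2$. You instead take $x$ to be the distinguished representative of the \emph{double} coset, get $\s_I(x)=x$ at once, use Kilmoyer's theorem to identify $W_{K_1}\cap xW_{K_2}x^{-1}$ with the standard parabolic $W_{L'}$, $L'=K_1\cap x(K_2)$, and then force $L'=K_1$ by the ellipticity of the twisted Coxeter element $c_1\s_I$ on $V_{K_1}$ (no eigenvalue $1$ on the reflection representation), obtaining the reverse inclusion by inverting the hypothesis rather than by counting. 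Both arguments are sound; yours trades the minimal-length-in-twisted-conjugacy-class input for two other standard facts (Kilmoyer and ellipticity of twisted Coxeter elements, e.g.\ Springer's theory of regular elements), and it has the virtue of isolating exactly where the Coxeter hypothesis enters --- it is precisely what rules out $u^{-1}c_1\s_I(u)$ landing in a proper standard parabolic of $W_{K_1}$. Do make sure to cite the ellipticity of \emph{twisted} Coxeter elements explicitly, since the Bourbaki reference used elsewhere in the paper covers only the untwisted case.
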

\begin{proof}
By symmetry we may assume $\sharp K_1 \le \sharp K_2$.
Let $x \in {}^{K_1} W_e$ such that $w \in W_{K_1} x$. Then there exists $c_2' \leq c_2$ such that $x c_2 \in W_{K_1} x c_2'$ and $x c_2' \in {}^{K_1} W_e$. Hence we have $\s_I(x) = x c_2'$. Note that $c_2'$ is a partial $\s_I$-Coxeter element, which is of minimal length (in the sense of $\ell_e$) in its $\s_I$-conjugacy class. Thus $c_2' = 1$, $x = \s_I(x)$ and $x(\supp_{\D_e}(c_2)) \subseteq K_1$, which implies that $x(K_2) \subseteq K_1$. Hence $x(K_2) = K_1$ since $\sharp K_1 \leq \sharp K_2$. Thus $x \in {}^{K_1} W_e {}^{K_2}$ as desired.
\end{proof}

\section{Cohomology of $X$}

Recall the scheme $X$ from \eqref{eq:X} equipped with $\BG^F \times \BT^F$-action. 
% Let notation be as in \S \ref{set-up}. Let $\BG$ be the parahoric subgroup associated to $e$. For subgroups $\bH$ of $\bG$ we set $\BH = \bH(\brk) \cap \BG$. 
% We define \[X = \{g \in \BG; g\i F(g) \in F \BU,\}\] on which $\BG^F \times \BT^F$ acts by left/right multiplications. 
%Let $\chi: \BT^F \to \overline \BQ_\ell$ be a smooth character, so that we have the corresponding virtual $\BG^F$-representation $H_\ast(X)[\chi]$. The following is our main result.

% Then the isotropic subspace of the $\ell$-adic homology \[H_*(X)[\chi] := \sum_i (-1)^i H_i(X)[\chi]\] gives a virtual representation of $\BG^F$.

\subsection{The schemes $\Sigma^i$}
Let $i \in \BZ$. We define \[\Sigma^i = \{(x, x', y) \in F\BU \times F^{i+1} \BU \times \BG; x F(y) = y x'\}.\] 
Let $\pr_3: \Sigma^i \to \BG$ be the natural projection. There is a locally closed decomposition \[\Sigma^i = \bigsqcup_{w \in W_e} \Sigma_w^i,\] where $\Sigma_w^i = \pr_3\i(\BU w \BT \BG^1 F^i\BU)$. 

The group $\BT^F \times \BT^F$ acts on $\Sigma^i$ and on each of the pieces $\Sigma^i_w$ by \[(t, t'): (x, x', y) \mapsto (t x t\i, t' x' {t'}\i, t y {t'}\i).\]
As in \cite[p.137]{DeligneL_76} there is a $\BT^F \times \BT^F$-equivariant isomorphism $X \times X/\BG^F \stackrel{\sim}{\to} \Sigma^0$, and for characters $\chi, \chi'$ of $\BT^F$ we have 
\[\dim_{\overline{\BQ}_\ell} \Hom_{\BG^F}(H_*(X)[\chi'], H_*(X)[\chi]) = \dim H_*(\Sigma^0)_{\chi', \chi\i},\] where $H_*(\Sigma^0)_{\chi', \chi}$ is the corresponding isotropic subspace of $H_c^*(\Sigma^0)$. 

Let $Z \subseteq G$ denote the centre of $G$ and consider the embedding $z \mapsto (z,z^{-1}) \colon Z \to T \times T$. Then the above $\BT^F \times \BT^F$-action on $\Sigma^i$ factors through an action of the quotient $\BT^F \times^{\BZ^F} \BT^F$. This latter action extends to the action of $\BT^F \times^{\BZ^F} \BT^F \subseteq (\BT \times^{\BZ} \BT)^F$ on $\Sigma^i$ (and $\Sigma^i_w$ for $w \in W_e$) given by the same formula. By the discussion in \cite[\S4.2]{DI} which applies in our more general setting, Theorem \ref{thm:main} follows from the next result.

\begin{theorem}\label{thm:main2}
Suppose that $q$ satisfies condition \eqref{eq:M}. Then there exists a Coxeter pair $(T,U)$ such that
\[H_\ast(\Sigma_w^0) = \begin{cases}  H_\ast((\dot w\BT)^{c\sigma}) & \text{if $w \in W_e^{c\sigma}$,} \\ \{0\} & \text{otherwise.} \end{cases}\]
as virtual $(\BT \times^{\BZ} \BT)^F$-modules.
\end{theorem}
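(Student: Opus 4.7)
The plan is to generalize the proof in the quasi-split case (\cite[Theorem 3.2.3]{DI}, \cite[Theorem 4.1]{CI_loopGLn}) by replacing the Dynkin-level combinatorics $(W, S, \sigma)$ with the reductive-quotient combinatorics $(W_e, \Delta_e, \sigma_I)$ supplied by Theorem \ref{Coxeter}. Choose a Coxeter pair $(T, U)$ such that $c$ satisfies condition \eqref{eq:ast}; this is possible by Remark \ref{rem:bipartite_Coxeterelements}. Then Theorem \ref{Coxeter} together with Lemma \ref{ast} gives a $\sigma_I$-Coxeter element $c_I$ of $W_e$ with $(c_I \sigma_I)^N = w_e \sigma_I^N$ and $N\ell_e(c_I) = \ell_e(w_e)$, matching the combinatorial role played by the $\sigma$-Coxeter element in \cite{DI}.

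\textbf{The computation.} For each $w \in W_e$ parametrize $\Sigma_w^0 \subseteq \BU \times F\BU \times \BG$ by expanding $y \in \BU \dot w \BT \BG^1 \BU$ and then filtering $\BG^1$ through its Moy--Prasad layers. The equation $xF(y) = yx'$ becomes, layer by layer, an affine-linear system in root coordinates; proceed by induction on the level $r$, following the template of \cite{DI}. When $w \in W_e^{c\sigma}$, the induction collapses the $x, x', u, u'$ variables by Lang--Steinberg-type arguments and produces an affine fibration over $(\dot w \BT)^{c\sigma}$, yielding the desired cohomology formula. When $w \notin W_e^{c\sigma}$ the goal is to exhibit a free action of $\BA^1$ on $\Sigma_w^0$ coming from a well-chosen root subgroup at some Moy--Prasad level, which annihilates the compactly supported cohomology.

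\textbf{Finding the free action.} The construction of the free $\BA^1$-action is the delicate part and is where condition \eqref{eq:M} enters. Since $w$ is not $c\sigma$-fixed, an obstruction invariant of $w$ (e.g.\ the $\sigma_I$-support of $w^{-1} c\sigma(w)$, or a pair $(K_1, K_2)$ extracted from a Bruhat-type decomposition of $w$ via Lemma \ref{partial}) yields a proper $\sigma_I$-stable $K \subsetneq \Delta_e$. Corollary \ref{proper} then produces a proper $\sigma$-stable $J \subsetneq \Delta$ with $w_e W_K \subseteq w_0 W_J$, so that the analysis of $\Sigma_w^0$ can be pushed into a region controlled by $\Phi_J$. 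Proposition \ref{conn} (together with Lemma \ref{stable}) ensures that $c\sigma$-orbits of positive roots are large enough to reach outside $\Phi_J$; condition \eqref{eq:M}, the analogue of the bound in \cite[\S2.7]{DI} adapted to $\sigma$-orbits of fundamental coweights, guarantees that in any Moy--Prasad layer traversed during the induction a root direction remains available to host the free $\BA^1$-action.

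\textbf{Main obstacle.} The principal difficulty is bookkeeping: at each inductive step one must simultaneously track the Moy--Prasad level, the Bruhat stratum in the reductive quotient $\BG_0$ indexed by $W_e$, the transport of root subgroups under conjugation by $\dot w$, and the interplay between the actions of $\sigma$ on $\Phi$ and $\sigma_I$ on $\Phi_e$ — a layer absent in \cite{DI}. One must verify that the combinatorial input of \S\ref{subsec: sequence} (Theorem \ref{Coxeter}, Lemma \ref{ast}, Corollary \ref{proper}, Lemma \ref{partial}) keeps feeding the free-direction argument at every recursive descent, and that condition \eqref{eq:M} is sharp enough at each level to supply an unused root subgroup. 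Once this bookkeeping is in place, the vanishing for $w \notin W_e^{c\sigma}$ and the identification with $H_\ast((\dot w \BT)^{c\sigma})$ for $w \in W_e^{c\sigma}$ follow from the same Künneth- and Lang--Steinberg-type manipulations as in the quasi-split case.
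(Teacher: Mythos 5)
Your outline assembles the right combinatorial inputs (condition \eqref{eq:ast}, Theorem \ref{Coxeter}, Corollary \ref{proper}, Lemma \ref{partial}, condition \eqref{eq:M}), but it is missing the central mechanism of the paper's proof, and the two concrete devices you propose in its place would not suffice as stated. The paper does not attack $\Sigma_w^0$ directly by a Moy--Prasad induction. It works with the whole family $\Sigma^i$, $i\in\BZ$, the shift isomorphisms $\alpha_i\colon \Sigma^i\to\Sigma^{i+1}$, $(x,x',y)\mapsto (x,F(x'),yx')$, and the resulting two-parameter decomposition $\Sigma_w^i=\bigsqcup_u Y^i_{w,u}$ with $\alpha_i(Y^i_{w,u})=Z^{i+1}_{w,u}$. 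The vanishing for $w\notin W_e^{c\sigma}$ (Proposition \ref{vanishing}) is obtained piecewise on the affine bundles $\hat Y^i_{w,u}$ by a $\BG_m$-fixed-point argument --- via the cocharacter $\chi=(c\sigma)^i(\omega_\CO^\vee)$ produced by Lemma \ref{unstable} and Corollary \ref{proper}, together with Lemma \ref{non-zero}, which is where \eqref{eq:M} enters --- i.e.\ a multiplicative action with empty fixed locus, not a free $\BA^1$-action on $\Sigma_w^0$ itself. More importantly, the vanishing of the individual pieces $Y^i_{w,u}$ for \emph{all} $i$ is exactly what feeds the second half of the argument, so proving vanishing only for the single scheme $\Sigma_w^0$ would not be enough.

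For $w=(c\sigma)^m\in W_e^{c\sigma}$, your proposed Lang--Steinberg collapse to an affine fibration over $(\dot w\BT)^{c\sigma}$ at level $i=0$ does not go through: the stratum $\BU\dot w\BT\BG^1 \BU$ is not of big-cell type for general $m$, and there is no direct parametrization of $\Sigma_w^0$ of the required shape. The paper instead uses Corollary \ref{sum} and Lemma \ref{nonempty} to produce the chain $H_*(\Sigma_w^0)=H_*(\Sigma_w^1)=\cdots=H_*(\Sigma_w^{N-m})$, and only at the terminal index, where $w(c\sigma)^{N-m}=(c\sigma)^N=w_0\sigma^N$ turns the relevant stratum into the big cell, is the cohomology computed directly (Lemma \ref{indetity-case}, via the cross-section Theorem \ref{cross-section} and a further torus action). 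Since your write-up explicitly defers the ``bookkeeping'' that would have to substitute for this shifting argument, the proposal as it stands is a plan rather than a proof, and the missing idea --- trading $i=0$ for a convenient $i$ via the $\alpha_i$ while controlling the cross terms $Y^i_{w,u}$ --- is the essential one.
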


As a first step towards the proof of Theorem \ref{thm:main2} we observe that the whole discussion of \cite[\S4.3]{DI} applies \emph{mutatis mutandis} in our setting. Thus it suffices to prove Theorem \ref{thm:main2} in the case that $\Delta$ is connected. In particular, there exists some $c$ satisfying condition \eqref{eq:ast}, cf. Remark \ref{rem:bipartite_Coxeterelements}. Now Theorem \ref{thm:main2} follows from Corollary \ref{sum} and Proposition \ref{prop:final_prop} below.

\subsection{An extension of action}
Let $w \in W_e$. We set $K_{w, i} = {}^{w\i}U^- \cap F^i U^-$. Define \[\hat\Sigma_w^i = \{(\tx, \tx', y_1, \t, z, y_2) \in F\BU \times F^{i+1}\BU \times \BU \times \dw\BT \times \BK_{w, i}^1 \times F^i \BU; \tx F(\t z) = y_1 \t z y_2 \tx'\}.\] We define an action of $\BT^F \times \BT^F$ on $\hat\Sigma_w^i$ by \[(t, t'): (\tx, \tx', y_1, \t, z, y_2) \mapsto (t \tx t\i, t' \tx' {t'}\i, t y_1 t\i, t \t {t'}\i, t' z {t'}\i, t' y_2 {t'}\i).\] Then there is an $\BT^F \times \BT^F$-equivariant affine space bundle \[\pi_w^i: \hat\Sigma_w^i \to \Sigma_w^i, \quad (\tx, \tx', y_1, \t, z, y_2) \mapsto (\tx F(y_1)\i, \tx' F(y_2), y_1 \t z y_2).\]

Let $\chi \in X_*(T)$ which centralizes $K_{w, i}$. Define \[H_{w, \chi} = \{(t, t') \in \BT \times \BT; w\i t\i F(t) w = {t'}\i F(t') \in \Im(\chi)\}.\] Then $H_{w, \chi}$ acts on $\hat\Sigma_w^i$ by \[(t, t'): (\tx, \tx', y_1, \t, z, y_2) \mapsto ({}^{F(t)} \tx, {}^{F(t')} \tx', {}^{F(t)} y_1, t \t {t'}\i, {}^{t'} z, {}^{F(t')} y_2).\]

\begin{lemma} \label{unstable}
    Let $w \in W_e \setminus W_e^{c\s}$ such that $\Sigma_w^i \neq \emptyset$. Then there exists a proper subset $K = \s_I(K) \subsetneq \D_e$ such that $w (c_I \s_I)^i \s_I^{-i} \in w_e W_K$.
\end{lemma}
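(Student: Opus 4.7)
The strategy is to work in the reductive quotient $\BG_1 = \BG/\BG^1$ of $\BG$, whose root system is $\Phi_e$ and whose Weyl group is $W_e$, and to extract a combinatorial condition from the equation $x F(y) = y x'$ via Bruhat decomposition.

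From $\Sigma_w^i \neq \emptyset$ we obtain $x \in \BU$, $x' \in F^{i+1}\BU$ and $y \in \BU w \BT \BG^1 F^i \BU$ with $x F(y) = y x'$. Denote by $\bar g$ the image of $g \in \BG$ in $\BG_1$, and by $\ov{\BU}, \ov{\BT}, \ov{\BB}$ the images of $\BU, \BT, \BB$; note that $\ov{\BU}$ has root system $\Phi_e^+$. The key observation is that, modulo $\BG^1$, the subgroup $F^j \BU$ reduces to $\dot\tau_j \ov{\BU} \dot\tau_j^{-1}$ inside $\BG_1$, where $\tau_j := (c_I \s_I)^j \s_I^{-j} \in W_e$; this uses that $\s_I$ stabilizes $\Phi_e^+$ and that the Frobenius acts on $\Phi_e$ via $c_I \s_I$. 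Right-multiplying $\bar y$ by $\dot\tau_i$ therefore lands in the Bruhat cell $\ov{\BU} \dot v \ov{\BB}$ for $v := w \tau_i \in W_e$. The reduced equation $\bar x F(\bar y) = \bar y \bar x'$, with $\bar x \in \ov{\BU}$ and $\bar x' \in \dot\tau_{i+1} \ov{\BU} \dot\tau_{i+1}^{-1}$, then becomes an equation in $\BG_1$ whose two sides' Bruhat double cosets can be compared. Matching these cosets yields an identity of the form $v \in w_e W_K$ for some subset $K \subseteq \D_e$ parameterizing which simple reflections are forced to appear on each side.

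I expect the resulting $K$ to be $\s_I$-stable because the Frobenius action on $W_e$ differs from $\s_I$ only by the Coxeter element $c_I \in W_e$, so the apparent asymmetry between the two sides of the equation is internal to $W_e$ and symmetrizes in the Bruhat analysis; the structural results of \S\ref{set-up}, especially Corollary \ref{proper} and Lemma \ref{partial}, should organize this bookkeeping. Properness of $K$ will then follow by contradiction: if $K = \D_e$, the Bruhat matching degenerates and the equation forces the Frobenius to fix $w$, i.e., $w \in W_e^{c\s}$, contradicting the hypothesis $w \in W_e \setminus W_e^{c\s}$. The main technical obstacle is the Bruhat comparison itself — carefully tracking the twists by $\tau_i$ and $\tau_{i+1}$ in order to extract precisely the $\s_I$-stable $K$ rather than a larger or non-stable set; once $K$ is identified, the properness step is essentially formal.
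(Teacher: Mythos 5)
Your general orientation is right---pass to the reductive quotient $\BG/\BG^1$ with Weyl group $W_e$, compare Bruhat double cosets coming from the equation $xF(y)=yx'$, and use the combinatorics of \S\ref{set-up}---but the plan has two genuine gaps that are precisely where the content of the proof lives.

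First, the passage from ``Bruhat cell intersection nonempty'' to a usable combinatorial identity is not spelled out. In the paper, after rewriting everything in terms of $\BB_1$ one obtains
\[
c_I\,\BB_1\,\s_I(w_i)\,\BB_1 \;\cap\; \BB_1\,w_i\,\BB_1\,\s_I^i(c_I)\neq\emptyset,
\]
and the crucial deduction is that there exist \emph{partial} $\s_I$-Coxeter elements $v'\le_e c_I$ and $v\le_e \s_I^i(c_I)$ of $\s_I$-stable subsets $K'$ and $K$ with $v'\s_I(w_i)=w_iv$; only then does Lemma~\ref{partial} apply (with $c_1=v'$, $c_2=v$), and only at that point does the hypothesis $w\notin W_e^{c\s}$ enter---forcing at least one of $K,K'$ to be proper (and hence both, since Lemma~\ref{partial} gives $\sharp K'=\sharp K$). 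Your plan invokes Lemma~\ref{partial} in passing, but never produces the input to it, and attributes properness to a vague ``degeneration'' of the Bruhat matching rather than to this hypothesis.

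Second, and more seriously, the conclusion $w_i\in w_e W_K$ does \emph{not} fall out of matching double cosets, as your plan claims. Lemma~\ref{partial} only gives $w_i\in xW_K$ for some $x=\s_I(x)\in{}^{K'}W_e{}^{K}$ with ${}^xK=K'$. Getting from $x$ to $w_e w_K$ requires a dedicated argument: from the intersection condition one shows that for every simple reflection $s\in\supp_{\D_e}(\s_I^i(c_I))\setminus K$ one has $xs\le_e x$ (ruling out $xs\in W_{K'}x$), propagates this to $w_{K'}xs\le_e w_{K'}x=xw_K$, and then uses $\s_I$-stability of $xw_K$ together with the fact that one now has $xw_Ks\le_e xw_K$ for all $s\in\D_e$ to conclude $xw_K=w_e$. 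This longest-element bookkeeping is the heart of the lemma, and it is entirely absent from your plan. Finally, a small point: Corollary~\ref{proper} is not used here (it appears later, in Proposition~\ref{vanishing}); the only \S\ref{set-up} ingredient you actually need is Lemma~\ref{partial}.
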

\begin{proof}
    Let $w_i = w (c_I \s_I)^i \s_I^{-i} \in W_e$. By assumption we have \[c\s \BB w (c\s)^i \BB \BG^1 (c\s)^{-i-1} \cap \BB w(c\s)^i \BB \BG^1 c\s \BB (c\s)^{-i-1} \neq \emptyset.\] As $c_I \s_I = c\s$, this implies that \[c_I \s_I \BB_1 w (c_I \s_I)^i \BB_1 \cap \BB_1 w (c_I \s_I)^i \BB_1 c_I\s_I \neq \emptyset,\] that is, \[c_I \BB_1 \s_I(w_i) \BB_1 \cap \BB_1 w_i \BB_1 (\s_I)^i(c_I) \neq \emptyset.\] In particular there are $\s_I$-Coxeter elements $v' \leq_e c_I$ and $v \leq_e (\s_I)^i(c_I)$ of some $\s_I$-stable subsets $K'$ and $K$ of $\D_e$ respectively (one of which is a proper subset of $\D_e$ since $w \in W_e \setminus W_e^{c\s}$) such that $v' \s_I(w_i) = w_i v$ and \[\tag{a} \BB_1 w_i \BB_1 (\s_I)^i(c_I) \cap \BB_1 w_i v \BB_1 \neq \emptyset.\] Applying Lemma \ref{partial}, there exist $x = \s(x) \in {}^{K'} W_e {}^K$ such that $K' = {}^x K$ and $w_i \in x W_K$. Moreover, it follows from (a) that for any simple reflection $s \in \supp_{\D_e}(\s_I^i(c_I)) \setminus K$ we have $x s \in W_{K'} x$ or $x s \leq_e x$. The former is impossible since $s \notin W_K = x W_{K'} x\i$. So we have $x s \leq_e x$. Moreover, as $x s x\i \notin W_{K'}$ we have $w_{K'} x s \leq_e w_{K'} x = x w_K$, where $w_K$ and $w_{K'}$ are the maximal elements of $W_K$ and $w_{K'}$ respectively. As $x w_K$ is $\s_I$-stable, we have $x w_K s \leq_e x w_K$ for all $s \in \D_e$, that is, $x w_K = w_e$. Hence $w_i \in w_e W_K$.
\end{proof}

Let $N_0 \in \BZ_{\ge 0}$ be the order of $c\s \in W \rtimes \<\s\>$. Define \[N_F^{F^{N_0}}: \BT \to \BT, \quad t \mapsto t F(t) \cdots F^{N_0 - 1}(t).\] 

%Let 
%\begin{equation}\label{eq:M}
%M = \max\{\<\g, \o_\a^\vee\>; \g \in \Phi^+, \a \in \D\}. 
%\end{equation}
%Here $\o_\a$ denotes the fundamental coweight of $\a \in \D$. 
\begin{lemma} \label{non-zero}
    Let $\chi \in X_*(T)$ and let $C$ be a $c\s$-orbit of $\Phi$. Assume $\chi$ is non-central on $C$ and $|\<\chi, \b\>| < q$ for $\b \in C$. Then $\sum_{i=0}^{N_0 - 1} q^i \<\g, (c\s)^i(\chi)\> \neq 0$ for $\g \in C$. In particular, the action of $\BG_m$ on $\BU_\g$ for $\g \in C$, via the morphism $N_F^{F^{N_0}} \circ \chi$, is nontrivial.
\end{lemma}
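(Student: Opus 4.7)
The plan is to view the expression $\sum_{i=0}^{N_0-1} q^i \langle \gamma,(c\s)^i\chi\rangle$ as a signed base-$q$ expansion of an integer and exploit digit uniqueness. First I would set $a_i := \langle \gamma, (c\s)^i\chi\rangle = \langle (c\s)^{-i}\gamma, \chi\rangle$; since $c\s$ preserves the natural pairing and $C$ is a $c\s$-orbit, one has $(c\s)^{-i}\gamma \in C$, so the hypothesis $|\langle \chi,\beta\rangle|<q$ for $\beta\in C$ gives $|a_i|<q$ for every $i$.

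The core step is then the digit-uniqueness argument. Suppose for contradiction that $\sum_{i=0}^{N_0-1}q^i a_i=0$. Reducing modulo $q$ yields $a_0\equiv 0\pmod q$, whence $a_0=0$ by the bound $|a_0|<q$; dividing by $q$ and iterating gives $a_i=0$ for all $i$. Since $N_0$ is defined as the order of $c\s$, the period of the $c\s$-action on $C$ divides $N_0$, so the elements $(c\s)^{-i}\gamma$ with $0\le i<N_0$ exhaust $C$. Hence $\langle \beta,\chi\rangle=0$ for every $\beta\in C$, contradicting that $\chi$ is non-central on $C$.

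For the ``in particular'' clause I would unwind $N_F^{F^{N_0}}\circ\chi$ using that Frobenius acts on the cocharacter lattice by $c\s$ and on the $\BG_m$-parameter by the $q$-th power. Iterating this identity gives
\[
(N_F^{F^{N_0}} \circ \chi)(x) \;=\; \prod_{i=0}^{N_0-1} ((c\s)^i\chi)(x^{q^i}) \;=\; \Bigl(\sum_{i=0}^{N_0-1} q^i (c\s)^i \chi\Bigr)(x),
\]
so the resulting action of $\BG_m$ on $\BU_\gamma$ is $x\mapsto x^{\sum_i q^i \langle \gamma,(c\s)^i\chi\rangle}$, which is nontrivial precisely because the exponent is the nonzero integer produced in the first part.

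I do not anticipate any genuine obstacle: the one spot that needs care is the digit-uniqueness, which requires both the strict inequality $|a_i|<q$ (built into the hypothesis) and the surjectivity of $i\mapsto (c\s)^{-i}\gamma$ onto $C$ (built into the choice of $N_0$). The computation of $N_F^{F^{N_0}}\circ\chi$ is routine once one uses the standard formula for Frobenius on the loop torus.
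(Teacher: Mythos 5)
Your proposal is correct and follows essentially the same route as the paper: the paper's (very terse) proof records exactly the two facts you isolate — that $|\langle(c\s)^{-i}\gamma,\chi\rangle|<q$ for all $i$ and that some such pairing is nonzero — and leaves the base-$q$ digit-uniqueness step implicit, which you spell out correctly. The unwinding of $N_F^{F^{N_0}}\circ\chi$ for the ``in particular'' clause likewise matches the computation the paper performs in Lemma \ref{morphism}.
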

\begin{proof}
    By assumption, $|\<\g, (c\s)^i(\chi)\>| = |\<(c\s)^{-i}(\g), \chi\>| < q$ for $0 \le i \le N_0-1$, and there exists $0 \le i_0 \le N_0-1$ such that $\<(c\s)^{-i_0}(\g), \chi\> \neq 0$. Hence the statement follows.
\end{proof}

Let $\BG_m \subseteq \CO_\brk^\times$ be the Teichm\"{u}ller lift of the quotient map $\CO_\brk^\times \to \overline \BF_q^\times$.  Assume that $r \in \BZ_{\ge 1}$. 
\begin{lemma} \label{morphism}
Consider the homomorphism \[f_{w, \chi}: \BG_m \to \BT \times \BT, \quad x \mapsto (N_F^{F^{N_0}}({}^w\chi(x)), N_F^{F^{N_0}}(\chi(x))).\] Then $\Im(f_{w, \chi}) \subseteq H_{w, \chi}^\circ$.
\end{lemma}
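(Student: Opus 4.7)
The strategy is to verify directly that $f_{w,\chi}(x)$ satisfies the defining conditions of $H_{w,\chi}$ for every $x \in \BG_m$, and then to deduce the refinement to $H_{w,\chi}^\circ$ by a connectedness argument. Membership in $H_{w,\chi}$ requires both a Lang-type identity $\dot w\i t\i F(t) \dot w = (t')\i F(t')$ and that the common value lies in $\Im(\chi)$; both will fall out of one and the same calculation with the torus norm $N_F^{F^{N_0}}$.

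The main algebraic input is the telescoping identity
\[ N_F^{F^{N_0}}(s)\i \cdot F(N_F^{F^{N_0}}(s)) = s\i F^{N_0}(s), \]
valid for any $s \in \BT$ since $\BT$ is commutative and the intermediate factors cancel. Applying it to $s = {}^w\chi(x)$ and to $s = \chi(x)$, one obtains $t\i F(t) = ({}^w\chi)(x)\i F^{N_0}({}^w\chi(x))$ and $(t')\i F(t') = \chi(x)\i F^{N_0}(\chi(x))$ respectively. The next step is to evaluate $F^{N_0}$ on Teichm\"uller cocharacter images. Here I would use the formula $F(\eta(x)) = (c\s\cdot\eta)(x^q)$ for any $\eta \in X_\ast(\BT)$ (the point being that $\s$ acts on $\overline\BF_q$-valued points by $q$-th power, while inner automorphism by $\dot c$ is the Weyl-group part of $F$); iterating $N_0$ times one gets $F^{N_0}(\eta(x)) = ((c\s)^{N_0}\cdot\eta)(x^{q^{N_0}})$, and by the very definition of $N_0$ the element $(c\s)^{N_0}$ is trivial in $W \rtimes \<\s\>$, so the Weyl-group part disappears and the whole expression collapses to $\eta(x^{q^{N_0}})$.

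Substituting back, both $t\i F(t)$ and $(t')\i F(t')$ become evaluations of their respective cocharacters at $x^{q^{N_0}-1}$, and conjugating by $\dot w$ converts ${}^w\chi$ into $\chi$, giving the clean identity $\dot w\i t\i F(t) \dot w = \chi(x^{q^{N_0}-1}) = (t')\i F(t')$. The common value lies tautologically in $\Im(\chi)$, so $f_{w,\chi}(x) \in H_{w,\chi}$ for every $x$. Finally, $f_{w,\chi}$ is a morphism of algebraic groups with connected source $\BG_m$ sending the unit to $(1,1) \in H_{w,\chi}$, so its image is contained in the neutral component $H_{w,\chi}^\circ$. The only delicate point is the bookkeeping of how $F$ factors into its combinatorial $c\s$-action on the cocharacter lattice and its $q$-th-power action on Teichm\"uller lifts; once this is correctly set up, the remainder is a formal computation in the abelian group $\BT$.
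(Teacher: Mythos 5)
Your proof is correct and follows essentially the same route as the paper: the telescoping identity for the norm $N_F^{F^{N_0}}$, the fact that $F^{N_0}(\eta(x)) = \eta(x^{q^{N_0}})$ because $N_0$ is the order of $c\s$, and the resulting identification of both sides with $\chi(x^{q^{N_0}-1}) \in \Im(\chi)$. The paper's proof is just a terser version of this computation (it does not even spell out the connectedness step you add at the end).
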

\begin{proof}
    By definition. $F^{N_0}(\l(x)) = \l(x^{q^{N_0}})$ for $x \in \brk$. Hence \[N_F(\chi(x))\i F(N_F(\chi(x))) = \chi(x)\i F^{N_0}(\chi(x)) = \chi(x\i\s^{N_0}(x)).\] So the statement follows.    
\end{proof}

\subsection{Handling $\Sigma_w^0$ for $w \in W_e \setminus W_e^{c\sigma}$}
Let $i \in \BZ$. Following \cite[\S5]{DI} we define an isomorphism of varieties \[\a_i: \Sigma^i \to \Sigma^{i+1}, \quad (x, x', y) \mapsto (x, F(x'), y x').\] For $w, u \in W_e$ we define \begin{align*}
    Y_{w, u}^i &= \Sigma_w^i \cap (\a_i)\i(\Sigma_u^{i+1}); \\
    Z_{w, u}^{i+1} &= \a_i(\Sigma_w^i) \cap \Sigma_u^{i+1} = \a_i(Y_{w, u}^i).
\end{align*} Let $\hat Y_{w, u}^i = (\pi_w^i)\i(Y_{w, u}^i)$ and $\hat Z_{w, u}^i = (\pi_u^{i+1})\i(Z_{w, u}^{i+1})$.

\begin{lemma} \label{piece}
    Let $w, u \in W_e$. Let $\chi, \mu \in X_*(T)$ which centralizes $\BK_{w, i}$ and $\BK_{u, i+1}$ respectively. Then $H_{w, \chi}$ and $H_{u, \mu}$ preserve $\hat Y_{w, u}^i$ and $\hat Z_{w, u}^{i+1}$ respectively.
\end{lemma}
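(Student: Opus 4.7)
The plan is as follows. For both stability claims it suffices to show that $\pi_w^i((t,t')\cdot\xi) \in Y_{w,u}^i$ whenever $\pi_w^i(\xi) \in Y_{w,u}^i$ (and analogously for $\pi_u^{i+1}$ and $Z_{w,u}^{i+1}$), so I would first compute how the action descends through $\pi_w^i$. Using that $\chi$ centralizes $\BK_{w,i}$, so that the $z$-component commutes with any element in $\Im(\chi)$, together with the $H_{w,\chi}$-relation $w^{-1}(t^{-1}F(t))w = (t')^{-1}F(t')$, one first verifies that the action is well-defined on $\hat\Sigma_w^i$, that is, preserves the relation $\tilde x F(\tau z) = y_1 \tau z y_2 \tilde x'$; this check reduces to the torus identity $F(t)^{-1}t = \dot w\cdot ((t')^{-1}F(t'))^{-1}\cdot \dot w^{-1}$, which is exactly the $H_{w,\chi}$-condition. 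The same two ingredients then show that the third coordinate of $\pi_w^i((t,t')\cdot\xi)$ equals $F(t)\cdot (y_1\tau z y_2)\cdot F(t')^{-1}$, i.e.\ is obtained from the third coordinate of $\pi_w^i(\xi)$ by $(F(t),F(t'))$-conjugation.

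Once this is established, membership of $\pi_w^i((t,t')\cdot\xi)$ in $\Sigma_w^i$ is immediate: $\BT$ normalizes $\BU$, $F^i\BU$, $\BG^1$, and torus conjugation sends $\dot w \BT$ to itself, so the Bruhat stratum $\BU w\BT \BG^1 F^i\BU$ is stable under $(F(t),F(t'))$-conjugation. For the condition $\pi_w^i((t,t')\cdot\xi) \in \a_i^{-1}(\Sigma_u^{i+1})$ I would then compute the product (third coord)$\cdot$(second coord) of $\pi_w^i((t,t')\cdot\xi)$, which is the third coordinate of $\a_i(\pi_w^i((t,t')\cdot\xi))$. Using the defining relation of $\hat\Sigma_w^i$ to rewrite $y_1\tau z y_2 \tilde x'$ as $\tilde x F(\tau z)$, this product takes the shape $F(t)\cdot (yx')\cdot F(y_2)^{-1}F((t')^{-1}F(t'))F(y_2)F^2(t')^{-1}$, where $yx'$ is the corresponding product for $\pi_w^i(\xi)$. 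Since $F(y_2)\in F^{i+1}\BU$ and $F((t')^{-1}F(t')), F^2(t')^{-1}\in \BT$, the right-hand decoration decomposes (via torus conjugation of the $F^{i+1}\BU$-factor) into elements of $\BT\cdot F^{i+1}\BU$, which is absorbed by the Bruhat decomposition of $yx'\in \BU u \BT \BG^1 F^{i+1}\BU$, while $F(t)$ on the left is absorbed by the leftmost $\BU$-factor. Hence the product remains in $\BU u\BT \BG^1 F^{i+1}\BU$, as required.

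The argument for $H_{u,\mu}$ acting on $\hat Z_{w,u}^{i+1}$ is completely parallel, with $(w,\chi)$ replaced by $(u,\mu)$ and $i$ by $i+1$. The only additional observation is that $\a_i(\Sigma_w^i)\cap \Sigma^{i+1}$ is cut out inside $\Sigma^{i+1}$ by the condition that $Y\cdot F^{-1}(X'')^{-1}$ lies in $\BU w\BT \BG^1 F^i\BU$, and this condition is again preserved by the descended torus-type action on the third coordinate of $\Sigma^{i+1}$. The step I expect to be the main obstacle is the $\a_i^{-1}(\Sigma_u^{i+1})$-part: unlike the third coordinate, the \emph{second} coordinate of $\pi_w^i((t,t')\cdot\xi)$ does not transform by a clean $(F(t),F(t'))$-conjugation, so a naive ``torus preserves Bruhat strata'' argument is insufficient. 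One must really use the defining relation of $\hat\Sigma_w^i$ together with the centrality of $\chi$ on $\BK_{w,i}$ to rewrite the discrepancy as an element of $\BT\cdot F^{i+1}\BU$ that is then absorbed by the Bruhat decomposition.
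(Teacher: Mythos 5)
The paper itself does not give an argument here: its ``proof'' is a one-line citation to \cite[\S5]{DI}, so there is nothing to compare line by line, and a direct verification like yours is the right thing to attempt. Your treatment of $\hat Y_{w,u}^i$ is correct and isolates exactly the two needed ingredients: the relation $w\i t\i F(t)w = {t'}\i F(t')\in\Im(\chi)$ plus the centrality of $\chi$ on $\BK_{w,i}$ give both the well-definedness of the action on $\hat\Sigma_w^i$ and the fact that the third coordinate of $\pi_w^i((t,t')\cdot\xi)$ is $F(t)\,(y_1\t z y_2)\,F(t')\i$; and your identity $y_{\mathrm{new}}x'_{\mathrm{new}} = F(t)\,(yx')\,F(y_2)\i F\bigl({t'}\i F(t')\bigr)F(y_2)F^2(t')\i$ is right, with the right-hand decoration lying in $\BT\cdot F^{i+1}\BU$ and hence absorbed by the stratum $\BU u\BT\BG^1F^{i+1}\BU$.

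The $\hat Z_{w,u}^{i+1}$ half, however, is not ``completely parallel,'' and the absorption step fails as you describe it. Running the same computation for a point of $\hat\Sigma_u^{i+1}$ (so now $\tx'\in F^{i+2}\BU$ and $y_2\in F^{i+1}\BU$) gives
\[ Y_{\mathrm{new}}\,F^{-1}(X'_{\mathrm{new}})^{-1} \;=\; F(t)\cdot\bigl(Y\,F^{-1}(X')^{-1}\bigr)\cdot F(t')^{-1}\cdot v, \qquad v\in F^{i+1}\BU, \]
whereas the stratum to be preserved is $\BU w\BT\BG^1F^{i}\BU$, which ends in $F^{i}\BU$. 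Since $F^{i+1}\BU\not\subseteq F^{i}\BU$ in general, the element $v$ is \emph{not} absorbed by this Bruhat stratum, so ``rewrite the discrepancy as an element of $\BT\cdot F^{i+1}\BU$ and absorb it'' does not close this case. The fix is to use the defining relation of $\Sigma^{i+1}$ itself: from $XF(Y)=YX'$ one gets $Y\,F^{-1}(X')^{-1}=F^{-1}(X)^{-1}F^{-1}(Y)$, and since the first coordinate lies in $F\BU$ (as the formula for $\pi_u^{i+1}$ forces), the condition cutting out $Z_{w,u}^{i+1}$ inside $\Sigma_u^{i+1}$ is equivalent to $Y\in F\BU\cdot F(\dw)\BT\cdot\BG^1 F^{i+1}\BU$ --- a condition on the third coordinate alone, manifestly stable under $Y\mapsto F(t)YF(t')^{-1}$. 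With this reformulation (which is the same trick you already use implicitly for the first coordinate in the $\hat Y$ case) your argument is complete.
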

\begin{proof}
    This is proved in \cite[\S5]{DI}.
\end{proof}
%\begin{proof} Let $Q = (\tx, \tx', y_1, \t, z, y_2) \in \hat \Sigma_w^i$ and $(t, t') \in H_{w, \chi}$. By definition, $Q \in \hat Y_{w, u}^i$ if and only if $y_1 \t w z y_2 \tx' \in \BU \BT u \BG^1 F^{i+1}\BU$. Suppose $Q \in Y_{w, u}^i$. Thus $(t, t')Q \in \hat Y_{w, u}^i$ since \[F(t) y_1 F(t)\i t \t w z {t'}\i F(t') y_2 \tx' F(t')\i = F(t) y_1 \t w z y_2 \tx' F(t')\i \in \BU \BT u \BG^1 F^{i+1}\BU.\] Let $Q = (\tx, \tx', y_1, \t, z, y_2) \in \hat \Sigma_u^{i+1}$ and $(t, t') \in H_{w, \chi}$. By definition, $Q \in \hat Z_{w, u}^{i+1}$ if and only if $y_1 \t w z F\i(\tx')\i \in \BU \BT w \BG^1 F^i\BU$. Suppose $Q \in Z_{w, u}^{i+1}$. Thus $(t, t')Q \in \hat Z_{w, u}^{i+1}$ since \[{}^{F(t)} y_1 t \t u z F\i(\tx')\i {t'}\i \in \BU \BT y_1 \t u z y_2 F\i(\tx')\i {t'}\i \subseteq \BU \BT w \BG^1 F^i\BU.\] The proof is finished.\end{proof}

%For a $\s$-orbit $\CO \subseteq \D$ of simple roots, we set $\o_\CO^\vee = \sum_{\a \in \CO} \o_\a^\vee$, where $\o_\a^\vee$ denotes the fundamental coweight of $\a \in \D$. We define \[M = \max\{\<\g, \o_\CO^\vee\>; \g \in \Phi^+, \CO \in \D / \<\s\>\}.\]
\begin{proposition} \label{vanishing}
    Suppose that condition \eqref{eq:ast} holds and that $q$ satisfies condition \eqref{eq:M}. Let $i \in \BZ$. Then \[H_*(\hat Y_{w, u}^i) = H_*(Y_{w, u}^i) = H_*(Z_{w, u}^{i+1}) = H_*(\hat Z_{w, u}^{i+1}) = 0\] if $w$ or $u$ belongs to $W_e \setminus W_e^{c\sigma}$.
\end{proposition}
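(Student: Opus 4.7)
The plan is to produce a fixed-point-free $\BG_m$-action on $\hat Y_{w,u}^i$, which by the standard argument forces $H_\ast(\hat Y_{w,u}^i)=0$. Since $\pi_w^i$ and $\pi_u^{i+1}$ are affine space bundles and $\alpha_i$ restricts to an isomorphism $Y_{w,u}^i \cong Z_{w,u}^{i+1}$, this implies vanishing of all four cohomology groups in the statement. The roles of $w$ and $u$ are symmetric via $\alpha_i$, so I may assume $w \in W_e \setminus W_e^{c\sigma}$.

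By Lemma \ref{unstable} there is a proper $\sigma_I$-stable $K \subsetneq \D_e$ with $w(c_I\sigma_I)^i\sigma_I^{-i} \in w_e W_K$. Corollary \ref{proper} promotes this to a proper $\sigma$-stable $J \subsetneq \Delta$ with $\sigma_I \in W_J\sigma$ and $w_e W_K \subseteq w_0 W_J$. I pick any $\sigma$-orbit $\CO \subseteq \Delta \setminus J$ and set $\chi = \o_\CO^\vee \in X_\ast(T)$. Then $\chi$ is $W_J$-invariant and, by \eqref{eq:M}, satisfies $|\langle \beta, \chi\rangle| \le M < q$ for every root $\beta$. A short calculation using the coset relation for $w$ shows that the roots appearing in $K_{w,i}$ all lie in $\Phi_J$, so $\chi$ centralizes $\BK_{w,i}^1$. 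Lemma \ref{morphism} then produces $f_{w,\chi}\colon \BG_m \to H_{w,\chi}^\circ$ and Lemma \ref{piece} guarantees that the resulting $\BG_m$-action preserves $\hat Y_{w,u}^i$.

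To check fixed-point freeness, I examine the $\tilde x$-coordinate of a hypothetical fixed point $(\tilde x,\tilde x', y_1,\tau,z,y_2)$. On each root subgroup $\BU_\gamma$ with $\gamma \in F\Phi^+$, $\BG_m$ acts via the character induced by $N_F^{F^{N_0}}({}^w\chi)$. The reduction discussed right after Theorem \ref{thm:main2} lets me assume $\Delta$ is connected; Proposition \ref{conn} then forces the support of every $c\sigma$-orbit of $\Phi$ to equal $\Delta$ and hence to meet $\CO$, so ${}^w\chi$ is non-central on every such orbit. Lemma \ref{non-zero} makes the action on each $\BU_\gamma$ non-trivial, so any fixed point would force $\tilde x = 1$; the defining relation $\tilde x F(\tau z) = y_1\tau z y_2 \tilde x'$ combined with analogous non-triviality on the $\tilde x', y_1, y_2$ and $\tau$ factors propagates this, contradicting $w \in W_e \setminus W_e^{c\sigma}$.

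The main obstacle is the combinatorial bookkeeping in the last step: one must simultaneously verify that $\chi$ centralizes $\BK_{w,i}^1$ (so that the extended action is defined at all) and use the defining equation of $\hat\Sigma_w^i$ to rule out fixed points in all coordinates, not just $\tilde x$. The strategy is morally parallel to \cite[\S5]{DI}, but new to the Coxeter setting are two ingredients: Theorem \ref{Coxeter}, which relates the $\sigma$-Coxeter element $c$ in $W$ to the $\sigma_I$-Coxeter element $c_I$ in $W_e$, and Proposition \ref{conn}, which controls the supports of $c\sigma$-orbits on $\Phi$; together these allow the quasi-split argument of \cite{DI} to be transferred to the deep level case considered here.
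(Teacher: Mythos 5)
Your overall strategy — passing to $\hat Y_{w,u}^{i}$, picking a proper $\sigma$-stable $J$ from Lemma \ref{unstable} and Corollary \ref{proper}, extending the torus action through a cocharacter, and using Lemmas \ref{piece}, \ref{morphism}, \ref{non-zero} together with Proposition \ref{conn} to eliminate fixed points — is the same as the paper's. But there is a concrete error in the choice of cocharacter, and it breaks the step where you claim $\chi$ centralizes $\BK_{w,i}^1$.

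You set $\chi = \o_\CO^\vee$ and assert that the roots occurring in $K_{w,i}$ lie in $\Phi_J$. This is false. From $w(c\s)^i \in w_0 W_J\s^i$ one gets $K_{w,i} = {}^{w^{-1}}U^-\cap {}^{(c\s)^i}U^- \subseteq {}^{w^{-1}}\bigl(U^-\cap{}^{w(c\s)^i}U^-\bigr) \subseteq {}^{w^{-1}w_0}M_J$, so the roots of $K_{w,i}$ lie in $w^{-1}w_0(\Phi_J) = (c\s)^i(\Phi_J)$, not in $\Phi_J$. The cocharacter $\o_\CO^\vee$ pairs trivially with $\Phi_J$ but generically not with $(c\s)^i(\Phi_J)$, so $H_{w,\o_\CO^\vee}$ in general does not act on $\hat\Sigma_w^i$ at all. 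The correct choice is the conjugate $\chi = w^{-1}w_0(\o_\CO^\vee) = (c\s)^i(\o_\CO^\vee)$, which does annihilate $(c\s)^i(\Phi_J)$ and still lies in the $c\s$-orbit $\{(c\s)^j(\o_\CO^\vee)\}_j$; with it, $w(\chi) = w_0\s^N(\o_\CO^\vee) = (c\s)^N(\o_\CO^\vee)$ also lies in that orbit, and the non-centrality argument (Proposition \ref{conn} plus Lemma \ref{non-zero}, using $q>M$) applies to both coordinates of $f_{w,\chi}$ as needed. Once you make this substitution, the rest of your reduction — the WLOG via $\alpha_i$, the passage to connected $\Delta$, and the elimination of fixed points in the unipotent coordinates forcing $\tau\in\dw\BT$ to be $F$-fixed, impossible for $w\notin W_e^{c\s}$ — matches the paper's proof and is correct.
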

\begin{proof}
    Without loss of generality we can assume that $w \in W_e \setminus W_e^{c\sigma}$ and $\hat Y_{w, u}^i \neq \emptyset$. In particular, $\Sigma_w^i \neq \emptyset$. By Lemma \ref{unstable} and Corollary \ref{proper}, there are subsets $K = \s_I(K) \subsetneq \D_e$ and $J = \s(J) \subsetneq \D$ such that \[w (c\s)^i \in w_e W_K (\s_I)^i \subseteq w_e W_J \s^i = w_0 W_J \s^i.\] Thus \[K_{w, i} \subseteq {}^{w\i}(U^- \cap {}^{w (c\s)^i} U^-) \subseteq {}^{w\i w_0} M_J,\] where $M_J$ is the Levi subgroup generated by $T$ and $U_\g$ for $\g \in \Phi_J$. Let $\CO \in \D \setminus J$ be a $\s$-orbit. Then $W_J$ fixes $\o_\CO^\vee$, and $K_w \subseteq {}^{w\i w_0} M_J$ is centralized by \[\chi := w\i w_0(\o_\CO^\vee) = w\i w (c\s)^i \s^{-i} (\o_\CO^\vee) = (c\s)^i(\o_\CO^\vee).\] Moreover, $w(\chi) = w_0 \s^N(\o_\CO^\vee) = (c\s)^N(\o_{\CO}^\vee)$.

    Let $f_{w, \chi}: \BG_m \to H_{w, \chi}$ be the as in Lemma \ref{morphism}. In view of Lemma \ref{piece}, via $f_{w, \chi}$ the action of $H_{w, \chi}$ on $\hat Y_{w, u}^i$ induces an action of $\BG_m$ on $\hat Y_{w, u}^i$, which commutes with action of $\BT^F \times \BT^F$. Hence \[H_c^*(Y_{w, u}) = H_c^*(\hat Y_{w, u}^i) = H_c^*((\hat Y_{w, u}^i)^{\BG_m}),\] it suffices to show $(\hat Y_{w, u}^i)^{\BG_m} = \emptyset$. To this end, we can assume that $\D = \cup_{i \in \BZ} \s^i(H)$ for some/any connected component $H$ of $\D$. Then by Proposition \ref{conn}, $\chi, w(\chi) \in \{(c\s)^i(\o_\CO^\vee); i \in \BZ\}$ are non-central on each $c\s$-orbit of $\Phi$. As $q > M$, it follows from Lemma \ref{non-zero} that \[(\hat Y_{w, u}^i)^{\BG_m} \subseteq \{1\} \times \{1\} \times \{1\} \times \BT \times \{1\} \times \{1\}.\] As $w \in W_e \setminus W_e^{c\s}$, we deduce that $(\hat Y_{w, u}^i)^{\BG_m} = \emptyset$ as desired.
\end{proof}

\begin{corollary} \label{sum}
    Let $i \in \BZ$ and $w \in W_e$. If $w \in W_e \setminus W_e^{c\s}$ then $H_*(\Sigma_w^i) = 0$. Otherwise, \[H_*(\Sigma_w^i) = \sum_{u \in W_e^{c\s}} H_*(Y_{w, u}^i) = \sum_{u \in W_e^{c\s}} H_*(Z_{u, w}^i) = \sum_{u \in W_e^{c\s}} H_*(Y_{u, w}^{i-1}).\] 
\end{corollary}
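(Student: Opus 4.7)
The plan is to derive Corollary \ref{sum} as a direct consequence of Proposition \ref{vanishing} together with two locally closed stratifications of $\Sigma_w^i$.

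First I would observe that $\Sigma^{i+1} = \bigsqcup_{u \in W_e} \Sigma_u^{i+1}$ is a locally closed decomposition, and since $\a_i: \Sigma^i \to \Sigma^{i+1}$ is an isomorphism of varieties, pulling back yields the locally closed decomposition $\Sigma^i = \bigsqcup_{u \in W_e} \a_i^{-1}(\Sigma_u^{i+1})$. Intersecting with the piece $\Sigma_w^i$ gives $\Sigma_w^i = \bigsqcup_{u \in W_e} Y_{w,u}^i$. Additivity of $\ell$-adic cohomology with compact support for locally closed stratifications then yields the identity of virtual $(\BT \times^{\BZ}\BT)^F$-modules
\[
H_*(\Sigma_w^i) = \sum_{u \in W_e} H_*(Y_{w,u}^i).
\]

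Second, by the same argument applied to $\a_{i-1}: \Sigma^{i-1} \to \Sigma^i$, we obtain $\Sigma^i = \bigsqcup_{u \in W_e} \a_{i-1}(\Sigma_u^{i-1})$ and hence $\Sigma_w^i = \bigsqcup_{u \in W_e} Z_{u,w}^i$, which gives
\[
H_*(\Sigma_w^i) = \sum_{u \in W_e} H_*(Z_{u,w}^i).
\]
The identity $Z_{u,w}^i = \a_{i-1}(Y_{u,w}^{i-1})$, combined with the fact that $\a_{i-1}$ restricts to an isomorphism $Y_{u,w}^{i-1} \xrightarrow{\sim} Z_{u,w}^i$, immediately yields $H_*(Z_{u,w}^i) = H_*(Y_{u,w}^{i-1})$, and therefore the third equality.

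Finally I would invoke Proposition \ref{vanishing}: every term indexed by a $u \in W_e \setminus W_e^{c\s}$ contributes zero to the first sum, and every term indexed by a $u \in W_e \setminus W_e^{c\s}$ contributes zero to the second sum. This truncates both sums to $u$ ranging over $W_e^{c\s}$. Moreover, if $w \in W_e \setminus W_e^{c\s}$ itself, then Proposition \ref{vanishing} forces $H_*(Y_{w,u}^i) = 0$ for every $u$, so the first sum is identically zero, giving $H_*(\Sigma_w^i) = 0$ in that case. There is no real obstacle here: the argument is purely bookkeeping once Proposition \ref{vanishing} is in hand, with all the substantive input (the fixed-point argument using the $\BG_m$-action coming from $\chi = (c\s)^i(\o_\CO^\vee)$ and the condition $q > M$) already contained in the vanishing statement.
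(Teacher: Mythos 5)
Your argument is correct and coincides with the paper's own proof: the paper likewise notes $\Sigma_w^i = \sqcup_{u} Y_{w,u}^i = \sqcup_{u} Z_{u,w}^i$ with $Z_{u,w}^i \cong Y_{u,w}^{i-1}$, and then truncates the sums via Proposition \ref{vanishing}. No issues.
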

\begin{proof}
    Note that $\Sigma_w^i = \sqcup_{u \in W_e} Y_{w, u}^i = \sqcup_{u \in W_e} Z_{u, w}^i$ and $Z_{u, w}^i \cong Y_{u, w}^{i-1}$. Then the statement follows from Proposition \ref{vanishing}.
\end{proof}

\subsection{Handling $\Sigma_w^0$ for $w \in W_e^{c\sigma}$}
\begin{lemma} \label{nonempty}
    Suppose that Condition \eqref{eq:ast} holds. Let $i \in \BZ$ and $w, u \in W_e^{c\s}$ such that $Y_{w, u}^i \neq \emptyset$. Then $w = u$ if either $\s_I \neq 1$ or $\s_I = 1$ and $w c_I^i \neq w_e$.
\end{lemma}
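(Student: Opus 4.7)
The plan is to mimic the strategy of Lemma \ref{unstable}: from the nonemptiness of $Y_{w,u}^i$ derive a Bruhat intersection in the reductive quotient, extract a sub-$\sigma_I$-Coxeter element, use $F$-stability of $w^{-1}u$ (which follows from $w,u \in W_e^{c\sigma}$) to obtain a rigid combinatorial identity, and conclude by a case analysis separating $\sigma_I = 1$ from $\sigma_I \neq 1$.

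First, I would reduce to Bruhat combinatorics. Setting $u_j := (c_I\sigma_I)^j \sigma_I^{-j} \in W_e$ for $j \in \BZ$ (these lie in $W_e$ since $\sigma_I(\Delta_e) = \Delta_e$ by Theorem \ref{Coxeter}) and noting that $F^j\BU$ modulo $\BG^1$ becomes the $\dot u_j$-conjugate of the unipotent part of the Borel, the conditions $y \in \BU\dot w\BT\BG^1 F^i\BU$ and $yx' \in \BU\dot u\BT\BG^1 F^{i+1}\BU$ (together with $x' \in F^{i+1}\BU$) translate, as in the proof of Lemma \ref{unstable}, into the intersection
\[
\BB_1(wu_i)\BB_1 \cdot \dot c_i \;\cap\; \BB_1(uu_{i+1})\BB_1 \neq \emptyset,
\]
where $c_i := \sigma_I^i(c_I) \in W_e$ is a $\sigma_I$-Coxeter element and $\dot u_{i+1} = \dot u_i\dot c_i$. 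Iterating the standard Bruhat multiplication rule along a reduced expression $c_i = s_{j_1}\cdots s_{j_r}$, one extracts a sub-word $c_i' \leq_e c_i$ such that $uu_{i+1} = (wu_i)c_i'$, equivalently
\[
w^{-1}u = u_i c_i' c_i^{-1} u_i^{-1}.
\]

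Second, I would invoke $F$-stability. Here $F$ acts on $W_e$ by conjugation by $c_I\sigma_I$, and the identities $F(u_i) = u_{i+1} c_I^{-1}$ and $F(c_i) = c_I c_{i+1} c_I^{-1}$ (with $c_{i+1} := \sigma_I(c_i)$) follow directly from the definitions. Since $F(w^{-1}u) = w^{-1}u$, applying $F$ to the displayed equation and simplifying produces the key identity
\[
c_i \cdot \sigma_I(c_i') = c_i' \cdot \sigma_I(c_i).
\]

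Finally, this identity together with $c_i' \leq_e c_i$ should force $c_i' = c_i$, outside the stated exception. When $\sigma_I = 1$: the identity reads $[c_I, c_i'] = 1$, so $c_i'$ lies in the centralizer of the Coxeter element $c_I$, which is $\langle c_I\rangle$ (using that $\Delta$ is connected); combined with $c_i' \leq_e c_I$ this gives $c_i' \in \{1, c_I\}$, and $c_i' = 1$ is realizable in the Bruhat iteration only when $\ell_e(wu_i \cdot s) < \ell_e(wu_i)$ for every simple reflection $s \in \Delta_e$, equivalently $wu_i = w_e$ -- precisely the excluded case $wc_I^i = w_e$. When $\sigma_I \neq 1$: the case $c_i' = 1$ is directly excluded, since it would force $c_i = \sigma_I(c_i)$ and hence $\sigma_I$ to stabilize the representative set $\supp(c_I)$ setwise, which by transversality entails pointwise fixation, contradicting Theorem \ref{Coxeter}(3); for intermediate $1 < c_i' < c_i$, I would invoke Lemma \ref{partial} after localizing the identity to a proper $\sigma_I$-stable subset of $\Delta_e$. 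The main obstacle lies precisely in this last step: the naive application of Lemma \ref{partial} with $K_1 = K_2 = \Delta_e$ yields only the trivial $x = 1$, so a careful combinatorial analysis of sub-words of $\sigma_I$-Coxeter elements -- possibly by iterating the identity under powers of $\sigma_I$ to produce a chain of constraints forcing a contradiction via Theorem \ref{Coxeter}(3) -- will be required to exclude all intermediate $c_i'$ and conclude $c_i' = c_i$, hence $w = u$.
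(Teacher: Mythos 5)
Your opening moves agree with the paper: from $Y_{w,u}^i\neq\emptyset$ you pass to the Bruhat-cell intersection in the reductive quotient and extract a subword $v:=c_i'\le_e c_i$ with $u u_{i+1}=(w u_i)c_i'$, and your $F$-stability identity $c_i\,\s_I(c_i')=c_i'\,\s_I(c_i)$ is a correct consequence. The endgame, however, has a genuine gap, which you partly acknowledge yourself: for $\s_I\neq 1$ you have no argument excluding the intermediate subwords $1<c_i'<c_i$, and even for $\s_I=1$ the appeal to ``the centralizer of a Coxeter element is $\<c_I\>$'' is not justified, because connectedness of $\D$ does not imply connectedness of $\D_e$. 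When $\D_e$ is reducible, the centralizer of $c_I$ in $W_e$ is strictly larger than $\<c_I\>$ and contains, for instance, the Coxeter factor of $c_I$ on a single component, which is a subword of $c_I$ commuting with $c_I$ that your argument cannot eliminate. So neither branch of your case analysis closes.

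The missing idea is what the paper uses instead of your key identity. Rewriting the subword relation as $v\s_I=(c_I\s_I)^{-i}w^{-1}u(c_I\s_I)^{i+1}=w^{-1}u\,(c_I\s_I)$ and using $w,u\in W_e^{c\s}\subseteq\<c_I\s_I\>$ (the fixed points of the twisted Coxeter element lie in the cyclic group it generates, since $\D$ is connected), one sees that $v\s_I$ is itself a power of $c_I\s_I$. Lemma \ref{ast} --- this is where condition \eqref{eq:ast} actually enters, whereas your argument uses it at best implicitly --- then forces $\ell_e(v)$ to be divisible by $\ell_e(c_I)$, and together with $v\le_e c_I$ this yields $v\in\{1,c_I\}$ in one stroke, uniformly in $\s_I$ and with no connectedness hypothesis on $\D_e$. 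The case $v=c_I$ gives $w=u$; the case $v=1$ gives $c_I=u^{-1}w\in W_e^{c\s}$, hence $\s_I(c_I)=c_I$, hence $\s_I=1$ by Theorem \ref{Coxeter}(2)--(3), and then the Bruhat iteration forces $w c_I^i=w_e$, which is exactly the excluded configuration. I recommend replacing your centralizer/identity analysis by this length-divisibility argument; the rest of your setup can stay as is.
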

\begin{proof}
     By assumption, we have \[\BB_1 w (c_I \s_I)^i \BB_1 c_I \s_I \BB_1 (c_I \s_I)^{-i-1} \cap \BB_1 u (c_I \s_I)^{i+1} \BB_1 (c_I \s_I)^{-i-1} \neq \emptyset,\] that is, $\BB_1 w(c_I \s_I)^i \BB_1 c_I \s_I \BB_1 \cap \BB_1 u (c_I \s_I)^{i+1} \BB_1 \neq \emptyset$. Thus there exists $v \leq_e c_I$ such that $w(c_I \s_I)^i v \s_I = u(c_I \s_I)^{i+1}$. Note that $w, u \in W_e^{c\s} \subseteq \<c_I \s_I\>$. We have \[v \s_I = (c_I \s_I)^{-i} w\i u (c_I \s_I)^{i+1} = w\i u (c_I \s_I) \in \<c_I \s_I\>.\] In particular, it follows from Lemma \ref{ast} that $\ell_e(v)$ is divided by $\ell_e(c_I)$. 

     Assume that either $\s_I \neq 1$ or $\s_I = 1$ and $w c_I^i \neq w_e$. If $v \neq 1$, then $\ell_e(v) = \ell_e(c_I)$ since $1 \neq v \leq c_I$. Hence we have $v = c_I$ and $w = u$ as desired. Suppose $v = 1$. Then $c_I = u\i w \in W_e^{c\s}$, which means that $\s_I(c_I) = c_I$. Hence $\s_I = 1$ by Theorem \ref{Coxeter} (3). By assumption we have $\s_I = 1$ and $w \s_I^i \neq w_e$. As $v = 1$, we have $w c_I^i s < w c_I^i$ for all $s \in \supp_{\D_e}(c_I) = \D_e$, that is, $w c_I^i = w_e$, a contradiction. 
\end{proof}

\begin{theorem}[\cite{IvanovNie_24}, Theorem 3.1] \label{cross-section}
    The map $(u_1, u_2) \mapsto u_1\i u_2 F(u_1)$ gives an isomorphism \[\phi:(F\BU \cap \BU) \times (F\BU \cap \BU^-) \cong F\BU.\] In particular, $\phi$ restricts to an isomorphism \[(F\BU^1 \cap \BU) \times (F\BU \cap \BU^-) \cong  \BU^1 (F\BU \cap \BU^-).\]
\end{theorem}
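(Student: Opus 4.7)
The plan is to establish the first isomorphism by induction along a height filtration of $F\BU$, and to deduce the second statement by restriction to the depth-$1$ subgroups. First I would verify well-definedness: for $u_1 \in F\BU \cap \BU$, one has $u_1, u_1\i \in F\BU$ and $u_2, F(u_1) \in F\BU$, so $\phi(u_1, u_2) = u_1\i u_2 F(u_1) \in F\BU$. Since $c\Phi^+ = (c\Phi^+ \cap \Phi^+) \sqcup (c\Phi^+ \cap \Phi^-)$ as a disjoint union of root subsets, the source and target have the same dimension $|c\Phi^+|$, and it suffices to check bijectivity on $\overline\BF_q$-points (together with smoothness, which is automatic since both sides are iterated products of root groups).

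Next, I would set up a filtration $F\BU = V_0 \supseteq V_1 \supseteq \cdots$ by normal subschemes, where $V_i$ is generated by the root subgroups $\BU_\beta$ for $\beta \in c\Phi^+$ of height at least $i$, measured with respect to the simple system of $F\BB$. Each subquotient $V_i/V_{i+1}$ is abelian, being a product of height-$i$ root groups, and the filtration restricts to filtrations on both $F\BU \cap \BU$ and $F\BU \cap \BU^-$. Given $y \in F\BU$, I would solve the equation $u_2 = u_1 y F(u_1)\i$ with $u_2 \in F\BU \cap \BU^-$ by induction along this filtration: at each height level the Chevalley commutator formula, combined with the fact that commutators land in strictly higher levels, makes the equation linear in the next unknown coordinates of $u_1$ and $u_2$, and the decomposition $c\Phi^+ = (c\Phi^+ \cap \Phi^+) \sqcup (c\Phi^+ \cap \Phi^-)$ at each height then guarantees a unique solution coordinate by coordinate.

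For the second statement, the key observation is that if $u_1 \in F\BU^1 \cap \BU$ then $u_1, F(u_1) \in F\BU^1 \subseteq \BG^1$, and using normality of $\BG^1$ in $\BG$ one obtains $\phi(u_1, u_2) \in \BG^1 \cdot u_2$. Combining with $\phi(u_1, u_2) \in F\BU$ and $\BG^1 \cap \BU = \BU^1$, this yields $\phi(u_1, u_2) \in \BU^1 (F\BU \cap \BU^-)$, so the restricted map has the claimed target; bijectivity onto this target then follows immediately from the first statement.

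The main obstacle is in the inductive step: one must check that the commutator contributions coming from already-determined coordinates of $u_1$, $F(u_1)$, and $u_2$ do not destroy the triangular structure of the system. This is where the Coxeter hypothesis becomes essential: writing $c = s_{\alpha_1}\cdots s_{\alpha_r}$ as a reduced expression as in \S\ref{subsec: sequence}, the set $c\Phi^+ \cap \Phi^-$ is explicitly the collection of Coxeter inversions $\{-s_{\alpha_1}\cdots s_{\alpha_{i-1}}(\alpha_i)\}_{i=1}^{r}$, an $r$-element subset whose structure allows the height filtration to be chosen compatibly with the decomposition into $\Phi^\pm$-parts, ensuring that the inductive equations separate cleanly at each level.
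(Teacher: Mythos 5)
You should first note that the paper itself contains no proof of Theorem \ref{cross-section}: it is imported verbatim from \cite{IvanovNie_24}, so there is no in-paper argument to compare against, and your proposal must stand on its own. It does not, because the main inductive step has a genuine gap. The difficulty of this statement is not the commutator terms (which your height filtration handles correctly, since commutators do land in higher height) but the Frobenius twist in $F(u_1)$. Write $A=F\Phi^+\cap\Phi^+$ (the support of $u_1$) and $B=F\Phi^+\cap\Phi^-$ (the support of $u_2$); then $F(u_1)$ is supported on $F(A)$, and for a root $\gamma\in A\cap F(A)=\Phi^+\cap F\Phi^+\cap F^2\Phi^+$ (a set which is nonempty as soon as $\sharp\Phi^+>2\ell(c)$) the $\gamma$-coordinate of $\phi(u_1,u_2)$ involves both $x_\gamma$ linearly and $x_{F^{-1}\gamma}^{q}$ through $F(u_1)$. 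The roots $\gamma$ and $F^{-1}\gamma$ need not have different heights, so at a fixed level of your filtration the system is not linear in the unknowns at that level: it is Artin--Schreier-like, and such equations (e.g.\ $x\mapsto x^q-x$ on $\BG_a$) are surjective but $q$-to-one, so ``a unique solution coordinate by coordinate'' does not follow. The triangularity that actually saves the argument is along $F$-orbits, not along height: $x_\gamma$ is determined by $x_{F^{-1}\gamma}$, which is determined by $x_{F^{-2}\gamma}$, and the chain terminates because $c$ is a ($\sigma$-)Coxeter element, hence $c\sigma$ is elliptic and $\bigcap_{n\ge 0}F^{n}\Phi^+=\emptyset$, so $F^{-n}\gamma$ eventually leaves $A$. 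Your appeal to the Coxeter hypothesis only describes $F\Phi^+\cap\Phi^-$ as the inversion set of $c\sigma$ and asserts compatibility with the height filtration; it does not supply this termination argument, which is the actual content of the theorem. (Two smaller points: the relevant action on roots is $F=c\sigma$ together with the affine shift by $\mu$, not $c$ alone, and in the deep-level setting each $\BU_\beta$ is a truncated loop group rather than a one-dimensional root group, so ``dimension $|c\Phi^+|$'' and the coordinate bookkeeping must be done with the integral filtrations attached to $e$.)

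For the second assertion, your normality argument only gives the containment $\phi\bigl((F\BU^1\cap\BU)\times(F\BU\cap\BU^-)\bigr)\subseteq \BG^1\cdot(F\BU\cap\BU^-)\cap F\BU$; identifying this with $\BU^1(F\BU\cap\BU^-)$, and proving surjectivity onto that set, requires the converse implication that if $\phi(u_1,u_2)\in\BU^1(F\BU\cap\BU^-)$ then $u_1\in F\BU^1$. That follows by reducing modulo $\BG^1$ and applying the level-$1$ case of the first isomorphism in the reductive quotient, but ``bijectivity follows immediately from the first statement'' skips this step.
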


For $i \in \BZ$ and $w \in W_e$ we define \[{}^\flat \Sigma_w^i = \{(x, x', y) \in (F\BU \cap \BU^-) \times F^i(F\BU \cap \BU^-) \times (\BB \dw \BG^1 F^i\BB); x F(y) = y x'\}.\]
\begin{lemma} \label{flat}
    The map $(x, x', y) \mapsto (x_2, x_2', x_1 y F^i(x_1')\i, x_1, x_1')$ gives an $\BT^F \times \BT^F$-equivariant isomorphism \[\Sigma_w^i \cong {}^\flat\Sigma_w^i \times (F\BU \cap \BU) \times (F\BU \cap \BU),\] where $(x_1, x_2) = \phi\i(x)$ and $(x_1', x_2') = \phi\i(x')$. In particular, $H_c^*(\Sigma_w^i) \cong H_c^*({}^\flat\Sigma_w^i)$.
\end{lemma}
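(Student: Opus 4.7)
The plan is to make a change of variables using the cross-section $\phi$ from Theorem~\ref{cross-section}. Explicitly, given $x \in F\BU$ I write $x = x_1^{-1} x_2 F(x_1)$ with $(x_1, x_2) \in (F\BU \cap \BU) \times (F\BU \cap \BU^-)$; and for $x' \in F^{i+1}\BU$, applying the $F^i$-shift of the same theorem, I write $F^{-i}(x') = x_1'^{-1} x_2' F(x_1')$ with $(x_1', x_2') \in (F\BU \cap \BU) \times (F\BU \cap \BU^-)$, so that $x' = F^i(x_1')^{-1} F^i(x_2') F^{i+1}(x_1')$. Substituting these decompositions into $xF(y) = yx'$ and multiplying on the left by $x_1$ and on the right by $F^{i+1}(x_1')^{-1}$ yields
\begin{equation*}
  x_2 \, F\bigl(x_1 y F^i(x_1')^{-1}\bigr) \;=\; \bigl(x_1 y F^i(x_1')^{-1}\bigr) \, F^i(x_2').
\end{equation*}
With $\tilde y := x_1 y F^i(x_1')^{-1}$ this is precisely the defining equation of ${}^\flat\Sigma^i$ in the triple $(x_2, F^i(x_2'), \tilde y)$. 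The resulting morphism
\[
   (x, x', y) \;\longmapsto\; \bigl(x_2,\, F^i(x_2'),\, \tilde y,\, x_1,\, x_1'\bigr)
\]
from $\Sigma^i$ to ${}^\flat\Sigma^i \times (F\BU \cap \BU) \times (F\BU \cap \BU)$ has an obvious inverse: reconstruct $y = x_1^{-1} \tilde y F^i(x_1')$, and rebuild $x$, $x'$ via $\phi$ and its $F^i$-twist.

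Next I verify that this isomorphism respects the stratification. Since $x_1 \in \BU$ and $F^i(x_1') \in F^i\BU$, passing from $y$ to $\tilde y$ multiplies on the left by an element of $\BU$ and on the right by an element of $F^i\BU$, so the condition $y \in \BU w \BT \BG^1 F^i\BU$ defining $\Sigma_w^i$ is preserved, and the morphism restricts to an isomorphism $\Sigma_w^i \cong {}^\flat\Sigma_w^i \times (F\BU \cap \BU) \times (F\BU \cap \BU)$. Equivariance for the $\BT^F \times \BT^F$-action is immediate: $\BT^F$ normalises $\BU$, $\BU^-$ and $F\BU$ and commutes with $F$, so conjugation preserves each factor of $\phi$, and for $(t,t') \in \BT^F \times \BT^F$ one computes $(t,t') \cdot \tilde y = tx_1t^{-1} \cdot tyt'^{-1} \cdot t'F^i(x_1')^{-1}t'^{-1} = t \tilde y t'^{-1}$ using $F^i(t)=t$ and $F^i(t')=t'$.

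Finally, the last assertion follows by K\"unneth: since $F\BU \cap \BU$ is a (perfect) affine space, its compactly supported $\ell$-adic cohomology is concentrated in top degree and one-dimensional, so $H_c^*(\Sigma_w^i)$ and $H_c^*({}^\flat\Sigma_w^i)$ agree as virtual $\BT^F \times \BT^F$-modules up to an overall shift and Tate twist, which is absorbed in the notation.

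The only nontrivial aspect of this proof is bookkeeping. Because $x'$ lives in $F^{i+1}\BU$, one must apply the $F^i$-twisted version of $\phi$ to decompose it, and the precise form of the substitution $\tilde y = x_1 y F^i(x_1')^{-1}$ is dictated by this twist: the $F^i$ on $x_1'$ is exactly what makes the two sides of the rewritten equation separate into an identity in $(x_2, F^i(x_2'), \tilde y)$ alone. Once these powers of $F$ are tracked correctly, everything else is formal.
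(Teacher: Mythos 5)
Your proof is correct and follows exactly the route the paper intends (the paper's own proof is just the one-line ``follows by definition and Theorem~\ref{cross-section}''): decompose $x$ via $\phi$ and $x'$ via the $F^i$-twist of $\phi$, absorb $x_1$ and $F^i(x_1')$ into $y$, and check that the equation, the stratum condition and the $\BT^F\times\BT^F$-action all transport correctly. Your bookkeeping of the powers of $F$ (in particular placing $F^i(x_2')$ in the second slot, matching the factor $F^i(F\BU\cap\BU^-)$ in the definition of ${}^\flat\Sigma_w^i$) and the K\"unneth step with the even-degree shift are all as they should be.
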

\begin{proof}
    It follows by definition and Theorem \ref{cross-section}.
\end{proof}

\begin{lemma}\label{indetity-case}
   Suppose that $c$ satisfies condition \eqref{eq:ast}. Let $w = (c\s)^m \in W_e^{c\s}$ for some $m \in \BZ$. Then we have $H_*(\Sigma_w^{N-m}) = H_*(\dw \BT^F) = H_*({}^\flat\Sigma_w^{2N-m}) = H_*(\Sigma_w^{2N-m})$. 
\end{lemma}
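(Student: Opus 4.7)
The plan begins by peeling off the identities that follow directly from Lemma \ref{flat}: that lemma exhibits $\Sigma_w^i$ as a trivial bundle over ${}^\flat\Sigma_w^i$ with fibre $(F\BU\cap\BU)^2$, so $H_*(\Sigma_w^i) = H_*({}^\flat\Sigma_w^i)$ for every $i$. This immediately delivers the last equality $H_*({}^\flat\Sigma_w^{2N-m}) = H_*(\Sigma_w^{2N-m})$ and reduces the remaining content to proving
\[H_*({}^\flat\Sigma_w^i) \;\cong\; H_*(\dw\BT^F) \qquad \text{for } i \in \{N-m,\ 2N-m\}.\]

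I would argue via the hat-space parametrization. After lifting to the preimage of ${}^\flat\Sigma_w^i$ in $\hat\Sigma_w^i$ (still an affine bundle by the construction of $\pi_w^i$), points become tuples $(\tx,\tx',y_1,\t,z,y_2)$ with $\t\in\dw\BT$ satisfying $\tx F(\t z) = y_1\t z y_2 \tx'$. Using $w=(c\sigma)^m$ and condition \eqref{eq:ast}, one has $w(c\sigma)^{N-m} = w_0\sigma^N$, and since $\sigma$ fixes $w_0$ and $w_0^2=1$, one has $w(c\sigma)^{2N-m} = \sigma^{2N}$. Conjugating the defining equation by $\t^{-1}$ and expanding in a chosen convex order on $\Phi^+$ then produces a system of equations indexed by roots.

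The crucial point is that the permutation induced on $\Phi^+$ by conjugation through the $F(\t)$-factor is controlled by $w(c\sigma)^i$: for $i=N-m$ it sends $\Phi^+$ to $-\Phi^+$ (up to $\sigma^N$), so the positive-root equations (from $y_1,z,y_2$) and the negative-root equations (from $\tx\in F\BU\cap\BU^-$ and $\tx'$ on the flat side) decouple completely; for $i=2N-m$ the shift is by $\sigma^{2N}$, i.e.\ the identity on $W$, but now the filtration level $F^{2N-m}$ of $\tx'$ and the deep factor $z\in\BK_{w,i}^1$ have been pushed sufficiently deep that the negative-root constraints still separate from the positive-root ones. In both cases a triangular change of variables solves uniquely for $\tx,\tx',y_1,z,y_2$ in terms of $\t$, exhibiting ${}^\flat\Sigma_w^i$ as an affine bundle over the Lang locus $\{\t\in\dw\BT\colon F(\t)=\t\}=\dw\BT^F$; since cohomology is invariant under affine bundles, this yields the claimed identity.

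The main obstacle will be the bookkeeping of the root-by-root analysis: one must carefully track at which level $F^j\BU$ each root-subgroup contribution appears on each side of the fundamental equation, and verify that \eqref{eq:ast} is precisely what makes the resulting system triangular so that unique solvability holds. A secondary subtlety is that the truncation level $r$ is arbitrary, so the argument must be uniform in $r$; this is most cleanly handled by inducting along the filtration and reducing each inductive step to a single root subgroup, where the condition $q>M$ enters (via Lemma \ref{non-zero}) to annihilate any residual torus-action fixed loci outside $\dw\BT^F$.
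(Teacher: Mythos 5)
Your opening reduction is fine: Lemma \ref{flat} gives $H_*(\Sigma_w^i)\cong H_*({}^\flat\Sigma_w^i)$ for all $i$, which disposes of the last equality, and for $H_*(\Sigma_w^{N-m})=H_*(\dw\BT^F)$ the paper simply quotes \cite{DI}. The genuine gap is in your treatment of $i=2N-m$. You claim that a triangular change of variables solves \emph{uniquely} for all unipotent coordinates in terms of $\t$ and thereby exhibits ${}^\flat\Sigma_w^{2N-m}$ as an affine bundle over the Lang locus $\dw\BT^F$. That structural claim is false. After the changes of variables (which rest on applying the cross-section isomorphism of Theorem \ref{cross-section} twice — an input your sketch never invokes), what remains is
\[
{}^\flat\Sigma_w^{2N-m}\;\cong\;\bigl\{(\t,z_2,z_4)\in\BT\times(F\BU\cap\BU^-)\times(F\BU^{-,1}\cap\BU)\ :\ L(\t)=\pr_0\bigl({}^{F(\t)\i}z_2\,z_4\bigr)\bigr\},
\]
and this locus contains many points with $L(\t)\neq 1$, i.e.\ with $\t\notin\dw\BT^F$: the unipotent variables are not determined by $\t$, and the defining equation does not force $F(\t)=\t$. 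The mechanism that actually finishes the proof — and which is missing from your argument — is an auxiliary action of the \emph{full} torus $\BT$ by simultaneous conjugation on $(z_2,z_4)$, commuting with the $\BT^F\times\BT^F$-action; since every weight occurring in $F\BU\cap\BU^-$ and $F\BU^{-,1}\cap\BU$ is a nontrivial root, the fixed locus is exactly $\{z_2=z_4=1,\ L(\t)=1\}=\dw\BT^F$, and one concludes by the fixed-point localization $H_*(Y)=H_*(Y^{\BT})$, not by an affine fibration.

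Two secondary points. First, you place condition \eqref{eq:M} and Lemma \ref{non-zero} inside this lemma to ``annihilate residual fixed loci''; neither is needed here, precisely because the full torus $\BT$ acts — genericity of a one-parameter subgroup is only required in Proposition \ref{vanishing}, where the acting group is the $\BG_m$ of Lemma \ref{morphism} sitting inside $H_{w,\chi}$. Second, your ``convex order / root-by-root'' decoupling at arbitrary truncation level $r$ is exactly the content of the product decompositions $\BU\times(F\BU\cap\BU^-)\cong\BU(F\BU\cap\BU)$ and $\BU^{-,1}\times(F\BU\cap\BU^-)\cong(F\BU\cap\BU^-)F\BU^{-,1}$ from Theorem \ref{cross-section}; without citing (or reproving) that statement the change of variables is not justified.
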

\begin{proof}
    The first equality is proved in \cite{DI}. We show the last two equalities. Let $(x, x', y) \in {}^\flat\Sigma_w^{2N-m}$. By definition, \[y \in  \BG^1 \BB \dw (c\s)^{2N-m} \BB (c\s)^{m-2N} = \BU \BT \BU^{-, 1} \dw.\] So we may write $y = y_1 \t y_2 w$ uniquely with $y_1 \in \BU$, $\t \in \BT$ and $y_2 \in \BU^{-, 1}$.

Then the equality $x F(y) = y x'$ is equivalent to \[\t\i y_1\i x F(y_1) F(\t) = y_2 \dw x' \dw\i F(y_2\i) = y_2 x'' F(y_2\i),\] where $x'' = {}^\dw x' \in F\BU \cap \BU^-$ since $w = (c\s)^m$.

By Theorem \ref{cross-section}, the map $(g_1, g_2) \mapsto g_1\i g_2 F(g_1)$ gives isomorphisms  \begin{align*}\BU \times (F\BU \cap \BU^-) &\cong \BU (F\BU \cap \BU); \\ \BU^{-, 1} \times (F\BU \cap \BU^-) & \cong (F\BU \cap \BU^-) F\BU^{-, 1}. \end{align*} So we can make changes of variables $(x, x'', y_1, y_2) \mapsto (z_1, z_2, z_3, z_4)$, where \[(z_1, z_2, z_3, z_4) \in \BU \times F\BU \cap \BU^- \times \BU^{-,1} (F\BU \cap \BU^-) \times F\BU^{-, 1} \cap \BU\] such that $y_1\i x F(y_1) = z_1 z_2$ and $y_2 x'' F(y_2)\i = z_3 z_4$. Then we have \[\t\i z_1 z_2 F(\t) = {}^{\t\i}z_1 L(\t) {}^{F(\t)\i}z_2  = z_3 z_4,\] where $L(\t) = \t\i F(\t)$. As $z_4 \in \BU^1$ we can have \[{}^{F(\t)\i}z_2 z_4\i = h_+ h_0 h_- \in \BU \BT \BU^-,\] where $h_+ \in \BU^1$, $h_0 \in \BT^1$ and $h_- \in (F\BU \cap \BU^-) \BU^{-, 1} = F(\BU \BU^{-, 1}) \cap \BU^-$. Hence \[{}^{\t\i}z_1 {}^{L(\t)} h_+ L(\t) h_0 h_- = z_3.\] It follows that $z_1 = {}^{F(\t)}h_+\i$, $L(\t) = h_0\i$ and $z_3 = h_-$. Therefore, \[{}^\flat\Sigma_w^{2N-m} = \{(\t, z_2, z_4) \in \BT \times (F\BU \cap \BU^-) \times (F\BU^{-, 1} \cap \BU) ; L(\t) = \pr_0({}^{F(\t)\i} z_2 z_4\i)\},\] where $\pr_0: \BU^1 \BT \BU^- \to \BT$ is the natural projection. 

Note that $(t, t') \in \BT^F \times \BT^F$ acts on ${}^\flat\Sigma_w^i$ by $(\t, z_2, z_4) \mapsto (t \t w(t')\i, {}^t z_2, {}^{w(t')} z_4)$. Now we define and action of $s \in \BT$ on ${}^\flat\Sigma_w^i$ by $(\t, z_2, z_4) \mapsto (\t, {}^s z_2, {}^s z_4)$. Then the actions of $\BT$ and $\BT^F \times \BT^F$ commutes with each other. Thus, by Lemma \ref{flat} we have \[H_*(\Sigma_w^{2N-m}) = H_*({}^\flat \Sigma_w^{2N-m}) = H_*(({}^\flat\Sigma_w^{2N-m})^{\BT}) = H_*(\dw\BT^F)\] as desired.   
\end{proof}

\begin{proposition}\label{prop:final_prop}
    Suppose that Condition \eqref{eq:ast} holds and that $\Delta$ is connected. Then $H_c^*(\Sigma_w^0) = H_c^*(\dw \BT^F)$ for $w \in W_e^{c\sigma}$.
\end{proposition}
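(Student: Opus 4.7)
The plan is to iterate Corollary \ref{sum} and Lemma \ref{nonempty} in order to propagate the identification furnished by Lemma \ref{indetity-case} back to $i=0$.

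First, since $c\s$ is a $\s_I$-Coxeter element of $W_e$, the centralizer $W_e^{c\s}$ is the cyclic group generated by $c\s$, so I write $w = (c\s)^m$ for some $m$. Lemma \ref{indetity-case} then identifies $H_*(\Sigma_w^{N-m}) = H_*(\dw \BT^F)$, reducing the proposition to the step identity $H_*(\Sigma_w^i) = H_*(\Sigma_w^{i+1})$ for $i = 0, 1, \ldots, N-m-1$. To prove this step identity at a given $i$, I apply Corollary \ref{sum} in its two forms:
\[
H_*(\Sigma_w^i) = \sum_{u \in W_e^{c\s}} H_*(Y_{w,u}^i), \qquad H_*(\Sigma_w^{i+1}) = \sum_{u \in W_e^{c\s}} H_*(Y_{u,w}^i),
\]
where the second identity uses the isomorphism $\a_i: Y_{u,w}^i \cong Z_{u,w}^{i+1}$. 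By Lemma \ref{nonempty}, the summands with $u \neq w$ vanish on the first side whenever $\s_I \neq 1$ or $w c_I^i \neq w_e$, and on the second side for every $u$ satisfying $\s_I \neq 1$ or $u c_I^i \neq w_e$. When both collapses apply, each sum reduces to the common term $H_*(Y_{w,w}^i)$ and the step identity follows.

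In Case $\s_I \neq 1$ the hypotheses are trivially satisfied and the chain from $0$ to $N-m$ runs through without obstruction. In Case $\s_I = 1$, the first sum collapses throughout the range $i \in [0, N-m-1]$ (since $w c_I^i = c_I^{m+i} \neq w_e$ in that range), but the second sum may carry an additional contribution $H_*(Y_{c_I^{N-i}, w}^i)$ coming from the unique exceptional element $u_0 = c_I^{N-i} \neq w$ at which $u_0 c_I^i = w_e$.

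The main obstacle is therefore to establish the vanishing $H_*(Y_{c_I^{N-i}, w}^i) = 0$ in Case $\s_I = 1$ for $i \in [0, N-m-1]$. I plan to tackle this by mimicking the argument of Proposition \ref{vanishing}: choose a cocharacter $\chi$ centralizing $K_{c_I^{N-i}, i}$, build the induced $\BG_m$-action on $\hat Y_{c_I^{N-i}, w}^i$ via Lemmas \ref{morphism} and \ref{piece}, and invoke condition \eqref{eq:M} together with Lemma \ref{non-zero} to conclude that the fixed locus is empty. This step is the most delicate, since both $c_I^{N-i}$ and $w$ lie in $W_e^{c\s}$ and Proposition \ref{vanishing} does not apply off the shelf; the cocharacter must be adapted to the specific geometry of $Y_{c_I^{N-i}, w}^i$. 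Should this term-by-term vanishing fail, the fallback is to exploit the second reference point $i = 2N-m$ from Lemma \ref{indetity-case} and chain from that side, using the fact that the indices at which the exceptional $u_0$ coincides with $w$ differ between the two chains.
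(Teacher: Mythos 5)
Your skeleton is the same as the paper's: anchor at $i=N-m$ via Lemma \ref{indetity-case} and propagate back to $i=0$ using the two decompositions in Corollary \ref{sum} together with Lemma \ref{nonempty}; the case $\s_I\neq 1$ is handled identically. You have also correctly isolated the one genuinely delicate point, namely the possible extra summand $H_*(Y_{u_0,w}^i)$ with $u_0=c_I^{N-i}$ when $\s_I=1$ (a point the paper's own write-up passes over very quickly). However, your primary plan for killing this term would fail. The $\BG_m$-fixed-point mechanism of Proposition \ref{vanishing} is powered by Lemma \ref{unstable} and Corollary \ref{proper}, which produce a proper $\s$-stable $J\subsetneq\D$ with $u(c\s)^i\in w_0W_J\s^i$ \emph{precisely because} $u\notin W_e^{c\s}$; this is what places $\BK_{u,i}$ inside a conjugate of a proper Levi and yields a non-central cocharacter centralizing it. For $u_0\in W_e^{c\s}$ no such $J$ exists (already for $u_0=1$, $i=0$ one has $\BK_{u_0,i}=\BU^-$, centralized only by central cocharacters), so no admissible $\chi$ is available --- and of course the strata indexed by $W_e^{c\s}$ are exactly those whose cohomology must not vanish, so no choice of contracting action can do the job.

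The correct repair is your fallback, but it must be executed, and the mechanism is sharper than ``the exceptional indices differ between the two chains.'' Two observations are needed. First, the \emph{proof} of Lemma \ref{nonempty} (the case $v=1$ there) shows that a nonempty $Y_{u,u'}^i$ with $u\neq u'$ forces not only $uc_I^i=w_e$ but also $u'=uc_I^{-1}$; hence in your chain the exceptional summand occurs at the single step $i=N-m-1$ only, where it equals $H_*(Y_{w',w'c_I^{-1}}^{\,N-m'})$ with $w'=c_I^{m'}$, $m'=m+1$. Second, for this $w'$ the second decomposition is clean at every step $N-m'\le i\le 2N-m'-1$ (there $w'c_I^{i+1}=w_e$ never holds), so $H_*(\Sigma_{w'}^{N-m'+1})=\cdots=H_*(\Sigma_{w'}^{2N-m'})=H_*(\dot w'\BT^F)$ by the second anchor of Lemma \ref{indetity-case}; comparing $H_*(\Sigma_{w'}^{N-m'})=H_*(Y_{w',w'}^{N-m'})+H_*(Y_{w',w'c_I^{-1}}^{\,N-m'})$ with $H_*(\Sigma_{w'}^{N-m'+1})=H_*(Y_{w',w'}^{N-m'})$ and with the first anchor $H_*(\Sigma_{w'}^{N-m'})=H_*(\dot w'\BT^F)$ forces $H_*(Y_{w',w'c_I^{-1}}^{\,N-m'})=0$, which is exactly the term obstructing your last step. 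With this supplement your argument closes; without it, the vanishing of the exceptional term is asserted but not proved.
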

\begin{proof}
    Let $w \in W_e^{c\s}$. As $\Delta$ is connected, we may write $w = (c\s)^m$ for some $m \in \BZ$. By Corollary \ref{sum} we have \[\tag{a} H_*(\Sigma_w^i) = \sum_{u \in W_e^{c\s}} H_*(Y_{w, u}^i), \quad  H_*(\Sigma_w^{i+1}) = \sum_{u \in W_e^{c\s}} H_*(Y_{u, w}^i).\]

    First we assume $\s_I \neq 1$. By Lemma \ref{nonempty} for any $w', u' \in W_e^{c\sigma}$ we have $Y_{w', u'}^i \neq \emptyset$ if and only if $w' = u'$. It follows by (a) that \[H_*(\Sigma_w^i) = H_*(Y_{w, w}^i) = H_*(\Sigma_w^{i+1}).\] By Lemma \ref{indetity-case} we have $H_*(\Sigma_w^0) = H_*(\Sigma_w^{N-m}) = H_*(w \BT^F)$ as desired.

    Now we assume $\s_I = 1$. Let notation be as in Lemma \ref{ast}. We can assume that $w = c_I^m$ with $0 \le m \le 2N-1$. If $0 \le m \le N$, it follows from (a), Lemma \ref{nonempty} and Lemma \ref{indetity-case} that \[H_*(\Sigma_w^0) = H_*(\Sigma_w^1) = \cdots = H_*(\Sigma_w^{N-m}) = H_*(\dw \BT^F).\] If $N+1 \le m \le 2N-1$, similarly we have \[H_*(\Sigma_w^0) = H_*(\Sigma_w^1) = \cdots = H_*(\Sigma_w^{2N-m}) = H_*(\dw \BT^F).\] So the statement follows.     
\end{proof}

\section{Proof of Theorem \ref{Coxeter}}\label{sec:proof_Coxeter}
In this section, we fill in the proof for Theorem \ref{Coxeter}. First we show that it suffices to consider one particular Coxeter element.
\begin{lemma}\label{cyclic}
    Let $\a \in \{\a_1, \s\i(\a_r)\}$ such that $c' = s_\a c \s(s_\a)$. Suppose Theorem \ref{Coxeter} holds for $(\mu, c)$. Then it also holds for $(s_{\a}(\mu), c')$.
\end{lemma}
\begin{proof}
Let $\mu, e, I$ be as in Theorem \ref{Coxeter}. Let $e' = 
 e_{s_\a(\mu), c'} = s_\a(e)$ and $\Phi_{e'} = s_\a(\Phi_e)$. Assume that $I= (i_1 < \cdots < i_m)$. Without loss of generality we can assume $\a = \s\i(\a_r)$ and $c' = s_{\a_1'} s_{\a_2'} \cdots s_{\a_r'}$ with $\a_1' = \a_r$ and $\a_i' = \a_{i-1}$ for $2 \le i \le r$. 

First we assume that $r \in I$. Then $r = i_m$ and $\s_{I, c}(\a) < 0$, which means that $\a \notin \D_e = \s_{I, c}(\D_e)$. Thus $\Phi_{e'}^+ = s_{\a}(\Phi_e^+)$ since $\a \in \D$ is a simple root. In particular, $\D_{e'} = s_\a(\D_e)$. We take \[I' = (1< i_1+1 < i_2+1 < \cdots < i_{m-1}+1).\] Then $\s_{I', c'} = s_\a \s_{I, c} s_\a$, $c'_{I'} = s_\a c_I s_\a$ and the statement follows.

Now we assume that $r \notin I$. Then $\s_{I, c}(\a) \in \D_{I, c} \subseteq \D_e = \s_{I, c}(\D_e)$. Thus $\a \in \D_e$ and $\D_{e'} = \D_e$. We take \[I' = (i_1+1 < i_2+1 < \cdots < i_m+1).\] Then $\s_{I', c'} = \s_{I, c}$, $c_{I'}' = s_\a c_{I} \s_{I, c}(s_\a)$ and the statement also follows.    
\end{proof}

To finish the proof, we will take a particular $\s$-Coxeter element $c$ such that and verify the statement directly. Moreover, we can assume $\D$ is connected. 

Let $P$ be the coweight lattice of $\Phi$. If $\mu = 0 \in P / (1 - c\s) P$, then $\D_e = \D$ and the statement is trivial. So we may assume that $P / (1-c)P \neq \{0\}$, which excludes the types ${}^2 A_{n-1}$ ($n$ odd), $^2 D_n$, $^3D_4$, $E_8$, $^2E_6$, $F_4$, $G_2$. Then we will take a case-by-case analysis for the remaining types.

We adopt the labelling of Dynkin diagrams by positive integers as in \cite{Hu}. For $i \in \BZ_{\ge 1}$. let $s_i$ and $\o_i^\vee$ denote the corresponding simple reflection and fundamental coweight, respectively.

\

Case (1): $\D$ is of type $A_{n-1}$. Take $c = s_1 s_2 \cdots s_{n-1}$. Then we have $P/(1 - c\s)P = \{0, \o_1^\vee, \o_2^\vee, \dots \o_{n-1}^\vee\}$. Assume $\mu = \o_k^\vee$ with $k \in \BZ$. Let $m = {\rm{gcd}}(k, n) \in \BZ_{\ge 1}$. Then we take $I$ to be the complement of the sequence $I' = (n/m, 2n/m, \cdots, (m-1)n/m)$.

\

Case (2): $\D$ is of type ${}^2 A_{n-1}$ with $n \ge 4$ even. Take $c = s_1 s_2 \cdots s_{n/2}$. Then $P/(1 - c\s)P = \{0, \o_1^\vee\}$. Assume $\mu = \o_1^\vee$. Then we take $I = (n/2)$.

\

Case (3): $\D$ is of type $B_n$ with $n \ge 2$. Take $c = s_1 s_2 \cdots s_n$. Then $P/(1 - c\s)P = \{0, \o_1^\vee\}$. Assume $\mu = \o_1^\vee$. Then we take $I = (n)$.

\

Case (4): $\D$ is of type $C_n$ with $n \ge 3$. Take $c = s_1 s_2 \cdots s_n$. Then $P/(1 - c\s)P = \{0, \o_n^\vee\}$. Assume $\mu = \o_n^\vee$. Then we take $I = (1, 3, \dots, n - \frac{(-1)^n + 1}{2})$.

\

Case (5): $\D$ is of type $D_n$ with $n \ge 4$. Take $c = s_1 s_2 \cdots s_n$. Then $P/(1 - c\s)P = \{0, \o_1^\vee, \o_{n-1}^\vee, \o_n^\vee\}$. If $\mu = \o_1^\vee$, take $I = (n-1, n)$. It remains to handle the case $\mu = \o_{n-1}^\vee$ by symmetry. If $n$ is even, take $I = (1, 3, \dots, n-3, 4)$ if $4 \mid n$ and $I = (1, 3, \dots, n-3, n-1)$ if $4 \nmid n$. If $n$ is odd, take $I= (1, 3, \dots, n-4, \dots, n-2, n)$. 

\

Case (6): $\D$ is of type $E_6$. Take $c = s_1 s_3 s_4 s_2 s_5 s_6$. Then $P/(1 - c\s)P = \{0, \o_1^\vee, \o_6^\vee\}$. By symmetry we can assume $\mu = \o_1^\vee$. Then take $I = (1, 3, 5, 6)$.

\

Case (7): $\D$ is of type $E_7$. Take $c = s_7 s_6 s_5 s_4 s_2 s_3 s_1$. Then $P/(1 - c\s)P = \{0, \o_7^\vee\}$. If $\mu = \o_7^\vee$, take $I = (7, 5, 2)$.

\section{Quotients of the Coxeter variety}\label{sec:quotient_Cox_var}

%In this and the following section we study the quotients of various $p$-adic Deligne--Lusztig spaces of Coxeter type modulo unipotent radicals of some Borel (resp. parabolic) subgroups. We apply this at the end of \S\ref{} to show that the smooth $G(k)$-representation ${\rm cInd}_{Z\CG(\CO_k)}^{G(k)} \pm H_c^\ast(X,\overline\BQ_\ell)[\theta]$ is (admissible and) a finite direct sum of irreducible supercuspidal representations.

The goal of the rest of the article is to prove Theorem \ref{thm:cusp}. Therefore, mainly following \cite[\S2]{Lusztig_76_Inv}, we investigate quotients of $p$-adic Deligne--Lusztig spaces of Coxeter type by the unipotent radical of a rational Borel subgroup resp. of a maximal parabolic subgroup. We apply this at the end of \S\ref{sec:quots_finite_level} to deduce a proof of Theorem \ref{thm:cusp}.

\subsection{Notation}

We keep the notation from the introduction and \S\ref{sec:notation}, except for the following important change: \emph{from now on we assume that $G$ is unramified and that the Borel subgroup $B \subseteq G$ is $k$-rational}. We denote by $w_0 \in W$ the longest element (relative to $S$). If $v \in W$ is given, then by $\dot v$ we mean an arbitrary lift of $v$ to $N_G(T)(\breve k)$.

For $b\in G(\breve k)$ and a subgroup $H \subseteq G$ we denote by $H_b(k)$ the $F$-centralizer of $b$ in $H(\breve k)$, that is $H_b(k) = \{h \in H \colon h^{-1}bF(h) = b\}$.

We use the setup from \cite{Ivanov_DL_indrep}. In particular, we denote by ${\rm Perf}$ the category of perfect $\overline\BF_q$-algebras. For a $\breve k$-scheme $X$ we write $LX$ for the loop space of $X$, i.e., the functor $LX \colon {\rm Perf} \to {\rm Sets}$, $R \mapsto X(\BW(R)[\varpi^{-1}])$, where $\BW(R)$ is the unique $\varpi$-adically complete and separated $\CO_k$-algebra in which $\varpi$ is not a zero divisor and which satisfies $\BW(R)/\varpi\BW(R) = R$ (see \cite[page 6]{Ivanov_DL_indrep} for details).

%Recall the $p$-adic Deligne-Lusztig spaces $X_w(b)$, $\dot X_{\dot w}(b)$ from \cite[Definition 7.3]{Ivanov_DL_indrep}. The action of the $p$-adic group $G_b(k)$ (and its subgroups) on $X_w(b)$ (and other presheaves on ${\rm Perf}_{\overline\BF_q}$) are continuous in the sense of \cite[Lemma 7.7]{Ivanov_DL_indrep}, but for simplicity of notation we will write $G_b(k)$ instead of $\underline{G_b(k)}$ to mean the functor represented by the $G_b(k)$.

\subsection{Recollections on $p$-adic Deligne--Lusztig spaces}\label{sec:recollections_pDL}
To $w\in W$ and $b \in G(\breve k)$, \cite[Definition 7.3]{Ivanov_DL_indrep} attaches a $p$-adic Deligne--Lusztig space $X_w(b)$ equipped with a continuous $G_b(k)$-action ($G_b(k)$  is locally profinite and equals the group of $k$-points of an inner form of a Levi subgroup of $G$). The definition of $X_w(b)$ parallels the classical Deligne--Lusztig variety from \cite{DeligneL_76}. Formally, $X_w(b)$ is an arc-sheaf on the category of perfect $\overline\BF_q$-algebras; it is known to be ind-representatble in many cases. Moreover, for a lift $\dot w\in N_G(T)(\breve k)$ one has a pro-\'etale $T_w(k)$-torsor $\dot X_{\dot w}(b)$ over a clopen subset of $X_w(b)$ \cite[\S10]{Ivanov_DL_indrep}, where $T_w$ is the $\breve k$-split form of the torus $T$ given by twisting the Frobenius $F$ by $\Ad(w)$. 

%In this section we take $w$ to be a $\sigma$-Coxeter element of a rational Levi subgroup of $G$ and investigate the quotients of $X_w(b)$, $\dot X_{\dot w}(b)$ by $U_b(k) \subseteq G_b(k)$ and some of its subgroups, closely following Lusztig's approach for classical Deligne--Lusztig varieties \cite[\S2]{Lusztig_76_Inv}. 

To relate this with the previous part of the article, consider the case $w=c$ is a $\sigma$-Coxeter element and a lift $\dot c \in N_G(T)(\breve k)$. If $T' \subseteq B' \subseteq G$ is a $k$-rational torus unramified and of type $c$ and a Borel subgroup (rational over $\breve k$), such that $B',F(B')$ are in relative position $c$, then there is an $G_{\dot c}(k) \times T_c(k) \cong G_{\dot c}(k) \times T'(k)$-equivariant isomorphism $X_{T',U'} \cong \dot X_{\dot c}(\dot c)$, where $U'$ is the unipotent radical of $B'$. 
%Moreover, as was shown in \cite{Ivanov_Cox_orbits,Nie_23}, $X_{T',U'}$ (and hence also $\dot X_{\dot w}(b)$) admits a decomposition $\coprod_{\gamma \in G_{\dot c}(k)/\CG_{\dot c}(\CO_k)} \gamma X$, where $X = X_{T',U',\infty}$ is as \eqref{eq:X} (the object of study of the first part of the article). 

\subsection{Embedding into the big cell}\label{sec:pDL_embedding_into_big_cell_loop}

Let $s_1,s_2,\dots, s_n$ be a sequence of pairwise distinct elements in $S$. Let $w = s_1s_2\dots s_n \in W$. Let $\alpha_i \in \Delta \subseteq \Phi^+$ be the simple root corresponding to $s_i$. Fix an isomorphism $\psi_i \colon U_{\alpha_i} \cong \BG_a$ of the corresponding root subgroup with the additive group. We have the open subscheme $\BG_m\subseteq \BG_a$ and we put $U_{\alpha_i}^\ast := \psi_i^{-1}(\BG_m)$. 
% As ${\rm Aut}(G_a) = G_m$, $U_{\alpha_i}^\ast$ does not depend on the choice of $\psi$.

\begin{lemma}[Proposition 2.2 of \cite{Lusztig_76_Inv}]\label{lm:Lus2_2_loop}
Let $\tau \in T(\breve k)$. As locally closed subvarieties of $G$, we have 
\[
(\tau U^-) \cap B w B = \{ \tau v_1 v_2 \dots v_n \colon  v_i \in (U_{-\alpha_i})^\ast \text{ for $1\leq i \leq n$} \}.
\]
In particular, $U^- \cap B w B \cong \prod_{i=1}^n \BG_m$.
\end{lemma}
\begin{proof}
As $B w B$ is stable under left multiplication by $\tau$, we may assume that $\tau  = 1$. 
%the result for general $\tau$ follows from the special case $\tau = 1$. 
In this case the proof of \cite[Prop.~2.2]{Lusztig_76_Inv} for reductive groups over $\overline\BF_q$ carries over to the present situation, using the geometric Bruhat decomposition for the split group $G_{\breve k}$.
\end{proof}

\begin{lemma}[Lemma 2.3 of \cite{Lusztig_76_Inv}]\label{lm:Lus2_3}
Suppose any $\sigma$-orbit on $S$ contains at most one $s_i$. Fix an algebraically closed field $\mathfrak{f} \supseteq \overline\BF_q$, and let $\tilde{\mathfrak{f}} = \BW(\mathfrak{f})[1/\varpi]$. Let $v \in W$, such that
\[
\dot v^{-1} u F(\dot v) = B(\tilde{\mathfrak{f}}) \dot w B(\tilde{\mathfrak{f}}) = (B \dot w B)(\tilde{\mathfrak{f}}),
\]
for some $u \in U(\tilde{\mathfrak{f}})$. Then $F(v) = v$ and $v(\alpha_i) \in \Phi^-$ for $1\leq i \leq n$.
\end{lemma}
\begin{proof}
The proofs of \cite[Lemmas 2.3 and 2.4]{Lusztig_76_Inv} carry over \emph{verbatim}.
\end{proof}

For each $v \in W$, $U \cap {}^v U^- \to B v B/B$, $u \mapsto u\dot v B$ is an isomorphism of $\breve k$-schemes. In particular, $L(BvB/B) \cong L(U \cap {}^v U^-)$ is an ind-scheme.
% Moreover, if $\dot v \in G(\breve k)$ is an arbitrary lift, then $U \cap {}^v U^- \times B \to B v B$, $u,b \mapsto u\dot v b$ is an isomorphism of varieties, so $B v B$ is affine. Moreover, we have a cartesian diagram,
% \begin{equation}\label{eq:cart_diag_cells}
% \xymatrix{
% & L(B v B) \ar@{^(->}[r] \ar[d]& LG \ar[d] \\
% L(U \cap {}^v U^-) \ar[r]^{\cong} & L(B  v B/B) \ar@{^(->}[r] & L(G/B) 
% }
% \end{equation}
% which is induced by applying $L$ to the (obviously cartesian) diagram over $\breve k$. In particular, $L(B v B/B)$ is an ind-scheme. 
% If $v = w_0 \in W$ is the longest element, we get an isomorphism $LU \stackrel{\sim}{\to} L(B w_0 B/B)$, induced by $u \mapsto u\dot w_0$. 
We can now show the analogue of \cite[Cor.~2.5]{Lusztig_76_Inv}. 

\begin{proposition}\label{prop:Coxeter_DLspace_factors_big_cell_loop}   
Suppose $b \in T(\breve k)$, and $w$ is a $\sigma$-Coxeter element. The natural inclusion $X_w(b) \hookrightarrow L(G/B)$ factors through the big cell $L(B w_0 B/B) \subseteq L(G/B)$. 
\end{proposition}
\begin{proof}
Let $g \in X_w(b)(R) \subseteq L(G/B)(R)$. We must show that $g\in L(B \dot w_0 B/B)(R)$. By \cite[Corollary 8.4]{Ivanov_DL_indrep}, we may replace $R$ by an arc-cover. Hence, by \cite[Corollary 6.4]{Ivanov_DL_indrep} we may assume that $g$ lifts to some $\dot g \in LG(R)$. It suffices to show that $\dot g \colon \Spec R \to LG$ factors through $L(Bw_0B) \subseteq LG$. Since $LG, L(Bw_0B)$ are perfect (hence reduced) ind-schemes, it suffices to do this on geometric points. Hence we may assume $R = \mathfrak{f}$ is an algebrically closed field; let $\tilde{\mathfrak{f}} = \BW(\mathfrak{f})[1/\varpi]$. Note that we have $\dot g^{-1} b\sigma(\dot g) \in L(B w B)(\mathfrak{f}) = (BwB)(\tilde{\mathfrak{f}})$. We may argue as in the proof of \cite[Corollary~2.5]{Lusztig_76_Inv}: as $\tilde{\mathfrak{f}}$ is a field, and $G$ is  split over $\tilde{\mathfrak{f}}$, $G(\tilde{\mathfrak{f}})$ admits a Bruhat decomposition. Thus there is some $v \in W$ such that $\dot g = u \dot v \lambda$ for some $\lambda \in B(\tilde{\mathfrak{f}})$, $u \in U(\tilde{\mathfrak{f}})$. By the preceding paragraph we deduce $\dot v^{-1}u^{-1} b F(u) F(\dot v) = \dot g^{-1}b F(\dot g) \in (B \dot w B)(\tilde{\mathfrak{f}})$. By assumption $b \in T(\breve k)$, and we deduce $\dot v^{-1}u' F(\dot v) \in (B \dot w B)(\tilde{\mathfrak{f}})$, where $u' = (u^b)^{-1}F(u) \in U(\tilde{\mathfrak{f}})$. Thus we may apply Lemma \ref{lm:Lus2_3} (in the same way as in the proof of \cite[Corollary~2.5]{Lusztig_76_Inv}) to deduce that $v = w_0$. This shows our claim. 
% Finally, consider the map $\dot g$,
% \[
% \xymatrix{
% \Spec R \ar[r]^{\dot g} \ar@{.>}[rd] & LG \\
% & L(B \dot w_0 B) \ar@{^(->}[u]
% }
% \]
% It factors through the dotted map, since all three entries in this diagram are perfect, hence reduced, ind-schemes, and the factorization holds on geometric points by the above claim. The result now follows from this by passing to $L(B\dot w_0 B/B)$ in the diagram \eqref{eq:cart_diag_cells}.
\end{proof}

\subsection{$U_b(k)$-quotient of $X_w(b)$}

We now show the analogue of \cite[Theorem~2.6 and Corollary 2.7]{Lusztig_76_Inv}. \emph{Suppose $w = s_1\dots s_n$ is a $\sigma$-Coxeter element and let $b \in T(\breve k)$.} Consider the morphism
\[
U_{-w_0(\alpha_1)}^\ast \times U_{-w_0(\alpha_2)}^\ast \times \dots \times U_{-w_0(\alpha_n)}^\ast \to bU, \quad (v_i)_{i=1}^n \mapsto bv_1 v_2 \dots v_n,
\]
with image the locally closed subscheme $b\cdot \prod_{i=1}^n U_{-w_0(\alpha_i)}^\ast \subseteq bU$. The product depends on the order of the factors. Define $X_w(b)'$ by the Cartesian diagram
\begin{equation}\label{eq:def_Xwbprime_loop}
\xymatrix{
X_w(b)' \ar[r] \ar[d] & L\left(b \cdot \prod_{i=1}^n U_{-w_0(\alpha_i)}^\ast \ar[d]\right) \ar@{=}[r] & b \cdot \prod_{i=1}^n LU_{-w_0(\alpha_i)}^\ast \\
LU \ar[r]^{u \mapsto u^{-1}bF(u)} & b\cdot LU, 
}
\end{equation}
where the lower map is well-defined as $b$ normalizes $LU$. 
%Let $B_b$ (resp. $U_b$) denote the functorial $\sigma$-stabilizer of $b$ in $B$ (resp. $U$). 
Note that $B_b(k)$ acts on $X_w(b)'$ by left multiplication. 

\begin{lemma}\label{lm:LUrarbLU}
Let $b\in T(\breve k)$. Then $u \mapsto u^{-1} b F(u) \colon LU \to b\cdot LU$ is $U_b(k)$-torsor for the pro-\'etale topology. The upper map in \eqref{eq:def_Xwbprime_loop} is a pro-\'etale $U_b(k)$-torsor. Moreover, $U_b(\breve k)$ is the group generated by all $U_\alpha(\breve k)$ with $\alpha \in \Phi^+$, $\ord_{\varpi}(\alpha(b)) = 0$. 
\end{lemma}
\begin{proof}
The second claim follows from the first. For the first claim, it is enough to show surjectivity of ${\rm La}_b \colon LU \to LU$, $u \mapsto (u^b)^{-1} F(u)$ for the pro-\'etale topology. Let the \emph{hight} of a root $\alpha \in \Phi^+$ be the smallest integer ${\rm ht}(\alpha) \geq 1$, such that $\alpha$ can be written as a sum of ${\rm ht}(\alpha)$ simple roots. For $i\geq 1$, let $U_{\leq i}$ be the quotient of $LU$ by the subsheaf generated by all $LU_{\alpha}$ with $\alpha \in \Phi^+$, ${\rm ht}(\alpha) > i$. Let $U_{= i}$ be the subsheaf of $U_{\leq i}$ generated by $LU_\alpha$ with ${\rm ht}(\alpha) = i$. Then $U_{=i} = \ker(U_{\leq i} \twoheadrightarrow U_{\leq i-1})$ is central in $U_{\leq i}$. Using this and induction on $i$, it suffices to show that ${\rm La}_b$ induces a surjection $U_{=i} \to U_{=i}$. But $U_i \cong \prod_{\alpha \colon {\rm ht}(\alpha) = i} L\BG_a$, ${\rm La}_b$ stabilize all factors, and the result follows from Lemma \ref{lm:LGa_torsor_over_LGa} below.
\end{proof}

\begin{lemma}\label{lm:LGa_torsor_over_LGa}
Let $\beta \in \breve k^\times$. Consider the map ${\rm La}_{\beta,\varphi} \colon L\BG_a \to L\BG_a$, $x \mapsto \beta x - \varphi(x)$. If $\ord_\varpi(\beta) \neq 0$, ${\rm La}_{\beta,\varphi}$ is an isomorphism. If $\ord_\varpi(\beta) = 0$, ${\rm La}_{\beta,\varphi}$ is a pro-\'etale torsor under the locally profinite group $k$. 
\end{lemma}
\begin{proof}
It suffices to show that if $R \in {\rm Perf}_{\overline\BF_q}$ is strictly henselian, and $x \in \BW(R)[1/\varpi]$, then there exists an (unique if $\ord_\varpi(\beta) \neq 0$) element $y \in \BW(R)[1/\varpi]$ with $\beta y - \varphi(y) = x$. This reduces to an explicit computation, using the (uniquely determined) $\varpi$-adic expansions $x = \sum_i [x_i] \varpi^i$, $y = \sum_i [y_i] \varpi^i$ in $\BW(R)[1/\varpi]$. 
\end{proof}

% Any regular $[b]$ is $\sigma$-conjugate (in $G(\breve k)$ as well as $T(\breve k)$) to $\varpi^\lambda$ for some $\lambda \in X_{\ast}(T)$ with $\lambda$ regular.

%\begin{lemma}\label{lm:Xwbprime_torsor_over_bLU}
%Let $b \in T(\breve k)$. The upper map in \eqref{eq:def_Xwbprime_loop} is a pro-\'etale $U_b(k)$-torsor. 
%\end{lemma}
%\begin{proof}
%This follows directly from Lemma \ref{lm:LUrarbLU}.
%\end{proof}

\begin{proposition}\label{prop:Lus2_6}
Suppose that $b \in T(\breve k)$ and $w$ is a $\sigma$-Coxeter element. Then 
\[ 
X_w(b)' \stackrel{\sim}{\longrightarrow} X_w(b), \quad u \mapsto u\dot w_0 B
\]
is an $B_b(k)$-equivariant isomorphism. Moreover, it induces a $T(k)$-equivariant isomorphism
\[ 
X_w(b)/U_b(k) \cong \prod_{i=1}^n LU_{-w_0(\alpha_i)}^\ast. 
\] 
\end{proposition}
\begin{proof}
By Proposition \
\ref{prop:Coxeter_DLspace_factors_big_cell_loop}, we have the inclusion $X_w(b) \hookrightarrow L(B \dot w_0 B/B) \stackrel{\sim}{\leftarrow} LU$, where the second isomorphism is $u \mapsto u\dot w_0$. This realizes $X_w(b)$ as a subsheaf of $LU$.
% To show that it factors through $X_w(b)'$, we must show that its composition with $u \mapsto u^{-1}bF(u) \colon LU \to b\cdot LU$ factors through $L(b \cdot \prod_{i=1}^n U_{-w_0(\alpha_i)}^\ast \ar[d])$.
To show that it agrees with $X_w(b)'$, we compute for any $R \in {\rm Perf}_{\overline\BF_q}$ and any $u \in LU(R)$:
\begin{align*}
u\dot w_0 LB \in X_w(b)(R) &\Leftrightarrow \dot w_0^{-1} u^{-1} b F(u) F(\dot w_0) \in L(b^{w_0}\cdot U^- \times_{G} B \dot w B)(R) = \\ &\mbox{}\hspace{3.6cm}\stackrel{\text{Lm.~\ref{lm:Lus2_2_loop}}}{=} b^{w_0} \cdot \prod_{i=1}^n LU^\ast_{-\alpha_i}(R) \\
&\Leftrightarrow u^{-1}bF(u) \in \dot w_0 b^{w_0} \cdot \prod_{i=1}^n LU^\ast_{-\alpha_i}(R) \dot w_0^{-1} = b \cdot \prod_{i=1}^n LU^\ast_{-w_0(\alpha_i)}(R), 
\end{align*}
where we use that $L$ commutes with finite products.
% \begin{align*}
% u\dot w_0 \cdot LB \in X_w(b)(R) &\LRar \dot w_0^{-1} u^{-1} b F(u) F(\dot w_0) \in L(B \dot w B \cap b^{w_0}\cdot U^-)(R) \stackrel{\text{Lm.~\ref{lm:Lus2_2}}}{=} L\left(b^{w_0} \cdot \prod_{i=1}^R U^\ast_{-\alpha_i}\right)(R) \\
% &\LRar u^{-1}bF(u) \in \dot w_0 \left[ L\left(b^{w_0} \cdot \prod_{i=1}^R U^\ast_{-\alpha_i}\right)(R) \right] \dot w_0^{-1} = L\left( b \cdot \prod_{i=1}^R U^\ast_{-w_0(\alpha_i)}\right)(R) 
% \end{align*}
The $B_b(k)$-equivariance is immediate, and the last claim is immediate from \eqref{eq:def_Xwbprime_loop}.
\end{proof}

We mention the following special case of Proposition \ref{prop:Lus2_6}, which is new to the $p$-adic setting due to the presence of regular $\sigma$-conjugacy classes. Recall that a $\sigma$-conjugacy class $[b] \subseteq G(\breve k)$ is called \emph{regular}, if there is some $\mu \in X_\ast(T)$ with $[b] = [\varpi^\mu]$ and $\langle \alpha,\mu\rangle \neq 0$. If $b$ is regular, then $G_b = B_b = T$ and $U_b = 1$ (the latter also follows from Lemma \ref{lm:LUrarbLU}). 

\begin{corollary}
Assume $b \in T(\breve k)$ is regular. Then the map in \eqref{eq:def_Xwbprime_loop} induces a $G_b(k)=T(k)$-equivariant isomorphism $X_w(b) \cong \prod_{i=1}^n LU_{-w_0(\alpha_i)}^\ast$. 

In particular, $X_w(b)$ is a disjoint union of affine schemes, there is a $T(k)$-equivariant isomorphism $\pi_0(X_w(b)) \cong X_\ast(T_{\rm ad})$ and any connected component of $X_w(b)$ is isomorphic to $L^+\BG_m$.
\end{corollary}
\begin{proof}
This follows from Proposition \ref{prop:Lus2_6} as $U_b=1$ and $G_b = B_b = T$.
\end{proof}

\subsection{$U_b(k)$-quotient of $\dot X_{\dot w}(b)$}\label{quotient_description_covers_loop}

Now we deduce an analogue of Proposition \ref{prop:Lus2_6} for the spaces $\dot X_{\dot w}(b)$. Let $\widetilde W = N_G(T)(\breve k)/T(\CO_{\breve k})$ be the extended affine Weyl group and let 
\[
F_w = \text{fiber of $N_G(T) \to W$ over $w$} \quad \text{ and } \quad \overline F_w = \text{fiber of $\widetilde W \to W$ over $w$}
\]
We regard $F_w$ as a trivial $T$-torsor over $\breve k$ and $\overline F_w$ as a trivial $X_\ast(T)$-torsor over $\overline\BF_q$ (in particular, $\pi_0(LF_w) = \overline F_w$). Recall from \cite[\S10]{Ivanov_DL_indrep} that we have maps $\kappa_w \colon LT \twoheadrightarrow X_\ast(T) \stackrel{\bar\kappa_w}{\twoheadrightarrow} X_\ast(T)_{\langle\sigma_w\rangle}$, where $\sigma = q^{-1}F$ is the automorphism of $X_\ast(T)$, and $\sigma_w = {\Ad}(w) \circ \sigma$ and that we have a natural map $\alpha_{w,b} \colon X_w(b) \to L\overline F_w/\ker\overline\kappa_w$ so that for any $\bar w \in L\overline F_w/\ker\overline\kappa_w$, $X_w(b)_{\bar w} = \alpha^{-1}(\bar w)$ is clopen (possibly empty) $G_b(k)$-stable subset of $X_w(b)$ and $\dot X_{\dot w}(b) \to X_w(b)_{\bar w}$ is a pro-\'etale $T_w(k)$-torsor for any lift $\dot w$ of $\bar w$.

For a root $\alpha \in \Phi$, let $s_\alpha \in W$ denote the corresponding reflection. 
% For an element $w\in \bfW$, let $F_w$ denote the fiber in $N_{\bfG}(T)$ over $w$ (thus $F_w$ is a trivial $T$-torsor; for $\alpha \in \Phi$, $F_{s_\alpha}$ is denoted by $M_\alpha$ in \cite[\S6.1]{BruhatT_72}). 
As in \cite[6.1.2(2)]{BruhatT_72} we have the $\breve k$-subscheme $M_{\alpha}^\circ \subseteq F_{s_\alpha}$ and an isomorphism
\[
m=m_\alpha \colon U_{-\alpha}^\ast \stackrel{\sim}{\longrightarrow} M_\alpha^\circ \quad u \mapsto m(u),
\]
where for $u \in U_{-\alpha}(\breve k)$, $m(u)$ is the unique element of $F_w(\breve k)$ such that $u \in U_\alpha(\breve k) m(u) U_\alpha(\breve k)$.

\begin{lemma}\label{lm:intersection_Uminus_UwU}
Let $w = s_1\dots s_n$ be as in the beginning of \S\ref{sec:pDL_embedding_into_big_cell_loop}. Let $\dot w \in F_w(\breve k)$. For $\tau \in T(\breve k)$ we have 
\begin{equation}\label{eq:intersection_Uminus_UwU}
\tau U^- \cap U \dot w U = \tau \cdot \left\{ \prod_{i=1}^n v_i \in \prod_{i=1}^n U_{-\alpha_i}^\ast \colon \prod_{i=1}^n m(v_i) = \tau^{-1}\dot w \right\}.
\end{equation}
\end{lemma}
\begin{proof}
Multiplying both sides by $\tau^{-1}$ and using that $\tau^{-1}$ normalizes $U$, we may assume that $\tau = 1$. For better readability, we write $U$ for $U(\breve k)$ in the proof. As $v_i \in U m(v_i) U$ by construction of $m(\cdot)$, and as $U \dot y_1 U \dot y_2 U = U\dot y_1\dot y_2U$ whenever $y_1,y_2 \in W$ and $\dot y_i \in F_{y_i}$ with $\ell(y_1) + \ell(y_2) = \ell(y_1y_2)$, the right side of \eqref{eq:intersection_Uminus_UwU} is contained in the left side. For the converse, if $x \in U^- \cap U \dot w U$, then by Lemma~\ref{lm:Lus2_2_loop}, $x = \prod_{i=1}^n v_i$ with $v_i \in U^\ast_{-\alpha_i}$ and if $\prod_{i=1}^n m(v_i) = \dot w'$ for some $\dot w' \in F_w$, the above argument shows that $x \in U^- \cap U \dot w' U$, so we must have $\dot w' = \dot w$.
\end{proof}

\emph{From now on assume that $w = s_1\dots s_n$ is a $\sigma$-Coxeter element and that $b \in T(\breve k)$.} Consider the closed sub-ind-scheme of $LT \times \prod_{i=1}^n LU^\ast_{-w_0(\alpha_i)}$,
\[
Z_{\dot w}(b) = \left\{ (\tau, (v_i)_{i=1}^n) \in LT \times \prod_{i=1}^n LU^\ast_{-w_0(\alpha_i)} \colon \tau \dot w \sigma(\tau)^{-1} = (b^{w_0})\cdot \prod_{i=1}^n m(v_i) \right\}.
\]
The map 
\[ 
Z_{\dot w}(b) \to b \cdot \prod_{i=1}^n LU_{-w_0(\alpha_i)}^\ast \stackrel{\text{Prop.\ref{prop:Lus2_6}}}{\cong} X_w(b)/U_b(k), \quad (\tau,v_1,\dots,v_n) \mapsto b\cdot \prod_{i=1}^n v_i
\]
realizes $Z_{\dot w}(b)$ as a pro-\'etale $T_w(k)$-torsor over a clopen subset of the target.

\begin{proposition}\label{prop:description_dotXwb_quot}
Let $w$ be a $\sigma$-Coxeter element and $b \in T(\breve k)$. Then $\dot X_{\dot w}(b)/U_b(k) \cong Z_{\dot w}(b)$ and there is a cartesian diagram
\[
\xymatrix{
\dot X_{\dot w}(b) \ar[r] \ar[d]^{T_w(k)} & Z_{\dot w}(b) \ar[d]^{T_w(k)} \\
X_w(b) \ar[r] & X_w(b)/U_b(k)
}
\]
\end{proposition}
With other words, $\dot X_{\dot w}(b)$ is $B_b(k) \times T_w(k)$-equivariantly isomorphic to the set of all $(\tau,u) \in LT \times LU$ for which $u^{-1}b\sigma(u) = b \cdot \prod_{i=1}^n v_i \in \prod_{i=1}^n LU_{-w_0(\alpha_i)}^\ast$ and $\tau \dot w \sigma(\tau)^{-1} = b^{w_0}\cdot  \prod_{i=1}^n m(v_i)$.

% $B_b(k) \times T_w(k)$-equivariant isomorphism
% \[
% \dot X_{\dot w}(b) \cong \left\{ (\tau,u) \in LT \times LU \colon u^{-1}b\sigma(u) = b \cdot \prod_{i=1}^n v_i \in \prod_{i=1}^n LU_{-w_0(\alpha_i)}^\ast \text{ and } \tau \dot w \sigma(\tau)^{-1} = b^{w_0}\cdot  \prod_{i=1}^n m(v_i) \right\} \]

\begin{proof}
As $\sigma(w_0)=w_0$, we may (using Lang's theorem) choose a lift $\dot w_0 \in N_{G}(T)(\breve k)$ with $\sigma(\dot w_0) = \dot w_0$. By Proposition \ref{prop:Coxeter_DLspace_factors_big_cell_loop}, $\dot X_{\dot w}(b) \hookrightarrow L(G/U)$ factors through the preimage $L(U\dot w_0B/U)\subseteq L(G/U)$ of $L(U\dot w_0 B/B)$. Now, $u\dot w_0 \tau LU \in L(U\dot w_0B/U) $ lies in $\dot X_{\dot w}(b)$ if and only if $(u\dot w_0 \tau)^{-1} b \sigma(u\dot w_0 \tau) \in L(U \dot w U)$, or equivalently, if and only if $(u^{-1}b\sigma(u))^{\dot w_0} \in L(U \tau \dot w \sigma(\tau)^{-1} U)$. As $(u^{-1}b\sigma(u))^{\dot w_0} \in b^{w_0} \cdot LU^-$, the last condition is equivalent to
\[
\left(u^{-1}b\sigma(u)\right)^{\dot w_0} \in L\left(b^{w_0}U^- \cap U \tau \dot w \sigma(\tau)^{-1} U\right) 
\]
which by Lemma \ref{lm:intersection_Uminus_UwU} is equivalent to
\[
\left(u^{-1}b\sigma(u)\right)^{\dot w_0} \in b^{w_0} \cdot \left\{\prod_i v_i \in \prod_{i=1}^n LU_{-\alpha_i}^\ast \colon \prod_{i=1}^R m(v_i) = (b^{w_0})^{-1} \tau \dot w \sigma(\tau)^{-1} \right\},
\]
Conjugating both sides by $\dot w_0$, and renaming the variable ${}^{\dot w_0} v_i \in LU_{-w_0(\alpha_i)}^\ast$ to $v_i$, we obtain the proposition.
\end{proof}

As a corollary we obtain a description of the map $\alpha_{w,b}$ from \cite[\S10]{Ivanov_DL_indrep} in this case. Denote by $\psi$ the map $\prod_{i=1}^n LU^\ast_{\alpha_i} \to F_w$, $(v_i)_{i=1}^n \mapsto \prod_{i=1}^n m(v_i)$.

\begin{lemma}\label{lm:surjectivity_sssc_case}
Let $w = s_1\dots s_n$ be $\sigma$-Coxeter. The image of the composed map
\[
\bar{\psi} \colon 
\prod_{i=1}^n LU_{-\alpha_i}^\ast \stackrel{L\psi}{\longrightarrow} LF_w \to L\overline F_w/\ker\overline\kappa_w, \quad  (v_i)_{i=1}^n \mapsto \prod_{i=1}^n m(v_i) 
\]
is equal to ${\rm im}(L\overline F_w^{\rm sc}/\ker\overline \kappa_w^{\rm sc} \to L\overline F_w/\ker\overline \kappa_w)$, where $(\cdot)^{\rm sc}$ denote the object $(\cdot)$ for the simply connected cover of the derived group of $G$.
\end{lemma}
\begin{proof}
We may assume that $G$ is semisimple and simply connected.
% The map $\pi \colon G^{\rm sc} \to G$ is an isomorphism when restricted to $\pi^{-1}(U)$. Thus, the commutative diagram
% \[
% \xymatrix{
% \prod_{i=1}^n LU_{-\alpha_i}^\ast \ar[r]^{\overline\psi^{\sc}} \ar@{=}[d] & L\overline F_w^{\rm sc}/\ker\overline \kappa_w^{\rm sc} \ar[d] \\ 
% \prod_{i=1}^n LU_{-\alpha_i}^\ast \ar[r]^{\overline\psi} & L\overline F_w/\ker\overline \kappa_w
% }
% \]
% reduces us to showing that $\overline\psi^{\rm sc}$ is surjective. 
% Therefore we may assume that $G$ is semisimple and simply connected. 
Then it suffices to show that $\prod_{i=1}^n LM_{\alpha_i}^\circ \to LF_w \to LF_w/\ker\kappa_w$ is surjective. 
For any coroot $\alpha^\vee \in \Phi^\vee$, let $T_{\alpha^\vee} \subseteq T$ denote its image. 
For each $i$ choose some $\dot s_i \in M_{\alpha_i}^\circ(\breve k)$, so that $M_{\alpha_i}^\circ = T_{\alpha_i^\vee}\dot s_i$. Then $\dot w := \dot s_1 \dot s_2 \dots \dot s_n \in F_w$. 
% Then for each $1\leq i \leq n$, the map $m_{\alpha_i} \colon U^\ast_{\alpha_i} \to M_{\alpha_i}^\circ$ is an isomorphism, hence 
% shows that $\im(\overline\psi) \subseteq \im\left(L\overline F_w^{\rm sc}/\ker\overline \kappa_w^{\rm sc} \to L\overline F_w/\ker\overline \kappa_w\right)$ 
For $0\leq i<n$ let $\theta_i = s_1 \dots s_i(\alpha_{i+1})$ (we have $\{\theta_i\}_{i=0}^{n-1} = \Phi \cap {}^{w^{-1}}\Phi^-$, cf. \cite[p.~158]{Bourbaki_68}). Trivializing all torsors reduces us to showing that the natural map
% We are reduced to show the same claims as in the proposition for the morphism
\[
LT_{\theta_0^\vee} \times LT_{\theta_1^\vee} \times \dots \times LT_{ \theta_{n-1}^\vee}  \to LT/\ker\kappa_w \cong X_\ast(T)_{\langle \sigma_w \rangle}
\]
is surjective. It suffices to show that the composition
\[
X_\ast(T_{\theta_0^\vee}) \times X_\ast(T_{\theta_1^\vee}) \times \dots \times X_\ast(T_{ \theta_{n-1}^\vee})  \stackrel{\phi}{\to} X_\ast(T) \twoheadrightarrow X_\ast(T)_{\langle \sigma_w \rangle}
\]
is surjective As $G^{\rm sc} = G$, we have $X_\ast(T) = \BZ\Phi^\vee$ and the claim follows.
\end{proof}

Note that each $X_w(b)_{\bar w}$ is $G_b(k)$-stable, $X_w(b)/U_b(k)$ is the disjoint union of the clopen pieces $X_w(b)_{\bar w}/U_b(k)$.  Let $\bar\psi$ be as in Lemma \ref{lm:surjectivity_sssc_case}.

\begin{corollary}
Let $w$ be a $\sigma$-Coxeter element and $b \in T(\breve k)$.  Let $\bar w \in L\overline F_w/\ker \bar\kappa_w$. Under the isomorphism from Proposition \ref{prop:Lus2_6}, $X_w(b)_{\bar w}/U_b(k)$ corresponds to the subset of $\prod_{i=1}^n LU_{-w_0(\alpha_i)}^\ast$ cut out by the equation $\bar w = \bar\kappa_w(b) \bar\psi(\prod_i m(v_i)) \in L\overline F_w/\ker \bar\kappa_w$. In particular, \[ {\rm im}(\alpha_{w,b}) = \bar\kappa_w(b) \cdot {\rm im}(L\overline F_w^{\rm sc}/\ker\overline \kappa_w^{\rm sc} \to L\overline F_w/\ker\overline \kappa_w).\]
% $X_w(b)_{\underline w} \neq \varnothing$ if and only if $\bar w$ and $\kappa_w(b)$ differ by an element in ${\rm im}(X_\ast(T^{\rm sc})_{\langle \sigma_w\rangle} \to X_\ast(T)_{\langle \sigma_w\rangle})$.
\end{corollary}
\begin{proof}
Passing to $LF_w / \ker\kappa_w$, the equation in the definition of $Z_{\dot w}(b)$ becomes $\bar w = \bar\kappa_w(b) \bar\psi(\prod_i m(v_i))$. By Proposition \ref{prop:description_dotXwb_quot} all claims follow from this and Lemma \ref{lm:surjectivity_sssc_case}.
\end{proof}

\subsection{Quotients by the unipotent radical of a parabolic}\label{sec:quot_mod_unip_rad_parabolic}

Let $I \subseteq S/\langle \sigma\rangle$ be a subset, let $S_I \subseteq S$ be its preimage in $S$, $W_I \subseteq W$ be the corresponding parabolic subgroup; $P_I$ the unique parabolic subgroup of $G$ containing $B$ and $U_{-\alpha}$ for all $\alpha \in I$; $U_I$ the unipotent radical of $P_I$; $G_I$ the unique Levi subgroup of $P_I$ containing $T$. Then $P_I, U_I, G_I$ are $k$-rational. Let $G_I' = P_I/U_I$ and denote the natural projection by $\pi \colon P_I \to G'_I$; the composition $G_I \to P_I \stackrel{\pi}{\to} G_I'$ is an isomorphism. Note that $\pi(B)$ is a Borel subgroup of $G_I'$. Let $\Pi_I \subseteq \Pi$ be the set of simple roots $\alpha$ corresponding to elements of $S_I$. Put $\Phi_I = \BZ \Pi_I \cap \Phi$ and $\Phi_I^{\pm} = \Phi_I \cap \Phi^{\pm}$. 

Write $n = |S/\langle\sigma\rangle|$ and assume that $|I| = n-1$. Let $w = s_1\dots s_n \in W$ be a $\sigma$-Coxeter element. Then there is a unique index $j$, such that ${}^{w_0}s_j \not\in S_I$. Let $w_0^I$ denotes the longest element of $W_I$. Then ${}^{w_0^I w_0}s_i \in S_I$ for all $i\neq j$. Thus \[w_I = {}^{w_0^I w_0}(s_1\dots s_{j-1} s_{j+1} \dots s_n)\] is a $\sigma$-Coxeter element of $W_I$. Denote by $X_{w_I}^{G'_I}(b)$ the corresponding $p$-adic Deligne--Lusztig space for the group $G'_I$.

\begin{lemma}\mbox{}\label{lm:describe_quotient_mod_parabolic_radical}
\begin{itemize}
\item[(i)] We have a well-defined map $G/B \supseteq B w_0 B/B \to G_I'/\pi(B)$ defined by sending $b_1 \dot w_0 B$ to $\pi(b_1) \dot w_0^I \pi(B)$. \item[(ii)] Let $b \in T(\breve k)$. The restriction of the map from (i) to $X_w(b)$ defines a $P_{I,b}(k)$-equivariant map $X_w(b) \to X_{w_I}^{G_I'}(b)$, where $P_{I,b}(k)$ acts on $X_{w_I}^{G_I'}(b)$ via its quotient in $G_I(k)$. This induces a $G'_b(k)$-equivariant map \[ \pi'' \colon X_w(b)/U_{I,b}(k) \to X_{w_I}^{G_I'}(b).\]
\item[(iii)] There exists a map $\alpha \colon X_w(b) \to LU_{-w_0(\alpha_j)}^\ast$, such that $\pi'' \times \alpha \colon X_w(b)/U_{I,b}(k) \to X_{w_I}^{G_I'}(b) \times LU_{-w_0(\alpha_j)}^\ast$ is an isomorphism.
\end{itemize}
\end{lemma}
\begin{proof}
(i) is an immediate computation. Then (ii) follows from Proposition \ref{prop:Lus2_6}. The proof of (iii) is the same as that of the first claim of \cite[Corollary 2.10]{Lusztig_76_Inv}.
\end{proof}

We explicate the isomorphism of Lemma \ref{lm:describe_quotient_mod_parabolic_radical}(iii). By Proposition \ref{prop:Lus2_6}, $X_w(b)$ identifies with the set of all $u \in LU$ satisfying $u^{-1}\dot w F(u) = b^{\dot w_0} \prod_{i=1}^n u_i$ with $u_i \in LU_{-w_0(\alpha_i)}^\ast$. There is a unique writing $u=u' u''$ with $u' \in U_I$ and $u'' \in U \cap G_I$. Then $u^{-1}bF(u) = u^{\prime\prime -1} bg F(u^{\prime\prime})$ where $u' \mapsto g = b^{-1}u^{\prime -1} bF(u') \colon LU \to LU$ defines a pro-\'etale $U_{I,b}(k)$-torsor. Thus $X_w(b)/U_{I,b}(k)$ identifies with the set of all $(u'',g)$ such that \begin{equation}\label{eq:quot_parabolic_aux1} 
u^{\prime\prime-1}bgF(u^{\prime\prime}) = \prod_{i=1}^n u_i \quad \text{ with $u_i \in LU_{-w_0(\alpha_i)}^\ast$.}
\end{equation}
Applying $\pi$ and noting that it induces an isomorphism $G_I \to G_I'$, we see that all $u_i$ for $i\neq j$ are uniquely determined by $u''$ and that $u'' \in X_{w_I}^{G_I'}(b)$ (under the identification of Proposition \ref{prop:Lus2_6}). It follows then that those $g \in U_I$ for which \eqref{eq:quot_parabolic_aux1} holds, are in bijection with all $u_j \in LU_{-w_0(\alpha_j)}^\ast$, so that $(u'',g) \mapsto (u'',u_j)$ is an isomorphism as claimed in Lemma \ref{lm:describe_quotient_mod_parabolic_radical}(iii).

Now we describe the quotient $\dot X_{\dot w}(b)/U_{I,b}(k)$. Consider 
\begin{align*}
Z_{I,\dot w}(b) = \{ (\tau,&u'',u_j) \in LT \times L(U \cap G_I) \times LU_{-w_0(\alpha_j)} \colon \\ &\tau^{-1} \dot w F(\tau) = b^{\dot w_0} \prod_{i=1}^n m(u_i), \,\, u^{\prime\prime -1}b F(u^{\prime\prime}) = \prod_{\substack{i=1 \\ i\neq j}}^{n} u_i\in \prod_{i\neq j} LU_i^\ast \},
\end{align*}
where $u_i$ ($i\neq j$) are determined by $u''$ as above, and the last equality takes place in $G_I$. Then $Z_{I,\dot w}(b)$ is a pro-\'etale $T_w(k)$-torsor over $X_{w_I}^{G_I'}(b) \times LU_{-w_0(\alpha_j)}^\ast$.

\begin{lemma}\label{lm:quot_by_UI_of_covers}
There is an $T(k)$-equivariant isomorphism $\dot X_{\dot w}(b)/U_{I,b}(k) \cong Z_{I,\dot w}(b)$, and $Z_{I, \dot w}(b)$ fits into the diagram with cartesian squares,
\[
\xymatrix{
\dot X_{\dot w}(b) \ar[r] \ar[d] &Z_{I,\dot w}(b) \ar[d] \ar[r] & Z_{\dot w}(b) \ar[d] \\
X_w(b) \ar[r] &X_w(b)/U_{I,b}(k) \ar[r] & X_w(b)/U_b(k)
}
\]
where the outer square is as in Proposition \ref{prop:description_dotXwb_quot}, and all vertical maps are pro-\'etale $T_w(k)$-torsors.
\end{lemma}
\begin{proof} 
It is clear from the definitions, that the right square is cartesian. This implies that there is a natural map $\dot X_{\dot w}(b) \to Z_{I,\dot w}(b)$. As the outer square is cartesian by Proposition \ref{prop:description_dotXwb_quot}, the left square has to be cartesian too. This implies the first claim of the lemma, and the $T_w(k)$-equivariance is clear.
\end{proof}

\section{Quotients on the integral/finite level}\label{sec:quots_finite_level}

We investigate the analogues of the results from \S\ref{sec:quotient_Cox_var} for deep level Deligne--Lusztig varieties. We assume that $b = 1$ (only possibility with $b \in T(\breve k)$ and basic). Let $\CG$ be a hyperspecial model of $G$ over $\CO_k$. Let $w = s_1 \dots s_n \in W$ be $\sigma$-Coxeter element and let $\dot w \in \CG(\CO_{\breve k})$ be a lift of $w$. Let $\BG = \BG_r (= L_r^+\CG)$ with $r\leq \infty$ be as in the introduction. We have the Deligne--Lusztig variety $X_{w} = X_{w,r} \subseteq \BG/\BB$ and the $\BT_r^F$-torsor $\dot X_{\dot w} = \dot X_{\dot w,r} \subseteq \BG/\BU$ over it (as in \cite[Definition 4.1.1]{DI}). Note that by \cite[Lemma 4.1.2]{DI} there is an $\BG^F \times \BT^F$-equivariant isomorphism 
\begin{equation}\label{eq:two_setups} 
\dot X_{\dot w,r} \cong X = \{ g\in \BG \colon g^{-1}F(g) \in \overline\BU \cap F\BU \}. 
\end{equation}
Let $\pi \colon \BG \to \BG_1$ denote the natural projection map. 

\begin{lemma}\label{lm:contained_in_big_cell}
We have $X_{w,r} \subseteq \BB \dot w_0 \BB$.
\end{lemma}

\begin{proof}
$\pi(X_{w,r}) \subseteq X_{w,1}$ and $X_{w,1} \subseteq \BB_1 \dot w_0 \BB_1$ by \cite[Cor.~2.5]{Lusztig_76_Inv}. Thus $X_{w,r} \subseteq \pi^{-1}(\BB_1 \dot w_0 \BB_1) = \BB \dot w_0 \BB$, the last equality being true since $w_0$ is the longest element of $W$.
\end{proof}

\begin{lemma}\label{lm:Bruhat_properties} Let $v,v' \in W$ with $\ell(vv') = \ell(v)+\ell(v')$. Then $\BB \dot v \BB \dot v' \BB = \BB \dot v \dot v' \BB$.
\end{lemma}
\begin{proof}
Let $\alpha \in \Phi^+$ be the simple root corresponding to $s$. For part (i), we are reduced by induction to the case that $v' = s \in S$ and $\ell(vs) = \ell(v) + 1$. Then we have $v(\alpha) \in \Phi^+$.
%(cf.~\cite[p.~158]{Bourbaki_68}). 
It follows that 
$
\BB \dot v \BB \dot s \BB = \BB \dot v \BU_{\alpha} \dot s \BB = \BB \dot v \dot s \BB,
$
where in the first step we move all $\BU_{\beta}$ with $\beta \neq \alpha$ into the right $\BB$, and in the second step we move $\BU_\alpha$ into the left $\BB$, using $v(\alpha) \in \Phi^+$.
\end{proof}

For $\alpha \in \Phi$, we write $\BU_\alpha^\ast$ for the open complement of $\BU^1_\alpha$ in $\BU_\alpha$. We have the following analogue of \cite[Proposition 2.2]{Lusztig_76_Fin}.

\begin{lemma}\label{lm:Uminus_intesect_CoxCell}
We have
\[
\BU^- \cap \BB \dot w \BB = \{ v_1 \dots v_n \colon v_i \in (\BU_{-\alpha_i})^\ast \},
\]
\end{lemma}
\begin{proof}
Let $v_i \in \BU_{-\alpha_i}^{\ast}$. Then we claim that $v_i \in \BB s_i \BB$. Indeed, let $G_{\alpha_i} \subseteq G$ be the subgroup generated by $U_{\alpha_i}, U_{-\alpha_i}, T$, and let $\BG_{\alpha_i} \subseteq \BG$ be the corresponding subgroup. Then $v_i \in \BG_{\alpha_i}$, and it suffices to show the claim for $\BG_{\alpha_i}$ instead of $\BG$, which reduces to an explicit computation in $L^+_r\SL_2$, which uses the assumption that $v_i \not\in \BU_{\alpha_i}^1$. Using the claim, 
\[
v = \prod_i v_i \in \BB \dot s_1 \BB \dot s_2 \dots \dot s_{n-1} \BB s_n \BB = \BB \dot w \BB,
\]
by Lemma \ref{lm:Bruhat_properties}. This shows one inclusion. For the converse, assume that $x \in \BU^- \,\cap \, \BB \dot w \BB$. Then just as in \cite[Proof of (2.2)]{Lusztig_76_Inv}, we may write $x = u_1 \dot s_1 u_2 \dot s_2 \dots u_n \dot s_n b$ with $u_i \in \BU_{\alpha_i}$ and $b \in \BB$. Suppose that $u_1 \in \BU_{\alpha_1}^1$. Then consider the image $\bar x \in \BU_{\alpha_1}^- \,\cap \, \BB_1 \dot w \BB_1$ of $x$ under $\pi \colon \BG \to \BG_1$, which is again of the form $\bar x = \bar u_1 \dot s_1 \bar u_2 \dot s_2 \dots \bar u_n \dot s_n \bar b$, with $\bar u_1 \not= 1$ in $(\BU_{\alpha_1})_1$. As in \emph{loc.~cit.} this gives a contradiction. Thus we must have $u_1 \in \BU_{\alpha_1}^\ast$. Similar as in \emph{loc.~cit.}, a computation in the group $\BG_{\alpha_i}$ shows that there exist $u_1' \in \BU_{\alpha_1}$, $v_1 \in (\BU_{\alpha_1}^-)^\ast$, $t \in \BT$ with $u_1 = v_1 u_1' t \dot s_1$. Using this, we see that
\[
v_1^{-1} x = u_1' t \dot s_1^2 u_2 \dot s_2 \dots u_n \dot s_n b  = u_1' t u_2 \dot s_2 \dots u_n \dot s_n b \in \BB \dot s_2 \BB \dots \dot s_n \BB = \BB \dot s_2 \dots \dot s_n \BB.
\]
Thus $v_1^{-1} x \in \BU^- \cap \BB \dot s_2 \dots \dot s_n \BB$, and we are done by induction. \qedhere
\end{proof}

Note that Lemma \ref{lm:Uminus_intesect_CoxCell} does not follow directly from \cite[Prop.~2.2]{Lusztig_76_Inv} as both sides of the equation are not equal to the preimages of their images under $\pi \colon \BG \to \BG_1$. Now we can generalize \cite[Theorem 2.6]{Lusztig_76_Inv}.

\begin{proposition}\label{prop:descr_of_Xwr_and_unip_quot} We have the following isomorphisms:
\begin{itemize}
\item[(i)] 
$\{  u \in \BU \colon u^{-1}F(u) = u_1 \dots u_n \colon u_i \in \BU_{-w_0(\alpha_i)}^\ast\, \forall\, 1\leq i\leq n \} \stackrel{\sim}{\to} X_{w,r}, \quad u \mapsto u\dot w_0 \BB$
\item[(ii)] $\BU_{-w_0(\alpha_1)}^\ast \times \dots \times \BU_{-w_0(\alpha_n)}^\ast \stackrel{\sim}{\to} X_{w,r}/\BU^F$.
\end{itemize}
\end{proposition}
\begin{proof}
This follows from Lemmas \ref{lm:contained_in_big_cell} and \ref{lm:Uminus_intesect_CoxCell}.
\end{proof}

%\begin{proof}
%(i): Let $g\BB \in X_{w,r}$. By Lemma \ref{lm:contained_in_big_cell} we have $g \in \BB \dot w_0 \BB$. Thus by Lemma \ref{lm:deco_in_preim_of_Bruhat_cell}, there is a uniquely determined $u \in \BU$ such that $g = u\dot w_0$. For $u \in \BU$, $u\dot w_0 \BB$ lies in $X_{w,r}$ if and only if $\dot w_0 u^{-1}F(u) F(\dot w_0) \in \BB \dot w \BB$, or equivalently, $\dot w_0 u^{-1}F(u) \dot w_0 \in \overline \BU \cap \BB \dot w \BB$. By Lemma~\ref{lm:Uminus_intesect_CoxCell} this is equivalent to $\dot w_0 u^{-1}F(u) \dot w_0 = v_1 \dots v_R$ with $v_i \in \BU_{\alpha_i}^\ast$, or, equivalently, to $u^{-1}F(u) = u_1 \dots u_R$ with $u_i \in \BU_{-w_0(\alpha_i)}$. This proves (i) and (ii) follows from (i).
%\end{proof}

Let $\alpha \in \Phi$. The map $m = m_\alpha \colon U^\ast_{-\alpha} \to M_\alpha^\circ$ from \S\ref{quotient_description_covers_loop} induces an isomorphism
\[
m_\alpha \colon \BU_{-\alpha}^\ast \stackrel{\sim}{\to}\BM_\alpha^\circ,
\]
where $\BM_{\alpha}^\circ$ is the preimage of $w$ in $\BG$. Then just as in \S\ref{quotient_description_covers_loop} we have the scheme with $\BB^F \times \BT_w^F$-action
\[Z_{\dot w,r} = \{(\tau,(v_i)_{i=1}^n) \in \BT \times \prod_{i=1}^n \BU^\ast_{-w_0(\alpha_i)} \colon \tau \dot w \sigma(\tau)^{-1} = \prod_{i=1}^n m(v_i) \} \]
equipped with the $\BB^F$-equivariant map $(\tau,(v_i)_i) \mapsto (v_i)_i \colon Z_{\dot w,r} \to X_{w,r}/\BU^F$. With notation as in \S\ref{sec:quot_mod_unip_rad_parabolic} we also have the scheme 
\begin{align*}
Z_{I,\dot w,r}(b) = \{ (\tau,&u'',u_j) \in \BT \times (\BU \cap \BG_I) \times \BU_{-w_0(\alpha_j)} \colon \\ &\tau^{-1} \dot w F(\tau) = b^{\dot w_0} \prod_{i=1}^n m(u_i), \,\, u^{\prime\prime -1}b F(u^{\prime\prime}) = \prod_{\substack{i=1 \\ i\neq j}}^{n} u_i\in \prod_{i\neq j} \BU_i^\ast \},
\end{align*}
and just as in \S\ref{sec:quot_mod_unip_rad_parabolic} we have the following consequence of Proposition \ref{prop:descr_of_Xwr_and_unip_quot}.

\begin{corollary}\mbox{}
\begin{itemize}
\item[(1)] There is $\BB^F$-equivariant isomorphism
\[
\dot X_{\dot w,r}/\BU^F \stackrel{\sim}{\to} Z_{\dot w,r}
\]
and $X_{\dot w,r} = X_{w,r} \times_{X_{w,r}/\BU^F}  Z_{\dot w,r}$.
\item[(2)] There is $\BT_w^F$-equivariant isomorphism
\[
\dot X_{\dot w,r}/\BU_I^F \stackrel{\sim}{\to} Z_{I,\dot w,r}
\]
and $\dot X_{\dot w,r} = X_{w,r} \times_{X_{w,r}/\BU_I^F}  Z_{I,\dot w,r}$.
\end{itemize}
\end{corollary}

\subsection{Extension of action}
Let the notation be as in \S\ref{sec:quot_mod_unip_rad_parabolic}. Let $\mu \in X_\ast(T)$ be be such that $\langle \alpha_j, \mu\rangle \neq 0$ (such $\mu$ exists). Put $\mu' = \langle \alpha_j,\mu\rangle \cdot {}^{s_1\dots s_{j-1}}(\alpha_j^\vee) \in X_\ast(T)$. As $w\sigma - 1 \colon X_\ast(T^{\rm sc})_\BQ \to X_\ast(T^{\rm sc})_\BQ$ is bijective, we may (after replacing $\mu$ by an integral multiple, if necessary) assume that there is some $\lambda \in X_\ast(T^{\rm sc})$ with $\mu' = w\sigma(\lambda) - \lambda$. 

\begin{lemma}\label{lm:action_of_Gm}
With notation as above, there is an action of $\BG_m$ on $Z_{I,\dot w,r}$ given by the formula
\[
x \colon (\tau,u'',u_j) \mapsto (\lambda(x)\tau,u'',{}^{\mu(x)}u_j)
\]
for any $x \in \BG_m$. Moreover, this $\BG_m$-action commutes with the action of $\BT_w^F$ and $Z_{I,\dot w,r}^{\BG_m} = \varnothing$.
\end{lemma}

\begin{proof}
Let $(\tau,u'',u_j) \in Z_{I,\dot w,r}$ and let $u_i \in \BU_{-w_0(\alpha_i)}$ (for $i\neq j$) be determined by $u''$ as above. The first sentence of the lemma follows from the computation
\begin{align*}
(\lambda(x)\tau)^{-1} \dot w F(\lambda(x)\tau) &= \mu'(x) \tau^{-1}\dot w F(\tau) \\ &= \mu'(x) \prod_{i=1}^n m(u_i) \\
&= \prod_{i=1}^{j-1} m(u_i) (\alpha_j^\vee)^{\langle\alpha_j,\mu\rangle}(x)m(u_j) \prod_{i=j+1}^n m(u_i) \\
&= \prod_{i=1}^{j-1} m(u_i) m({}^{\mu(x)}u_j) \prod_{i=j+1}^n m(u_i)
\end{align*}
where the last equality follows from a property of the map $m(\cdot)$, which can be checked by an explicit calculation after reducing to $\SL_2$. The last sentence of the lemma is immediate.
\end{proof}

\begin{proof}[Proof of Theorem \ref{thm:cusp}]
By \cite[Corollary 1.0.1]{DI} (or Theorem \ref{thm:main}) and \eqref{eq:two_setups} we know that $H_c^\ast(X)[\chi] = H_c^\ast(\dot X_{\dot w,r})[\chi]$ is up to sign an irreducible $\BG^F$-representation. Thus, exploiting a theorem of Bushnell \cite[Theorem 1]{Bushnell_90} as in the proof of \cite[Theorem 6.1, Proposition 6.2]{CI_loopGLn}, it suffices to show that for any maximal proper subset $I \subseteq S/\langle \sigma \rangle$, the virtual $\overline\BQ_\ell$-vector space $H_c^\ast(X_{\dot w,r}^\CG/\BU_I^F,\overline\BQ_\ell)_\theta$ vanishes. But this follows directly from Lemma \ref{lm:action_of_Gm}, as 
\[ 
H_c^\ast(X_{\dot w,r}^\CG/\BU_I^F,\overline\BQ_\ell)_\theta = H_c^\ast(Z_{I,\dot w,r},\overline\BQ_\ell)_\theta = H_c^\ast(Z_{I,\dot w,r}^{\BG_m},\overline\BQ_\ell)_\theta = 0,
\] 
where the last equality follows from \cite[10.15]{DigneM_91}.
\end{proof}

\bibliography{bib_ADLV}{}
\bibliographystyle{amsalpha}

\end{document}